\newcommand{\la}{\langle}
\newcommand{\ra}{\rangle}
\newcommand{\Z}{\mathbb{Z}}
\newcommand{\C}{\mathbb{C}}
\newcommand{\R}{\mathbb{R}}
\newcommand{\Q}{\mathbb{Q}}
\newcommand{\VOA}{\mathrm{VOA}}
\newcommand{\vir}{\mathrm{Vir}}
\newcommand{\om}{\omega}
\newcommand{\be}{\beta}
\newcommand{\al}{\alpha}
\newcommand{\Com}{\mathrm{Com}}
\newcommand{\w}{\omega}
\theoremstyle{plain}
\newtheorem{thm}{Theorem}[section]
\newtheorem{prop}[thm]{Proposition}
\newtheorem{lem}[thm]{Lemma}
\theoremstyle{definition}
\newtheorem{df}[thm]{Definition}
\newtheorem{nota}[thm]{Notation}
\newtheorem{rem}[thm]{Remark}
\newcommand{\G}{\mathcal{G}}
\newcommand{\tensor}{\otimes}
\title[Majorana representation of $3^2{:}2$ ]{On Majorana representations of the group $3^2{:}2$ of $3C$-pure type and the corresponding vertex operator algebras}
 \author{Hsian-Yang Chen}
  \address[H.Y.  Chen]{ Institute of Mathematics, Academia Sinica, Taipei  10617, Taiwan}
\email{hychen@math.sinica.edu.tw}
 \author{Ching Hung Lam}
 \address[C.H. Lam]{ Institute of Mathematics, Academia Sinica, Taipei  10617, Taiwan
 and National Center for Theoretical Sciences, Taiwan}
\email{chlam@math.sinica.edu.tw}
\thanks{Partially supported by NSC grant
  100-2628-M-001005-MY4 }
\subjclass[2010]{Primary 17B69;
Secondary 20B25 }
\begin{document}

\begin{abstract}
In this article, we study Griess algebras and vertex operator subalgebras  generated by Ising vectors in a moonshine
type VOA such that the subgroup generated by the corresponding Miyamoto involutions has the shape $3^2{:}2$ and any two
Ising vectors generate a 3C subVOA $U_{3C}$. We show that such a Griess algebra is uniquely determined, up to isomorphisms.
The structure of the corresponding vertex operator algebra is also discussed. In addition, we give a construction  of  such
a VOA inside  the lattice VOA $V_{E_8^3}$, which  gives an explicit example for Majorana representations of the group $3^2{:}2$ of $3C$-pure type.
\end{abstract}

\maketitle
%\linenumbers
%\tableofcontents

\section{Introduction}

A vertex operator algebra (VOA) $V=\oplus_{n=0}^{\infty} V_{n}$ is said to be of \emph{moonshine type} if $\dim(V_{0})=1$ and
$V_{1}=0.$ In this case, the weight 2 subspace $V_{2}$ has a commutative non-associative product defined by $a\cdot
b=a_{1}b$ for $a,b\in V_{2}$ and it has a symmetric invariant bilinear form $\langle\cdot,\cdot\rangle$ given by $\langle
a,b\rangle\mathbbm{1}=a_{3}b$ for $a,b\in V_{2}$ \cite{FLM}. The algebra
$(V_{2},\cdot,\langle\cdot,\cdot\rangle)$ is often called the \emph{Griess algebra} of $V$. An element $e\in V_{2}$  is called an
\emph{Ising vector} if  $e\cdot e=2e$ and the sub-VOA generated by $e$ is isomorphic to the simple Virasoro VOA
$L(\frac{1}{2},0)$ of central charge $\frac{1}{2}$. In \cite{miy}, the basic properties of Ising vectors have been studied. Miyamoto
also gave a simple method to construct involutive automorphisms of a VOA $V$ from Ising vectors. These automorphisms are
often called Miyamoto involutions. When $V$ is the famous Moonshine VOA $V^{\natural}$, Miyamoto \cite{M} showed that
there is a $1-1$ correspondence between the $2A$ involutions of the Monster group and Ising vectors in $V^{\natural}$ (see also
\cite{ho}). This correspondence is very useful for  studying some mysterious phenomena of the Monster group and many problems
about $2A$-involutions in the Monster group may also be translated into questions about Ising vectors. For example, the McKay's
observation on the affine $E_{8}$-diagram has been studied in \cite{LYY} using Miyamoto involutions and certain VOA generated
by $2$ Ising vectors were constructed. Nine  different VOA were constructed in \cite{LYY} and they are denoted by
$U_{1A},U_{2A},U_{2B},U_{3A},U_{3C},U_{4A},U_{4B},U_{5A}$, and $U_{6A}$ because of their connection to the 6-transposition
property of the Monster group (cf. \cite[Introduction]{LYY}), where  $1A,2A,\dots,6A$ are the labels for certain conjugacy classes
of the Monster as denoted in \cite{ATLAS}. In \cite{Sa}, Griess algebras generated by 2 Ising vectors contained in a moonshine type
VOA over $\R$ with a positive definite invariant form are classified. There are also $9$ possible cases, and they correspond exactly to
the Griess algebra $\mathcal{G}U_{nX}$ of the $9$ VOA $U_{nX}$, for $nX\in\{1A,2A,2B,3A,3C,4A,4B,5A,6A\}$. Therefore, there
is again a correspondence between the dihedral subgroups generated by two $2A$-involutions, up to conjugacy and the Griess
subalgebras generated by 2 Ising vectors in $V^{\natural}$, up to isomorphism.

Motivated by the work of Sakuma \cite{Sa}, Ivanov
axiomatized the properties of Ising vectors and introduced the notion of Majorana representations for finite groups in his book \cite{Iva}. Ivanov and his research group also initiated a program on classifying the Majorana representations for various finite groups \cite{Iva2,Iva3,IS,IPSS}.  In particular,  the famous $196884$-dimensional Monster Griess algebra constructed by Griess\,\cite{G} is a Majorana representation of the Monster simple group. In fact, most known examples of Majorana representations are constructed as certain subalgebras of this Monster Griess algebra.

In this article, we study the Griess algebras and vertex operator subalgebras  generated by Ising vectors in a moonshine type VOA
such that  the subgroup generated by the corresponding Miyamoto involutions has the shape $3^2{:}2$ and any two Ising vectors
generate a 3C subVOA $U_{3C}$.   We show that such a Griess algebra is uniquely determined, up to isomorphisms. The structure
of the corresponding vertex operator algebra is also discussed. In particular, we determine the central charge. We also give an
explicit construction of such a VOA in a lattice VOA. Therefore,  we obtain an example for a Majorana representation of the group
$3^2{:}2$ of $3C$-pure type. We shall note that the Monster simple group does not have a subgroup of the shape $3^2{:}2$ such
that all order 3 elements belong to the conjugacy class $3C$. Therefore, the VOA that we constructed cannot be embedded into
the Moonshine VOA.

The structure of the Griess algebra can actually be  determined easily by the structure of the $3C$-algebra and the action of the
group $3^2{:}2$. It is also not very difficult to determine the central charge. To construct the VOA explicitly, we combine the
construction of the so-called dihedral subVOA from \cite{LYY} and the construction of $EE_8$-pairs from \cite{GL}. In fact, it is
quite straightforward to find Ising vectors satisfying our hypotheses. The main difficulty is to show the subVOA generated by these
Ising vectors has zero weight 1 subspace.

The organization of this article is as follows. In Section 2, we recall some basic definitions and notations. We also review the
structure of the so-called $3C$-algebra from \cite{LYY,LYY2}. In Section 3, we study the Griess algebra generated 9 Ising vectors
such that the subgroup generated by the corresponding Miyamoto involutions has the shape $3^2:2$ and  any two Ising vectors
generate a 3C subVOA $U_{3C}$. We show that such a Griess algebra is uniquely determined, up to isomorphisms.  We also study
the subVOA  generated by these Ising vectors. We show that such a VOA  has central charge $4$ and it has a full subVOA
isomorphic to $L(\frac{1}2,0)\otimes L(\frac{21}{22}, 0) \otimes L(\frac{28}{11},0).$  Some highest weight vectors are also
determined. In Section 4, we give an explicit construction of a VOA $W$ in the lattice $V_{E_8^3}$ satisfying our hypotheses. In
Section 5, we show that the VOA $W$ is isomorphic to the commutant subVOA $\Com_{V_{E_8^3}}( L_{\hat{sl}_9(\C)}(3,0))$ using
the theory of parafermion VOA. The decomposition of $W$ as a sum of irreducible modules of the parafermion VOA $K(sl_3(\C),9)$ is
also obtained.

\medskip

\section{Preliminary}
First  we will recall some definitions and review several basic facts.

\begin{df}
\label{A-bilinear-form}
Let $V$ be a VOA.
A bilinear $\langle\!\langle\cdot,\cdot\rangle\!\rangle$
form on $V$ is said to be \emph{invariant (or contragredient,} see
\cite{FHL}) if
\begin{equation}
\langle\!\langle Y(a,z)u,v\rangle\!\rangle=\langle\!\langle u,Y(e^{zL(1)}(-z^{-2})^{L(0)}a,z^{-1})v\rangle\!\rangle\label{eq:invariant}
\end{equation}
for any $a$, $u$, $v$ $\in$ $V$.
\end{df}

\begin{df}
Let $V$ be a VOA over $\C$. A \emph{real form} of $V$ is a subVOA  $V_\R$ (with the same vacuum and Virasoro element) of $V$ over $\R$ such that $V=V_\R\otimes \C$.
A real form $V_\R$ is said to be \emph{a positive definite real form} if the invariant form $\langle\!\langle\cdot,\cdot\rangle\!\rangle$ restricted on $V_\R$ is real valued and positive definite.
\end{df}

\begin{df} \label{vvc}
Let $V$ be a VOA. An element $v\in V_2$ is called a \emph{simple Virasoro vector} of central charge $c$
 if the subVOA $\mathrm{Vir}(v)$ generated by $e$ is isomorphic to the simple Virasoro VOA $L(c,0)$ of central charge $c$.
\end{df}

\begin{df} \label{Ising vector}
A simple Virasoro vector of central charge $1/2$ is an \emph{Ising vector}.
\end{df}

\begin{rem} \label{3irr module} It is well known the VOA $L(\frac{1}{2},0)$ is rational
and it has exactly 3 irreducible modules $L(\frac{1}{2},0)$, $L(\frac{1}{2},\frac{1}{2})$,
and $L(\frac{1}{2},\frac{1}{16})$ (cf. \cite{dmz,miy}).
\end{rem}

\begin{rem}\label{Veh}
Let $V$ be a VOA and let $e\in V$ be an Ising vector.
Then we have the decomposition
\[
V=V_{e}(0)\oplus V_{e}(\frac{1}{2})\oplus V_{e}(\frac{1}{16}),
\]
where $V_{e}(h)$ denotes the sum of all irreducible $\mathrm{Vir}(e)$-submodules of
$V$ isomorphic to $L(\frac{1}{2},h)$ for $h\in\{0,\frac{1}{2},\frac{1}{16}\}$.
\end{rem}

\begin{thm}[\cite{miy}]\label{taue}
The  linear map $\tau_{e}:\, V\rightarrow V$ defined by
\begin{equation}
\tau_{e}:=\begin{cases}
1 & \text{on \ensuremath{V_{e}(0)\oplus V_{e}(\frac{1}{2})}},\\
-1 & \text{on \ensuremath{V_{e}(\frac{1}{16})}},
\end{cases}\label{eq:taue}
\end{equation}
is an automorphism of $V$.

On the fixed point subspace $
V^{\tau_{e}}=V_{e}(0)\oplus V_{e}(1/2)$,  the  linear map $\sigma_{e}:\, V^{\tau_{e}}\rightarrow V^{\tau_{e}}$ defined
by
\begin{equation}
\sigma_{e}:=\begin{cases}
1 & \text{on \ensuremath{V_{e}(0)}},\\
-1 & \text{on \ensuremath{V_{e}(\frac{1}{2})}},
\end{cases}\label{eq:-1}
\end{equation}
is an automorphism of $V^{\tau_{e}}$.
\end{thm}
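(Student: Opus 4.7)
The plan is to reduce both statements to the fusion rules for the simple Virasoro VOA $L(\tfrac12,0)$, which, by Remark \ref{3irr module}, has exactly three irreducible modules $L(\tfrac12,0)$, $L(\tfrac12,\tfrac12)$, $L(\tfrac12,\tfrac1{16})$. The key input is the computation of the fusion algebra (due to Dong--Mason--Zhu and others): the only non-trivial fusion products are
\[
L(\tfrac12,\tfrac12)\boxtimes L(\tfrac12,\tfrac12)=L(\tfrac12,0),\quad
L(\tfrac12,\tfrac12)\boxtimes L(\tfrac12,\tfrac1{16})=L(\tfrac12,\tfrac1{16}),\quad
L(\tfrac12,\tfrac1{16})\boxtimes L(\tfrac12,\tfrac1{16})=L(\tfrac12,0)\oplus L(\tfrac12,\tfrac12).
\]
From this I would deduce the inclusions $V_e(h_1)_n\cdot V_e(h_2)\subset \bigoplus_{h}V_e(h)$, where the sum is over those $h\in\{0,\tfrac12,\tfrac1{16}\}$ appearing in the fusion product of $L(\tfrac12,h_1)$ with $L(\tfrac12,h_2)$. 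The argument is standard: every vertex operator $Y(a,z)\colon V\to V((z))$ with $a\in V_e(h_1)$ restricts on each $V_e(h_2)$ to an intertwining operator of type $\binom{V_e(h)}{V_e(h_1)\,V_e(h_2)}$, so its image lies in the sum of those $V_e(h)$ permitted by the fusion rules.

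Having these inclusions, verifying that $\tau_e$ is an automorphism is a matter of checking that the sign $\tau_e(h_1)\tau_e(h_2)$ coincides with $\tau_e(h)$ for every allowed triple $(h_1,h_2,h)$. The five non-trivial cases are $(0,h,h)$, $(\tfrac12,\tfrac12,0)$, $(\tfrac12,\tfrac1{16},\tfrac1{16})$, and the two components $(\tfrac1{16},\tfrac1{16},0)$ and $(\tfrac1{16},\tfrac1{16},\tfrac12)$. Each is immediate from the definition \eqref{eq:taue}; for instance $(-1)(-1)=+1$ handles the last two. Consequently $\tau_e(Y(a,z)b)=Y(\tau_ea,z)\tau_eb$ on homogeneous $a,b$, and extends by linearity to all of $V$. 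Since $\tau_e$ evidently fixes the vacuum and the Virasoro element $\omega$ (both of which lie in $V_e(0)$), it is a VOA automorphism.

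For $\sigma_e$ I would repeat the same bookkeeping, this time only on the subVOA $V^{\tau_e}=V_e(0)\oplus V_e(\tfrac12)$, where the fusion channels reduce to $0\cdot h\to h$ and $\tfrac12\cdot\tfrac12\to 0$. The three triples $(0,0,0)$, $(0,\tfrac12,\tfrac12)$, $(\tfrac12,\tfrac12,0)$ give signs $+,-,+$ respectively, matching $\sigma_e(h)$ by \eqref{eq:-1}; again the vacuum and $\omega$ are fixed, so $\sigma_e\in\aut(V^{\tau_e})$.

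The main obstacle is the fusion-rule step: one needs that each isotypic component $V_e(h)$ is actually a sum of genuine $L(\tfrac12,h)$-modules (which is provided by the rationality of $L(\tfrac12,0)$ recalled in Remark \ref{3irr module}) and that vertex operator products respect the fusion rules. Once this intertwining-operator input is granted, everything else is a finite parity check; no calculation on $V$ itself is required beyond the decomposition of Remark \ref{Veh}.
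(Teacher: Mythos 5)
This theorem is quoted from Miyamoto's paper \cite{miy} and the present paper gives no proof of its own; your fusion-rule argument (rationality of $L(\frac12,0)$, the three fusion products, and the finite sign check for $\tau_e$ and $\sigma_e$) is precisely the standard proof given there and in subsequent expositions, and it is correct. The only point that needs the care you already flag is that the projection of $Y(a,z)|_{V_e(h_2)}$ onto each isotypic component $V_e(h)$ is a genuine intertwining operator of the corresponding type, which is supplied by the rationality of $L(\frac12,0)$.
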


\subsection{3C-algebra}
Next  we recall the properties of the $3C$-algebra $U_{3C}$ from \cite{LYY2} (see also \cite{Sa}).
The followings can be found in \cite[Section 3.9]{LYY2}.
\begin{lem}\label{eiej}
Let $U=U_{3C}$ be the $3C$-algebra defined as in \cite{LYY2}. Then
\begin{enumerate}

\item  $U_1=0$ and $U$ is generated by its weight 2 subspace $U_2$ as a VOA.

\item $\dim U_2=3$ and it is spanned by 3 Ising vectors.

\item There exist exactly 3 Ising vectors in $U_2$, say, $ e^0,e^1, e^2$. Moreover,
we have
\[
(e^i)_1 (e^j) =\frac{1}{32}( e^i+e^j -e^k), \quad \text{ and } \quad \la e^i, e^j\ra =\frac{1}{2^8}
\]
for $i\neq j$ and $\{i,j,k\}=\{0,1,2\}$.

\item The Virasoro element of $U$ is given by $$\frac{32}{33}(e^0+e^1+e^2).$$

\item Let $a=\frac{32}{33}(e^0+e^1+e^2) -e^0$. Then $a$ is a simple Virasoro vector of central charge $21/22$. Moreover, the subVOA generated by $e^0$ and $a$ is isomorphic to $L(\frac{1}2,0)\tensor L(\frac{21}{22},0)$.
\end{enumerate}
\end{lem}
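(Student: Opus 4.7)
The plan is to read items (1)--(4) off the explicit construction of $U_{3C}$ given in \cite[Section 3.9]{LYY2} and then to derive (5) from (4) via the standard ``splitting off an Ising vector'' argument. For (1)--(3), the algebra $U_{3C}$ is defined in \cite{LYY2} as the subVOA generated by two Ising vectors $e^0,e^1$ with the prescribed $3C$-values of $(e^0)_1 e^1$ and $\la e^0, e^1\ra$; a third Ising vector $e^2$ is produced inside $\Span\{e^0,e^1,(e^0)_1 e^1\} = U_2$, and one checks from the model that $U_1=0$, that $\dim U_2 = 3$, and that $U$ is generated as a VOA by $U_2$. Uniqueness of the three Ising vectors in $U_2$ is then a direct calculation: substituting $v = c_0 e^0 + c_1 e^1 + c_2 e^2$ into the defining relations $v\cdot v = 2v$ and $\la v, v\ra = 1/4$, and using the Gram matrix from (3), leaves only the three solutions $v = e^0, e^1, e^2$.

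Item (4) is pure Griess-algebra arithmetic. The $S_3$-symmetry of the three Ising vectors (built into the construction) forces $\omega_U = \alpha(e^0+e^1+e^2)$ for some $\alpha\in\R$, and using (3),
\[
\omega_U \cdot e^0 = \alpha\bigl(2 e^0 + \tfrac{1}{32}(e^0+e^1-e^2) + \tfrac{1}{32}(e^0+e^2-e^1)\bigr) = \alpha\,\tfrac{33}{16}\,e^0.
\]
The identity $\omega_{U,1} e^0 = 2 e^0$ then pins down $\alpha = 32/33$.

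For (5), I invoke the standard fact that if $\omega$ is the Virasoro element of a VOA $V$ of central charge $c_V$ and $e$ is an Ising vector in $V$, then $a := \omega - e$ is a Virasoro vector of central charge $c_V - 1/2$ commuting with $e$; in particular the subVOA generated by $e$ and $a$ is a quotient of $L(\tfrac{1}{2},0)\tensor L(c_V - \tfrac{1}{2}, 0)$. Evaluating
\[
\la\omega_U,\omega_U\ra = \Bigl(\tfrac{32}{33}\Bigr)^2\Bigl(3\cdot\tfrac{1}{4} + 6\cdot\tfrac{1}{256}\Bigr) = \tfrac{8}{11}
\]
(using $\la e^i,e^i\ra = 1/4$, since $e^i$ is a Virasoro vector of central charge $1/2$, together with $\la e^i,e^j\ra = 1/256$ for $i\neq j$ from (3)) gives $c_U = 2\la\omega_U,\omega_U\ra = 16/11$ and hence $a$ has central charge $16/11 - 1/2 = 21/22$.

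The only non-formal step, and the main obstacle, is to upgrade the surjection above to an isomorphism, i.e.\ to show that $\mathrm{Vir}(a)$ is the simple Virasoro VOA $L(\tfrac{21}{22}, 0)$ rather than a non-trivial quotient. For this one uses the positive-definiteness of the invariant form on $U_{3C}$ established in \cite{LYY2}: it renders the Virasoro module generated by $a$ a unitary highest-weight module of central charge $21/22$ (a value on the discrete unitary series, corresponding to $m = 11$) and highest weight $0$, which forces it to be the simple quotient. Once both factors are simple and commute, the map $L(\tfrac{1}{2}, 0)\tensor L(\tfrac{21}{22}, 0) \to \langle e^0, a\rangle$ is necessarily an isomorphism since its source is a simple VOA.
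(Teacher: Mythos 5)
Your proposal is correct and is essentially the paper's approach: the paper offers no proof at all for this lemma, simply citing \cite[Section 3.9]{LYY2}, and the facts you read off the construction there, together with the standard computations ($\alpha = 32/33$, $c_U = 16/11$, hence $c_U - \tfrac12 = \tfrac{21}{22}$) and the positivity-forces-simplicity argument for (5), are exactly the content being quoted. The only point worth making explicit is that the ``splitting off'' step for (5) uses $U_1 = 0$ (so that $L(1)e^0 = 0$ and $\mathrm{Vir}(e^0)$, $\mathrm{Vir}(a)$ genuinely commute), which is supplied by item (1).
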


\section{Griess algebras generated by Ising vectors}

In this section,  we study  Griess algebras  generated by Ising vectors in  a  moonshine type VOA $V$ over $\R$ such that the invariant bilinear form is positive definite.  We assume that  (1) the subgroup generated by the corresponding Miyamoto
involutions has the shape $3^2{:}2$; and (2) any two of Ising vectors generate a $3C$-algebra.  We shall show that such a Griess
algebra is unique, up to isomorphism. We also show that the subVOA generated by these Ising vectors has central charge $4$ and
it has a full subVOA isomorphic to $L(\frac{1}2,0)\otimes L(\frac{21}{22}, 0) \otimes L(\frac{28}{11},0).$

First we note that the group $3^2{:}2$ has exactly 9 involutions and all of them are conjugated to each other. Consider a set of 9
Ising vectors $\{ e^0, e^1, \dots, e^8\} \subset V$ such that $\la e^{i}, e^{j}\ra=\frac{1}{2^8}$ and $\tau_{e^i}\tau_{e^j}$ has order $3$ for any $i\neq j$, that means the Griess subalgebra generated by  $e^{i}$ and $e^{j}$ is isomorphic to $\mathcal{G}U_{3C}$. We also assume the subgroup $G$ generated by their
Miyamoto involutions has the shape $3^2{:}2$.

\begin{nota}\label{eijGW}
Let $H=O_3(G)\cong 3^2$, the largest normal 3-subgroup of $G$.   Let $g$ and $h$ be generators of $H$. Then $g$ and $h$ have order 3 and $\la g,h\ra= H\cong 3^2$.
Let $e=e^0$  and denote
\[
e^{i,j} := g^ih^j e \quad \text{ for all } 0\leq i,j \leq 2.
\]
Then  $\{e^{i,j} \mid 0\leq i,j \leq 2\}= \{ e^0, e^1, \dots, e^8\}$. We also denote the Griess subalgebra and the subVOA generated
by $\{e^{i,j}\mid 0\leq i,j \leq 2\}$  by  $\G$  and $W$, respectively. The following lemma shows the uniqueness of Griess algebra of pure 3C type.
\end{nota}

\begin{lem}
 The Griess subalgebra $\G$  is spanned by $\{e^{i,j}\mid 0\leq i,j \leq 2\}$ and $\dim \G=9$.
\end{lem}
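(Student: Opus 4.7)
The plan is to establish two things separately: (i) $\G \subseteq \Span\{e^{i,j} \mid 0 \le i,j \le 2\}$, by showing every pairwise Griess product closes on this span; and (ii) linear independence of the nine vectors, via positive definiteness of their Gram matrix.

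For (i), fix distinct $e^{i,j}$ and $e^{i',j'}$. By hypothesis they generate a subVOA isomorphic to $U_{3C}$, and Lemma~2.7(3) identifies a unique third Ising vector $f$ in this subVOA together with
\[
e^{i,j}\cdot e^{i',j'} = \tfrac{1}{32}\bigl(e^{i,j} + e^{i',j'} - f\bigr).
\]
The crucial step is to show $f$ is itself one of the $e^{a,b}$. Inside $U_{3C}$ the three Ising vectors form a single orbit under the $S_3$ generated by their Miyamoto involutions, and the involution $\tau_{e^{i,j}}$ fixes $e^{i,j}$ and interchanges the other two; hence $f = \tau_{e^{i,j}}(e^{i',j'})$. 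It therefore suffices to check that $G$ stabilizes the set $\{e^{a,b}\}$. The subgroup $H$ does so by construction, since $\{e^{a,b}\} = H\cdot e$. The additional generator $\tau_e\in G$ fixes $e$ and inverts $H$ (as forced by the shape $3^2{:}2$), so
\[
\tau_e(e^{a,b}) = \bigl(\tau_e\, g^a h^b\, \tau_e^{-1}\bigr)(\tau_e e) = g^{-a}h^{-b}\cdot e \in \{e^{a,b}\}.
\]
Since $G = \la H, \tau_e\ra$, the entire group $G$ permutes $\{e^{a,b}\}$, so $f \in \{e^{a,b}\}$ and (i) follows.

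For (ii), Lemma~2.7(3) yields the off-diagonal entries $\la e^{i,j}, e^{i',j'}\ra = 1/2^8$, while the diagonal entries $\la e^{i,j}, e^{i,j}\ra = 1/4$ are the familiar value $c/2$ for the conformal vector of a Virasoro VOA at central charge $c = 1/2$. Thus the Gram matrix equals $\tfrac{63}{256}I + \tfrac{1}{256}J$ with $J$ the all-ones matrix, and its eigenvalues $63/256$ and $72/256$ are strictly positive. Alternatively, positive definiteness of the invariant form on $V_\R$ directly forces this Gram matrix to be positive definite. Either way, the nine $e^{i,j}$ are linearly independent and $\dim\G = 9$.

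The main obstacle is identifying $f$ with a member of the prescribed nine Ising vectors; once the $G$-invariance of $\{e^{a,b}\}$ is pinned down, the rest reduces to Lemma~2.7 together with an elementary Gram-matrix computation.
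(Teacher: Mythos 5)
Your proof is correct and follows essentially the same route as the paper: closure of the span under the Griess product via the $3C$ fusion rule, plus nonsingularity of the Gram matrix $\tfrac{63}{256}I+\tfrac{1}{256}J$. The only difference is that you explicitly justify why the third Ising vector of each $3C$-subalgebra again lies in $\{e^{a,b}\}$ (via $G$-invariance of this set under $H$ and the inverting involution $\tau_e$), a point the paper absorbs into its citation of Lemma~2.7 together with the rule $i+i'+i''\equiv j+j'+j''\equiv 0 \pmod 3$.
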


\begin{proof}
By  Lemma \ref{eiej},  we have
\[
e^{i,j}\cdot e^{i',j'} =
\begin{cases}
\frac{1}{32}( e^{i,j} + e^{i',j'}- e^{i'', j''}), & \text{ if } (i,j)\neq (i'j'),\\
2e^{i,j} &\text{ if } (i,j)= (i'j'),
\end{cases}
\]
where $i+i'+i''=j+j'+j''=0\mod 3$.

Therefore, $span\{e^{i,j}\mid 0\leq i,j \leq 2\}$ is closed under the Griess algebra product and $\G=span\{e^{i,j}\mid 0\leq i,j \leq 2\}$.
By our assumption, we have the Gram matrix
\[
\begin{pmatrix}
\la e^{i,j}, e^{i',j'}\ra
\end{pmatrix}
_{0\leq i,j,i'j' \leq 2}
=
\begin{pmatrix}
\frac{1}4 & \frac{1}{2^8} & \dots & \frac{1}{2^8}\\
\frac{1}{2^8} & \frac{1}4 &  \dots & \frac{1}{2^8}\\
\vdots &  \vdots & \ddots & \vdots \\
\frac{1}{2^8} & \frac{1}{2^8} & \dots & \frac{1}4
\end{pmatrix}.
\]
It has rank $9$ and hence $\{e^{i,j}\mid 0\leq i,j \leq 2\}$ is a  linearly independent set and $\dim \G=9$.
\end{proof}

Next, we shall give more details about the VOA generated by $ \{ e_{i, j}\}$.
\begin{lem}\label{w}
Let $$\w:= \frac{8}9 \sum_{0\leq i,j \leq 2} e^{i,j}.$$
Then $\w$ is a Virasoro vector of central charge $4$. Moreover, $\w\cdot e^{i,j} =e^{i,j}\cdot \w =2e^{i,j}$
for any $0\leq i,j \leq 2$.  In other words, ${\w}/2$ is the identity element in $\G$.
\end{lem}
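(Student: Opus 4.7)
The plan is to verify by direct computation in the Griess algebra that $\omega$ is an idempotent up to the factor $2$ and has squared norm $\langle\omega,\omega\rangle=2$, and then to promote these two scalar data to the full conformal relations using the moonshine-type hypothesis. The core calculation exploits the fact that $H\cong 3^2$ has exactly four subgroups of order $3$, so the eight non-identity elements of $H$ pair up into four orbits $\{(i,j),(-i,-j)\}$ under inversion. Each such pair, together with the identity, indexes a triple $\{e,\,e^{i,j},\,e^{-i,-j}\}$ that spans a copy of $\mathcal{G}U_{3C}$, whose structure constants are given by Lemma \ref{eiej}(3).

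Fixing $e:=e^{0,0}$ and applying Lemma \ref{eiej}(3), each inverse pair contributes
\[
e^{i,j}\cdot e + e^{-i,-j}\cdot e
=\tfrac{1}{32}(e^{i,j}+e-e^{-i,-j})+\tfrac{1}{32}(e^{-i,-j}+e-e^{i,j})
=\tfrac{1}{16}\,e,
\]
so summing the four pair-contributions and adding $e\cdot e=2e$ gives $\sum_{i,j}e^{i,j}\cdot e=\tfrac{9}{4}e$. Hence $\omega\cdot e=\tfrac{8}{9}\cdot\tfrac{9}{4}\,e=2e$, and by $G$-equivariance of the defining sum, the same identity $\omega\cdot e^{i,j}=2e^{i,j}$ holds for every $(i,j)$. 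It then follows by linearity that $\omega\cdot\omega=\tfrac{8}{9}\sum_{i,j}2e^{i,j}=2\omega$, and that $\omega/2$ is the Griess identity of $\mathcal{G}$. For the norm, the Gram matrix recorded in the previous lemma (nine diagonal entries $1/4$ and seventy-two off-diagonal entries $1/2^8$) yields
\[
\langle\omega,\omega\rangle=\tfrac{64}{81}\bigl(9\cdot\tfrac{1}{4}+72\cdot\tfrac{1}{256}\bigr)=\tfrac{64}{81}\cdot\tfrac{81}{32}=2,
\]
so the central charge is $c=2\langle\omega,\omega\rangle=4$.

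The main obstacle is then promoting these Griess-algebra identities to a genuine Virasoro structure, rather than merely a $2$-idempotent with the correct norm. Here I would invoke the moonshine-type hypothesis $V_1=0$, which forces $\omega_2\omega\in V_1=0$; combined with $\omega_1\omega=2\omega$, skew-symmetry, and $L(-1)\vacuum=0$, this gives $\omega_0\omega=L(-1)\omega$, while the higher modes $\omega_n\omega$ for $n\geq 4$ vanish by conformal weight. Together with the scalar datum $\omega_3\omega=2\vacuum$ extracted above, the Borcherds commutator identity then yields the Virasoro relations with $c=4$, completing the proof.
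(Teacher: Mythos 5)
Your proposal is correct and follows essentially the same route as the paper: both verify $\omega_1\omega=2\omega$ and $\langle\omega,\omega\rangle=2$ from the $3C$ structure constants of Lemma \ref{eiej} and the Gram matrix, and then invoke the standard fact that in a moonshine-type VOA such an element is a Virasoro vector of central charge $2\langle\omega,\omega\rangle$. Your organization is slightly cleaner -- you establish $\omega\cdot e^{i,j}=2e^{i,j}$ first via the inverse-pair cancellation and $H$-equivariance and deduce $\omega\cdot\omega=2\omega$ by linearity, whereas the paper expands the double sum directly and treats the identity property as an easy afterthought -- and you make explicit the promotion from Griess-algebra idempotent to Virasoro vector, which the paper leaves implicit.
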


\begin{proof}
By Lemma \ref{eiej}, we have
\[
\begin{split}
\w\cdot \w & =\left(\frac{8}{9}\right)^2\left( 2\sum_{0\leq i,j, \leq 2} e^{i,j} +   \sum_{0\leq i,j, \leq 2}
\left(\sum_{0\leq i',j' \leq 2 \atop {(i,j)\neq (i'j') \atop i+i'+i''=j+j'+j''=0 \mod 3} } \frac{1}{32}( e^{i,j} + e^{i',j'}- e^{i'', j''})\right)  \right)\\
  & =\left(  \frac{8}{9}\right)^2\left( 2\sum_{0\leq i,j, \leq 2} e^{i,j}  +  \sum_{0\leq i,j, \leq 2} \frac{8}{32} e^{i,j} \right)\\
  & = \left(  \frac{8}{9}\right)^2 \cdot \frac{9}4 \sum_{0\leq i,j, \leq 2} e^{i,j} = 2\w
\end{split}
\]
and
\[
\la \w,  \w\ra  = \left(\frac{8}{9}\right)^2 \left( \frac{1}{4} \times 9 + \frac{1}{2^8} \times 72\right) = 2.
\]
Therefore, $\w$ is a Virasoro vector of central charge $4$.
That  $\w\cdot e^{i,j} =e^{i,j}\cdot \w =2e^{i,j}$ for any $0\leq i,j \leq 2$ can be verified easily by Lemma \ref{eiej}, also .
\end{proof}

\begin{lem}\label{aiaj}
Let
\[
\begin{split}
a^1= & a= \frac{32}{33} (e^{0,0}+e^{0,1}+e^{0,2}) -e^{0,0},\\
a^2= &  \frac{32}{33} (e^{0,0}+e^{1,0}+e^{2,0}) -e^{0,0},\\
a^3= & \frac{32}{33} (e^{0,0}+e^{1,1}+e^{2,2}) -e^{0,0},\\
a^4= & \frac{32}{33} (e^{0,0}+e^{1,2}+e^{2,1}) -e^{0,0}.
\end{split}
\]
Then $a^1,a^2, a^3,a^4$ are Virasoro vectors of central charge $21/22$. Moreover,
we have
\[
a^i\cdot a^j = \frac{1}{33}( 2a^i+2a^j- a^k-a^\ell),
\]
for $i\neq j$ and $\{i,j,k,\ell\}=\{1,2,3,4\}$.
\end{lem}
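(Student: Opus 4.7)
The plan is to reduce both assertions to Lemma~\ref{eiej} applied to the four $U_{3C}$ subVOAs sitting inside $\G$. For the first assertion, observe that each of the triples $\{e^{0,0}, e^{0,1}, e^{0,2}\}$, $\{e^{0,0}, e^{1,0}, e^{2,0}\}$, $\{e^{0,0}, e^{1,1}, e^{2,2}\}$, $\{e^{0,0}, e^{1,2}, e^{2,1}\}$ is the orbit of $e^{0,0}$ under one of the four order-$3$ cyclic subgroups of $H = O_3(G) \cong 3^2$. By hypothesis any two of these Ising vectors generate a $U_{3C}$, and Lemma~\ref{eiej}(3) then forces the third Ising vector of that $U_{3C}$ to be the remaining element of the orbit. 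Thus each $S_i$ is precisely the set of three Ising vectors of a single $U_{3C}$ subVOA, and Lemma~\ref{eiej}(5) identifies $a^i$ as a simple Virasoro vector of central charge $21/22$.

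For the product formula, set $t_i = \sum_{e \in S_i} e$ so that $a^i = \frac{32}{33} t_i - e^{0,0}$, and expand
\[
a^i \cdot a^j = \left(\tfrac{32}{33}\right)^{2} t_i \cdot t_j - \tfrac{32}{33}\bigl(t_i \cdot e^{0,0} + e^{0,0}\cdot t_j\bigr) + 2\,e^{0,0}.
\]
Applying Lemma~\ref{eiej}(3) inside the $U_{3C}$ carried by $S_i$ gives $t_i \cdot e^{0,0} = \tfrac{33}{16}e^{0,0}$, so the middle contribution is $-4e^{0,0}$ and the problem reduces to computing $t_i \cdot t_j$ for $i \neq j$.

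The nine summands of $t_i \cdot t_j$ split into $e^{0,0}\cdot e^{0,0} = 2e^{0,0}$, four \emph{mixed} terms involving $e^{0,0}$ (summing to $\tfrac{1}{8}e^{0,0}$ by the same $U_{3C}$ relation), and four genuine off-diagonal products $e_{i,X}\cdot e_{j,Y}$. The crux is the last four: each is $\tfrac{1}{32}(e_{i,X} + e_{j,Y} - e''_{X,Y})$ by Lemma~\ref{eiej}(3), and the key claim is that for $\{k,l\} = \{1,2,3,4\}\setminus\{i,j\}$ the four thirds $e''_{X,Y}$ are precisely the four non-identity Ising vectors of $S_k \cup S_l$, each occurring exactly once. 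This is a purely group-theoretic statement about $H \cong 3^2$: if $K_i, K_j$ are the distinct order-$3$ subgroups producing $S_i, S_j$, then the products of non-identity elements of $K_i$ with those of $K_j$ enumerate precisely the non-identity elements of the remaining two cyclic subgroups $K_k$ and $K_l$. Granting this identification one obtains
\[
t_i \cdot t_j = \tfrac{33}{16}e^{0,0} + \tfrac{1}{16}(t_i + t_j) - \tfrac{1}{32}(t_k + t_l),
\]
and substituting back produces $a^i\cdot a^j = \tfrac{1}{33}(2a^i + 2a^j - a^k - a^l)$. The main (and essentially only) obstacle is this combinatorial bookkeeping of the third-partners; the surrounding arithmetic is routine scalar manipulation.
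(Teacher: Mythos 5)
Your proof is correct and follows essentially the same route as the paper: expand $a^i\cdot a^j$ bilinearly and evaluate each pairwise product via the $3C$ fusion rule of Lemma~\ref{eiej}(3), with the central-charge claim coming from Lemma~\ref{eiej}(5) applied to each orbit of $e^{0,0}$ under an order-$3$ subgroup of $H$. The only difference is presentational: the paper computes the single case $(i,j)=(1,2)$ explicitly and declares the others similar, whereas your observation that the ``third partners'' of $(K_i\setminus 1)\times(K_j\setminus 1)$ enumerate the non-identity elements of the remaining two cyclic subgroups of $3^2$ makes all six cases uniform at once — and that combinatorial claim checks out.
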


\begin{proof}
We only prove the case for $i=1$ and $j=2$. The other cases can be proved similarly.
By Lemma \ref{eiej}, we have
\[
\begin{split}
(e^{0,0}+e^{0,1}+e^{0,2})\cdot e^{0,0} &= 2e^{0,0}+\frac{1}{32} ( e^{0,0}+e^{0,1}-e^{0,2}+ e^{0,0}+e^{0,2} - e^{0,1}) = \frac{33}{16}e^{0,0}\\
(e^{0,0}+e^{0,1}+e^{0,2})\cdot e^{1,0} &= \frac{1}{32} ( e^{0,0}+e^{1,0}-e^{2,0}+ e^{0,1}+e^{1,0}-e^{2,2}+ e^{0,2}+e^{1,0}-e^{2,1})\\
(e^{0,0}+e^{0,1}+e^{0,2})\cdot e^{2,0} &= \frac{1}{32} ( e^{0,0}+e^{2,0}-e^{1,0}+ e^{0,1}+e^{2,0}-e^{1,2}+ e^{0,2}+e^{2,0}-e^{1,1}).
\end{split}
\]
Thus,
\[
\begin{split}
&\qquad a^1\cdot a^2\\
& =\left[ \frac{32}{33} (e^{0,0}+e^{0,1}+e^{0,2}) -e^{0,0}\right] \cdot
\left[ \frac{32}{33} (e^{0,0}+e^{1,0}+e^{2,0}) -e^{0,0}\right]\\
 &= \left(\frac{32}{33}\right)^2 \left[ \frac{33}{16} e^{0,0} +\frac{1}{32} \left (
 2(e^{0,0}+e^{0,1}+e^{0,2} + e^{1,0}+e^{2,0}) - (e^{1,1}+ e^{1,2}+e^{2,1}+e^{2,2}) \right)\right]\\
 & \quad  - 4e^{0,0}+ 2e^{0,0} \\
 &= \frac{32}{33^2} \left[ 2(e^{0,1}+e^{0,2} + e^{1,0}+e^{2,0}) - (e^{1,1}+ e^{1,2}+e^{2,1}+e^{2,2})\right] - \frac{2}{33^2}e^{0,0}\\
 &= \frac{1}{33}( 2a^1+2a^2- a^3-a^4)
\end{split}
\]
as desired.
\end{proof}

\begin{lem}
Let
\[
b^1= \frac{8}9 \sum_{0\leq i,j \leq 2} e^{i,j} -\frac{32}{33} (e^{0,0}+e^{0,1}+e^{0,2}).
\]
Then $b^1$ is a Virasoro vector of central charge $28/11$. Moreover, $e^{0,0}$, $a^1$ and $b^1$ are mutually orthogonal and $\w= e^{0,0}+ a^1 +b^1$. Therefore, $W$ has a full subVOA isomorphic to the tensor product of Virasoro VOA $$L(\frac{1}2,0)\otimes L(\frac{21}{22}, 0) \otimes L(\frac{28}{11},0).$$
\end{lem}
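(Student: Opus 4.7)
My plan is to exploit the clean decomposition $\w = e^{0,0} + a^1 + b^1$, which follows by direct substitution of the definitions of $a^1$ and $b^1$. Applying Lemma~\ref{eiej}(5) to the triple $\{e^{0,0}, e^{0,1}, e^{0,2}\}$ (a $3C$-subalgebra by the running hypothesis), the pair $e^{0,0}, a^1$ generates a subVOA isomorphic to $L(\frac{1}{2},0)\otimes L(\frac{21}{22},0)$, so $e^{0,0} \cdot a^1 = 0$ and $\la e^{0,0}, a^1\ra = 0$ come for free. The idea is to use the $\w$-decomposition to transfer the analogous orthogonality relations to $b^1$ at essentially no extra cost, then compute its central charge, and finally conclude the tensor-product structure via a positive-definiteness argument.

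For the Griess product I would invoke Lemma~\ref{w} in the form $\w \cdot x = 2x$ for all $x \in \G$. Applying $(\,\cdot\,)\cdot e^{0,0}$ to $\w = e^{0,0} + a^1 + b^1$ and combining $\w \cdot e^{0,0} = 2e^{0,0} = e^{0,0} \cdot e^{0,0}$ with $a^1 \cdot e^{0,0} = 0$ forces $b^1 \cdot e^{0,0} = 0$; the symmetric argument with $a^1$, using $a^1 \cdot a^1 = 2a^1$ from Lemma~\ref{aiaj}, yields $b^1 \cdot a^1 = 0$; and then $b^1 \cdot b^1 = \w \cdot b^1 - b^1 \cdot e^{0,0} - b^1 \cdot a^1 = 2b^1$. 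The inner products $\la e^{0,0}, b^1\ra$ and $\la a^1, b^1\ra$ vanish by the same mechanism, reducing to the short computations $\la \w, e^{0,0}\ra = \frac{1}{4} = \la e^{0,0}, e^{0,0}\ra$ and $\la \w, a^1\ra = \frac{21}{44} = \la a^1, a^1\ra$, both of which follow from the Gram matrix displayed earlier together with the definition of $a^1$.

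To compute the central charge I would expand
\[
\la b^1, b^1\ra = \la \w, \w\ra - 2\la \w, e^{0,0}\ra - 2\la \w, a^1\ra + \la e^{0,0}, e^{0,0}\ra + 2\la e^{0,0}, a^1\ra + \la a^1, a^1\ra
\]
and substitute $\la \w, \w\ra = 2$ (central charge $4$) together with the values already established; the arithmetic should collapse to $\la b^1, b^1\ra = \frac{14}{11}$, so $b^1$ is a Virasoro vector of central charge $\frac{28}{11}$. Since $V_\R$ carries a positive definite invariant form, the subVOA generated by each simple Virasoro vector in $W$ is necessarily its simple quotient $L(c,0)$. Moreover, for any $u, v \in V_2$ the coefficient $u_2 v$ lies in $V_1 = 0$, so the Griess- and form-orthogonality of each of the three pairs among $e^{0,0}, a^1, b^1$ upgrades to $u_n v = 0$ for all $n \geq 1$, i.e.\ the three Virasoro subalgebras $\mathrm{Vir}(e^{0,0})$, $\mathrm{Vir}(a^1)$, $\mathrm{Vir}(b^1)$ pairwise commute. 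Hence the subVOA they jointly generate is the tensor product $L(\frac{1}{2},0)\otimes L(\frac{21}{22},0)\otimes L(\frac{28}{11},0)$, and since $\w$ is the sum of their Virasoro elements this is a full subVOA of $W$. The one step beyond bookkeeping is this final upgrade from pointwise orthogonality to commuting conformal subalgebras, which is standard in the moonshine-type setting.
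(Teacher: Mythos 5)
Your proposal is correct and follows essentially the same route as the paper, whose entire proof is a one-line citation of Lemma~\ref{eiej}(4),(5) and Lemma~\ref{w}: your computations ($b^1\cdot x=\w\cdot x-e^{0,0}\cdot x-a^1\cdot x$ for $x\in\{e^{0,0},a^1,b^1\}$, and $\la b^1,b^1\ra=\tfrac{14}{11}$) are exactly the details being suppressed there. The one imprecision is the final ``i.e.'': knowing $u_nv=0$ for all $n\ge 1$ does not by itself make the Virasoro algebras commute, since the commutator formula also involves $u_0v\in V_3$; the vanishing of the zero modes comes from the Frenkel--Zhu commutant theorem applied to the decomposition $\w=e^{0,0}+a^1+b^1$ of the Virasoro element of $W$, a standard step that the paper likewise leaves implicit.
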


\begin{proof}
It follows from (4), (5) of Lemma \ref{eiej}  and Lemma \ref{w}.
\end{proof}

\begin{lem}
For any $i,j\in \{2,3,4\}, i\neq j$, the element $a^i-a^j$ is a highest weight vector
of weight $(0,1/11, 21/11)$ with respect to $\vir(e^{0,0})\otimes \vir(a^1)\otimes \vir(b^1)$.
\end{lem}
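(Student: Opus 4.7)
The plan is to verify two things for $v:=a^i-a^j$ (with $i,j\in\{2,3,4\}$, $i\neq j$): that $v$ is a simultaneous zero-mode eigenvector with eigenvalues $0,\tfrac{1}{11},\tfrac{21}{11}$ under $(e^{0,0})_1,(a^1)_1,(b^1)_1$, and that $v$ is annihilated by the positive Virasoro modes of each of $\vir(e^{0,0}),\vir(a^1),\vir(b^1)$.

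For the eigenvalues, I note that on $V_2$ each zero mode $(\w')_1$ acts as the Griess product $\w'\cdot(-)$. By Lemma~\ref{eiej}(5), each $a^k$ ($k=1,2,3,4$) sits together with $e^{0,0}$ inside a copy of $L(\tfrac12,0)\otimes L(\tfrac{21}{22},0)$ in which $e^{0,0}$ and $a^k$ occupy orthogonal tensor factors; hence $e^{0,0}\cdot a^k=0$, and in particular $e^{0,0}\cdot v=0$. Setting $s:=a^2+a^3+a^4$, the formula in Lemma~\ref{aiaj} can be rewritten as $a^1\cdot a^k=\tfrac{1}{33}(2a^1+3a^k-s)$ for every $k\in\{2,3,4\}$, so $a^1\cdot v=\tfrac{3}{33}(a^i-a^j)=\tfrac{1}{11}v$. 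Finally, since $\w/2$ is the identity of $\G$ by Lemma~\ref{w}, we have $\w\cdot v=2v$, and therefore $b^1\cdot v=(\w-e^{0,0}-a^1)\cdot v=(2-0-\tfrac{1}{11})v=\tfrac{21}{11}v$.

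For annihilation by the positive modes, I exploit the moonshine-type hypothesis: $V_1=0$ and $V_0=\C\vac$. For a Virasoro vector $\w'\in V_2$ and a weight-$2$ vector $u$, the mode $(\w')_m u$ has weight $3-m$, so it vanishes for $m\geq 4$ and for $m=2$ (since $V_1=0$), while for $m=3$ it equals $\la\w',u\ra\vac$ by the definition of the invariant form. Thus primariness reduces to the three vanishings $\la e^{0,0},v\ra=\la a^1,v\ra=\la b^1,v\ra=0$. The first is immediate from Lemma~\ref{eiej}(5). For the third, the identity $\la\w,a^k\ra=\la(\w/2)\cdot a^k,a^k\ra=\la a^k,a^k\ra=\tfrac{21}{44}$ (using Lemma~\ref{w} and that $a^k$ is a Virasoro vector of central charge $21/22$) is independent of $k$, so $\la\w,v\ra=0$; combined with the other two vanishings and $b^1=\w-e^{0,0}-a^1$ this yields $\la b^1,v\ra=0$.

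The main obstacle is the middle vanishing $\la a^1,v\ra=0$: there is no obvious group-theoretic symmetry permuting $a^2,a^3,a^4$, since the stabilizer of $e^{0,0}$ inside $3^2{:}2$ fixes each $a^k$ individually rather than cycling them. I would overcome this by combining invariance of the form with $a^1\cdot a^1=2a^1$: the identity $2\la a^1,a^k\ra=\la a^1,a^1\cdot a^k\ra$ together with Lemma~\ref{aiaj} yields $64\la a^1,a^k\ra+\la a^1,a^{k'}\ra+\la a^1,a^{\ell'}\ra=\tfrac{21}{22}$ for each $k\in\{2,3,4\}$ (where $\{k,k',\ell'\}=\{2,3,4\}$). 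Subtracting any two of these three equations forces $\la a^1,a^k\ra$ to be the same for all $k\in\{2,3,4\}$, and hence $\la a^1,v\ra=0$.
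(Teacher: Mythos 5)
Your proposal is correct, and its core is the same computation as the paper's proof: apply Lemma~\ref{aiaj} to get $(a^1)_1(a^i-a^j)=\tfrac{1}{11}(a^i-a^j)$, note $(e^{0,0})_1a^k=0$ and $\w_1a^k=2a^k$, and subtract to read off the third eigenvalue. What you add --- and the paper silently omits --- is the verification that $v=a^i-a^j$ is actually annihilated by the positive Virasoro modes; your reduction of this (via $V_1=0$, $V_0=\C\vac$, and weight counting) to the three conditions $\la e^{0,0},v\ra=\la a^1,v\ra=\la b^1,v\ra=0$ is exactly right, and your hands-on verification of those conditions is valid (the step $\la\w,a^k\ra=\la(\w/2)\cdot a^k,a^k\ra$ is not a general identity, but it does hold here by writing $a^k=\tfrac12 a^k\cdot a^k$ and using self-adjointness of $(a^k)_1$). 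There is, however, a much cheaper route to the orthogonality you might note: since $L(1)$ kills $V_2$ in a moonshine-type VOA, each $(\w')_1$ is self-adjoint for the invariant form, so $\la\w',v\ra=\tfrac12\la\w',(\w')_1v\ra=\tfrac{\lambda}{2}\la\w',v\ra$ forces $\la\w',v\ra=0$ whenever the eigenvalue $\lambda$ of $(\w')_1$ on $v$ is not $2$; as the eigenvalues here are $0$, $\tfrac1{11}$, $\tfrac{21}{11}$, all three orthogonality conditions come for free from the eigenvalue computation, and your linear-system argument for $\la a^1,a^2\ra=\la a^1,a^3\ra=\la a^1,a^4\ra$ becomes unnecessary.
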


 \begin{proof}
By Lemma \ref{aiaj}, we have
\[
a^1_1(a^i-a^j) =\frac{1}{33}[ (2a^1+2a^i  -a^j -a^k) - (2a^1+2a^j  -a^i -a^k)] =\frac{1}{11}(a^i-a^j),
\]
where $\{i,j,k\}=\{2,3,4\}$.
Since $e^{0,0}_1 a^i=0$ and  $\om_1 a^i=2 a^i$ for all $i$, $a^i-a^j$ is a highest weight vector
of weight $(0,1/11, 21/11)$ with respect to $\vir(e^{0,0})\otimes \vir(a^1)\otimes \vir(b^1)$.
\end{proof}

\begin{lem}\label{1over16}
The vector $e^{0,1}-e^{0,2}$  is a highest weight vector
of weight $(1/16,31/16,0)$ with respect to $\vir(e^{0,0})\otimes \vir(a^1)\otimes \vir(b^1)$.
\end{lem}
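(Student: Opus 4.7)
The plan is to verify each of the three $L(0)$-eigenvalues directly using Lemma \ref{eiej} and the formula $\w=e^{0,0}+a^1+b^1$ from the previous lemma, and then dispose of the higher Virasoro modes by a symmetry argument.

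First I would handle the $\vir(e^{0,0})$-eigenvalue. Applying Lemma \ref{eiej} to the pair $(e^{0,0},e^{0,1})$ and $(e^{0,0},e^{0,2})$ gives
\[
(e^{0,0})_1 e^{0,1}=\tfrac{1}{32}(e^{0,0}+e^{0,1}-e^{0,2}),\qquad (e^{0,0})_1 e^{0,2}=\tfrac{1}{32}(e^{0,0}+e^{0,2}-e^{0,1}),
\]
so subtracting yields $(e^{0,0})_1(e^{0,1}-e^{0,2})=\tfrac{1}{16}(e^{0,1}-e^{0,2})$. For the $\vir(a^1)$-eigenvalue I would avoid direct expansion by observing that $e^{0,0}+a^1=\tfrac{32}{33}(e^{0,0}+e^{0,1}+e^{0,2})$ is the Virasoro element of the $3C$-subVOA $U_{3C}$ generated by $e^{0,0},e^{0,1},e^{0,2}$, so it acts as $2$ on each of $e^{0,1}$ and $e^{0,2}$. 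Hence $(a^1)_1(e^{0,1}-e^{0,2})=2(e^{0,1}-e^{0,2})-\tfrac{1}{16}(e^{0,1}-e^{0,2})=\tfrac{31}{16}(e^{0,1}-e^{0,2})$. Since $\w_1$ acts as $2$ on any weight $2$ vector and $\w=e^{0,0}+a^1+b^1$, it then follows that $(b^1)_1(e^{0,1}-e^{0,2})=0$.

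It remains to verify the highest-weight conditions, i.e.\ $L(n)(e^{0,1}-e^{0,2})=0$ for $n\ge1$ with respect to each of $\vir(e^{0,0})$, $\vir(a^1)$, $\vir(b^1)$. For a Virasoro vector $v$ of weight $2$ acting on a weight $2$ element $u$, the only potentially nonzero modes besides $v_1=L(0)$ that could contribute to a highest-weight obstruction are $v_2=L(1)$ and $v_3=L(2)$. By weight counting $v_2 u$ sits in $V_1=0$, and $v_3 u=\langle v,u\rangle\vacuum$, while $v_n u=0$ for $n\ge4$. Thus the only conditions to check are $\langle e^{0,0},e^{0,1}-e^{0,2}\rangle=\langle a^1,e^{0,1}-e^{0,2}\rangle=\langle b^1,e^{0,1}-e^{0,2}\rangle=0$. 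The first vanishes because $\langle e^{0,0},e^{0,j}\rangle=1/2^8$ for $j=1,2$, and the latter two vanish because both $a^1=-\tfrac{1}{33}e^{0,0}+\tfrac{32}{33}(e^{0,1}+e^{0,2})$ and $b^1=\w-e^{0,0}-a^1$ are symmetric in the pair $(e^{0,1},e^{0,2})$, making $\langle a^1,e^{0,1}\rangle=\langle a^1,e^{0,2}\rangle$ and similarly for $b^1$.

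The argument involves no real obstacle; the only point to keep straight is the identification of $e^{0,0}+a^1$ with the Virasoro element of the underlying $U_{3C}$-subalgebra, which lets one bypass expanding $a^1\cdot e^{0,j}$ by brute force. Once the three eigenvalue computations and the pairing computations are in place, the claim follows immediately, and one checks consistency via $\tfrac{1}{16}+\tfrac{31}{16}+0=2$, matching the total conformal weight of $e^{0,1}-e^{0,2}$.
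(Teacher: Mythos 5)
Your proof is correct and follows essentially the same route as the paper: compute $(e^{0,0})_1(e^{0,1}-e^{0,2})$ from Lemma \ref{eiej}, note that $\tfrac{32}{33}(e^{0,0}+e^{0,1}+e^{0,2})$ acts as $2$ on $e^{0,1}-e^{0,2}$, and read off the $a^1$- and $b^1$-eigenvalues from $a^1=\tfrac{32}{33}(e^{0,0}+e^{0,1}+e^{0,2})-e^{0,0}$ and $b^1=\w-\tfrac{32}{33}(e^{0,0}+e^{0,1}+e^{0,2})$. Your additional verification that the positive Virasoro modes annihilate the vector (via $V_1=0$ and the orthogonality $\langle v,e^{0,1}-e^{0,2}\rangle=0$) is a point the paper leaves implicit, and is a welcome bit of extra care.
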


 \begin{proof}
By direct calculations, we have
\[
e^{0,0}_1(e^{0,1}-e^{0,2}) =\frac{1}{32}[( e^{0,0} +e^{0,1}-e^{0,2}) - ( e^{0,0} +e^{0,2}-e^{0,1})]= \frac{1}{16} (e^{0,1}-e^{0,2})
\]
and
\[
\frac{32}{33}( e^{0,0} +e^{0,1}+e^{0,2})_1 (e^{0,1}-e^{0,2}) = 2 (e^{0,1}-e^{0,2}).
\]
Since $a^1= \frac{32}{33}( e^{0,0} +e^{0,1}+e^{0,2})-e^{0,0}$ and  $b^1= \om - \frac{32}{33}( e^{0,0} +e^{0,1}+e^{0,2})$, we have $$a^1_1(e^{0,1}-e^{0,2})= \frac{31}{16}(e^{0,1}-e^{0,2})\quad \text{  and }\quad  b^1_1 (e^{0,1}-e^{0,2})=0$$
as desired.
\end{proof}

\begin{lem}
With respect to $\vir(e^{0,0})\otimes \vir(a^1)\otimes \vir(b^1)$,
$(e^{1,0}+e^{1,1}+e^{1,2})- (e^{2,0}+e^{2,1}+e^{2,2})$ is a highest weight vector of weight
$(1/16, 21/176, 20/11)$. On the other hand, $(e^{1,1} -e^{2,2})- (e^{1,2}-e^{2,1})$ and
$(e^{1,0} -e^{2,0})- (e^{1,1}-e^{2,2})$ are highest weight vectors of weight
$(1/16, 5/176, 21/11)$.
\end{lem}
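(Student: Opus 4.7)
The plan is to verify each of the three claimed eigenvalues for each of the three candidate vectors by direct computation, and then to promote the resulting $L_0^{(i)}$-eigenvector property to the highest-weight property by a general grading argument. Since $\w=e^{0,0}+a^1+b^1$ acts as $2$ on every weight-$2$ vector (Lemma~\ref{w}), in each case it suffices to compute the $e^{0,0}_1$- and $a^1_1$-eigenvalues; the $b^1_1$-eigenvalue is then forced by subtraction from $2$. Since moreover $a^1=\tfrac{32}{33}(e^{0,0}+e^{0,1}+e^{0,2})-e^{0,0}$, everything reduces to evaluating $e^{0,k}_1 v$ for $k=0,1,2$.

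The engine is Lemma~\ref{eiej}(3): $(e^i)_1 e^j=\tfrac{1}{32}(e^i+e^j-e^k)$ whenever $\{e^i,e^j,e^k\}$ is a 3C-triple. Under our hypothesis the $12$ such triples are exactly the $12$ lines of the affine plane $\mathbb{F}_3^2$ (three rows, three columns, and six diagonals of slopes $\pm 1$), so for every pair $(e^{0,k},e^{i',j'})$ the third Ising vector in their triple can be read off at once. A direct expansion should give, for $v=(e^{1,0}+e^{1,1}+e^{1,2})-(e^{2,0}+e^{2,1}+e^{2,2})$, each $e^{0,k}_1 v=\tfrac{1}{16}v$, hence $(e^{0,0}+e^{0,1}+e^{0,2})_1 v=\tfrac{3}{16}v$, and consequently $a^1_1 v=\bigl(\tfrac{32}{33}\cdot\tfrac{3}{16}-\tfrac{1}{16}\bigr)v=\tfrac{21}{176}v$ and $b^1_1 v=\tfrac{20}{11}v$. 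For the other two vectors one should still obtain $e^{0,0}_1 v=\tfrac{1}{16}v$, but now the diagonal contributions partially cancel, producing $(e^{0,0}+e^{0,1}+e^{0,2})_1 v=\tfrac{3}{32}v$ and therefore $a^1_1 v=\tfrac{5}{176}v$ and $b^1_1 v=\tfrac{21}{11}v$, as claimed.

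It remains to upgrade each such $L_0^{(i)}$-eigenvector $v$ of total weight $2$ to a highest weight vector for $\vir(e^{0,0})\otimes \vir(a^1)\otimes \vir(b^1)$. Since $L_n^{(i)} v$ lies in $V_{2-n}$, it vanishes automatically for $n=1$ (as $V_1=0$) and for $n\ge 3$ (as $V$ is non-negatively graded with $V_0=\C\vacuum$). The only remaining term $L_2^{(i)} v=\la \omega_i,v\ra\vacuum$ vanishes as well: $L_0^{(i)}$ is self-adjoint for the invariant form, so $\omega_i$ and $v$, being $L_0^{(i)}$-eigenvectors with distinct eigenvalues ($2$ versus $h_i<2$), must be orthogonal.

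The main (and indeed only) obstacle is the combinatorial bookkeeping in the second paragraph: correctly identifying the diagonal $AG(2,3)$-line through each pair of points in $\mathbb{F}_3^2$ and verifying that the resulting signed sums over the nine $e^{i,j}$ really do collapse onto a scalar multiple of $v$. Once those tables are in hand, all three assertions follow from the same template, differing only by a final arithmetic simplification.
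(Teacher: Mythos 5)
Your proposal is correct and follows essentially the same route as the paper: both reduce everything to the fusion rule $e^{i,j}\cdot e^{i',j'}=\tfrac{1}{32}(e^{i,j}+e^{i',j'}-e^{i'',j''})$ along the lines of $\mathbb{F}_3^2$, compute the $e^{0,0}_1$- and $(e^{0,0}+e^{0,1}+e^{0,2})_1$-eigenvalues (your intermediate values $\tfrac{3}{16}$ and $\tfrac{3}{32}$ match the paper's $\tfrac{2}{11}$ and $\tfrac{1}{11}$ after the $\tfrac{32}{33}$ rescaling), and obtain the $b^1$-eigenvalue by subtracting from $2$. Your final paragraph upgrading the simultaneous $L_0^{(i)}$-eigenvector property to the highest-weight property via the grading and $V_1=0$ is a small piece of extra rigor that the paper leaves implicit, but it does not change the substance of the argument.
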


 \begin{proof}
By the same calculations as in Lemma \ref{1over16},
$(e^{1,0}- e^{2,0})$, $(e^{1,1} -e^{2,2})$ and $(e^{1,2}-e^{2,1})$ are $1/16$-eigenvectors of $e^{0,0}_1$.

By Lemma \ref{eiej}, we also have
\[
\frac{32}{33}(e^{0,0}+e^{0,1}+e^{0,2})_1( e^{1,1}- e^{2,2}) =
\frac{1}{33}(4( e^{1,1}- e^{2,2})+(e^{1,0}-e^{2,0})+(e^{1,2}-e^{2,1})).
\]

Let $v=(e^{1,0}+e^{1,1}+e^{1,2})- (e^{2,0}+e^{2,1}+e^{2,2})$. Then
\[
\frac{32}{33}(e^{0,0}+e^{0,1}+e^{0,2})_1 v = \frac{1}{33} (4+1+1)v=\frac{2}{11}v.
\]
Thus, $a_1^1 v= (\frac{2}{11} -\frac{1}{16})v =\frac{21}{176} v$ and $b^1_1v = (2- \frac{2}{11}) v=\frac{20}{11} v$.

Moreover,
\[
\begin{split}
 &\frac{32}{33}(e^{0,0}+e^{0,1}+e^{0,2})_1( (e^{1,1}- e^{2,2}) - (e^{1,2}-e^{2,1})) \\
=&\frac{1}{33} (4-1) ( (e^{1,1}- e^{2,2}) - (e^{1,2}-e^{2,1}))\\
 = &\frac{1}{11} ( (e^{1,1}- e^{2,2}) - (e^{1,2}-e^{2,1})).
\end{split}
\]
Thus, we have $$a_1^1 ( (e^{1,1}- e^{2,2}) - (e^{1,2}-e^{2,1}))= \frac{5}{176} ( (e^{1,1}- e^{2,2}) - (e^{1,2}-e^{2,1}))$$ and $$b^1_1 ( (e^{1,1}- e^{2,2}) - (e^{1,2}-e^{2,1}))v = \frac{21}{11} ( (e^{1,1}- e^{2,2}) - (e^{1,2}-e^{2,1})).$$

The remaining cases can be proved similarly.
\end{proof}

\section{Lattice VOA $V_{E_8\perp E_8\perp E_8}$}

In this section, we shall construct explicitly a VOA $W$ satisfying the hypothesis in Section 3 inside the lattice VOA $V_{E_8\perp
E_8\perp E_8}$.

Our notation for the lattice vertex operator algebra
\begin{equation}\label{VL}
V_L = M(1) \otimes \C\{L\}
\end{equation}
associated with a positive definite even lattice $L$ is standard \cite{FLM}. In particular, ${\mathfrak h}=\C\otimes_{\Z} L$ is
an abelian Lie algebra and we extend the bilinear form to ${\mathfrak h}$ by $\C$-linearity. Also, $\hat {\mathfrak h}={\mathfrak
h}\otimes \C[t,t^{-1}]\oplus \C k$ is the corresponding affine algebra and $\C k$ is the 1-dimensional center of
$\hat{\mathfrak{h}}$. The subspace $M(1)=\C[\al_i(n)|1\leq i\leq d, n<0]$ for a basis $\{\al_1, \dots,\al_d\}$ of $\mathfrak{h}$,
where $\al(n)=\al\otimes t^n,$ is the unique irreducible $\hat{\mathfrak h}$-module such that $\alpha(n)\cdot 1=0$ for all
$\alpha\in {\mathfrak h}$ and $n$ nonnegative, and $k=1.$ Also, $\C\{L\}=span \{e^{\beta}\mid \beta\in L\}$ is the twisted group
algebra of the additive group $L$ such that $e^\be e^\al=(-1)^{\la \al, \be\ra} e^\al e^\be$ for any $\al, \be\in L$. The vacuum
vector $\mathbbm{1}$  of $V_L$ is $1\otimes e^0$ and the Virasoro element $\omega_L$ is
$\frac{1}{2}\sum_{i=1}^d\beta_i(-1)^2\cdot \mathbbm{1}$ where $\{\beta_1,..., \beta_d\}$ is an orthonormal basis of ${\mathfrak
h}.$ For the explicit definition of the corresponding vertex operators, we shall refer to \cite{FLM} for details.

\medskip

\begin{df}\label{tensor}
Let $A$ and $B$ be integral lattices with the inner products $\la \  , \ \ra_A$  and $\la \  ,  \  \ra_B$, respectively. {\it The tensor
product of the lattices} $A$ and $B$ is defined to be the integral lattice which is isomorphic to $A\otimes_\Z B$ as a $\Z$-module
and has the inner product given by
\[
\la \al\otimes \be, \al' \otimes \be'\ra  = \la\al,\al'\ra_A \cdot \la \be,\be'\ra_B, \quad \text{ for any } \al,\al'\in A,\text{ and } \be,\be'\in B.
\]
We simply denote the tensor product of the lattices $A$ and $B$ by $A\otimes B$.
\end{df}

Now let $L=E_8\perp E_8\perp E_8$ be the orthogonal sum of 3 copies of the root lattice of type $E_8$. Set
\begin{equation}\label{MN}
\begin{split}
M& =\{ (\al, -\al,0) \mid \al\in E_8\} <L,\quad \text{  and }\\
N&=\{ (0, \al, -\al) \mid \al\in E_8\} <L.
\end{split}
\end{equation}
Then $M\cong N \cong \sqrt{2}E_8$ and $M+N\cong A_2\otimes E_8$ (see \cite{GL}). Note that there is a third
$\sqrt{2}E_8$-sublattice
\[
\tilde{N} =\{ (\al,0, -\al)\mid \al \in E_8\} < M+N.
\]

We shall fix a (bilinear) 2-cocyle  $\varepsilon_0: E_8\times E_8 \to \Z_2$ such that
\begin{equation}\label{epsilon0}
\begin{split}
\varepsilon_0(\al, \al)  \equiv \frac{1}{2}\la \al, \al\ra  \mod 2,\\
\text{and}\quad  \varepsilon_0(\al, \be) -  \varepsilon_0(\be, \al) \equiv \la \al, \be\ra \mod 2,\\
\end{split}
\end{equation}
for all $\al, \be \in E_8$. Note that such a $2$-cocycle exists (cf. \cite[(6.1.27)-(6.1.29)]{FLM}). Moreover, $e^\al e^{-\al} = - e^0$ for any $\al \in E_8$ such that $ \la \al, \al \ra =2$.

 We shall extend $\varepsilon_0$ to $L$ by
defining
\[
 \varepsilon_0\Big( (\al,\al', \al''), ( \be, \be', \be'')\Big)= \varepsilon_0(\al, \be )+ \varepsilon_0(\al', \be') +\varepsilon_0(\al'', \be'').
 \]
 It is easy to check by direct calculations that $\varepsilon_0$ is trivial on $M$, $N$ or $\tilde{N}$.

\medskip

 Consider the lattice VOA
 \[
 V_L\cong V_{E_8}\otimes V_{E_8}\otimes V_{E_8}.
 \]

\medskip

\begin{df}\label{rho}
Let ${\bf a}  $ be an element of $E_8$ such that
 \[
K:= \{\be\in E_8\mid \la \be, {\bf a} \ra \in 3\Z\} \cong A_8.
 \]
Set $\tilde{{\bf a}}=({\bf a}, -{\bf a}, 0)$ and define an automorphism $\rho$ of $V_L$ by
\[
\rho = \exp\left( \frac{2\pi i}3 \tilde{{\bf a}}(0)\right).
\]
Then $\rho$ has order $3$ and the fixed point subspace $V_M^\rho \cong V_{\sqrt{2}A_8}$.
\end{df}

\begin{nota}\label{efg}
Let $M, N$ and $\rho$ be defined as above. Set
\[
\begin{split}
e& := e_M =\frac{1}{16}\omega_M + \frac{1}{32} \sum_{\al\in M(4)} e^\al,\\
f& := e_N =\frac{1}{16}\omega_N + \frac{1}{32} \sum_{\al\in N(4)} e^\al,\\
e_{ \tilde{N}} & :=\frac{1}{16}\omega_{ \tilde{N}} + \frac{1}{32} \sum_{\al\in { \tilde{N}} (4)} e^\al, \quad \text{ and }\\
e'&:= \rho(e).
\end{split}
\]
It is shown in \cite{DLMN} that $e, f$ and $e_{\tilde{N}}$ are Ising vectors and hence $e'= \rho(e)$ is also an Ising vector (see also \cite{LYY,LYY2}).
\end{nota}

The following lemma can be proved by direct calculations (see \cite{GL,LYY,LYY2}).

\begin{lem}
We have  $\la e, f\ra =\la e, e'\ra= \la f,e'\ra = 1/{2^8}$. Moreover, the subVOA
$\VOA(e,f)$,  $\VOA(e,g)$, $\VOA(f,g)$ generated by $\{e,f\}$, $\{e,e'\}$ and $\{f,e'\}$, respectively, are isomorphic to the $3C$-algebra $U_{3C}$. We also have $ e_M \cdot e_N = 1/{32} ( e_M + e_N - e_{ \tilde{N} })$, and hence $ \tau_e (f) = e_{\tilde{N}}$.
\end{lem}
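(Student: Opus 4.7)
The plan is to verify the three inner products $\la e,f\ra = \la e,e'\ra = \la f,e'\ra = 1/2^8$ by direct computation in $V_L$, invoke the Sakuma--LYY classification \cite{Sa,LYY} to identify each pair as generating a copy of $U_{3C}$, and then deduce the product formula and the Miyamoto action from the structure of $U_{3C}$ recalled in Lemma \ref{eiej}.

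First I would expand each Ising vector as $e_P = \frac{1}{16}\om_P + \frac{1}{32}\sum_{\al\in P(4)}e^\al$ for $P \in \{M, N, \tilde N\}$. Since the invariant form on $V_L$ respects the $L$-momentum grading, the mixed terms $\la \om_P, e^\al\ra$ vanish automatically, and $\la e^\al, e^\be\ra \neq 0$ forces $\al+\be=0$. For $\la e_M, e_N\ra$ and $\la e_N, e'\ra$, this last condition would require $\al \in M \cap (-N) = 0$, ruling out any lattice contribution, so only the Heisenberg pairing $\la \om_M, \om_N\ra$ survives; using orthonormal bases of $\C \otimes M$ and $\C \otimes N$ induced from one of $\C \otimes E_8$ one computes $\la \om_M, \om_N\ra = 1$, hence both pairings equal $1/2^8$. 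For $\la e, e'\ra$ the lattice contribution persists because $e$ and $e' = \rho(e)$ share the support $M$; writing $\rho(e) = \frac{1}{16}\om_M + \frac{1}{32}\sum_{\al\in M(4)} \chi(\al) e^\al$ with $\chi(\al) = \exp\bigl((2\pi i/3)\la \tilde{{\bf a}}, \al\ra\bigr)$, the computation reduces to evaluating $\sum_{\be \in E_8(2)} \exp\bigl((4\pi i/3)\la {\bf a}, \be\ra\bigr)$. The hypothesis $K \cong A_8$ splits the $240$ roots of $E_8$ as $72$ roots lying in $K$ plus $84 + 84$ roots in the two nontrivial cosets of $K$, yielding a character sum of $-12$; combined with the Heisenberg contribution this produces $\la e, e'\ra = 1/2^8$.

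Once the three pairings are established, the Sakuma--LYY classification forces each of $\VOA(e,f), \VOA(e,e'), \VOA(f,e')$ to be isomorphic to $U_{3C}$, since the value $1/2^8$ characterizes the $3C$ case among the nine dihedral possibilities. The formula $e_M \cdot e_N = \frac{1}{32}(e_M + e_N - e_{\tilde N})$ then follows from Lemma \ref{eiej}(3): the right-hand side must equal $\frac{1}{32}(e_M + e_N - e^*)$ with $e^*$ the third Ising vector of $\VOA(e_M, e_N)$, and a direct computation of $e_M \cdot e_N$ in $V_L$ (using the $A_2 \otimes E_8$ structure of $M+N$, with the surviving lattice terms indexed by $\tilde N(4)$) identifies $e^* = e_{\tilde N}$. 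Finally, since $\tau_e \tau_f$ has order $3$ in $\aut(U_{3C})$, $\tau_e$ does not fix $f$; as $\tau_e$ permutes the three Ising vectors $\{e, f, e_{\tilde N}\}$ of $U_{3C}$, it must send $f$ to $e_{\tilde N}$.

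The main obstacle is the character sum appearing in $\la e, e'\ra$: one must count the $240$ roots of $E_8$ by the residue of $\la {\bf a}, \cdot\ra \bmod 3$, which is precisely where the $A_8$-hypothesis on $K$ enters essentially. The remaining ingredients are either routine momentum-grading arguments inside $V_L$ or direct appeals to the $U_{3C}$-structure of Lemma \ref{eiej}.
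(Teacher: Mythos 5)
Your computations are correct and this is essentially the ``direct calculation'' the paper delegates to \cite{GL,LYY,LYY2}: the momentum grading kills all lattice contributions to $\la e_M,e_N\ra$ and $\la e_N,\rho e_M\ra$, the Heisenberg pairing $\la \om_M,\om_N\ra=1$ gives $1/16^2=1/2^8$, and for $\la e_M,\rho e_M\ra$ the split of the $240$ roots of $E_8$ as $72+84+84$ according to $\la \mathbf{a},\cdot\ra \bmod 3$ gives the character sum $72+84\xi+84\xi^2=-12$, whence $\tfrac{1}{16^2}\cdot 4-\tfrac{12}{32^2}=1/2^8$. The identification of the third Ising vector as $e_{\tilde{N}}$ and the deduction $\tau_e(f)=e_{\tilde{N}}$ from the $S_3$-action on the three Ising vectors of $U_{3C}$ are also as intended.

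One step needs repair. You invoke the Sakuma classification to conclude $\VOA(e,f)\cong U_{3C}$ from the single value $\la e,f\ra=1/2^8$, but Sakuma's theorem (as the paper itself recalls at the start of Section 3) is stated for moonshine-type VOAs over $\R$ with positive definite form, in particular $V_1=0$; the ambient $V_{E_8^3}$ has a $24$-dimensional weight-one subspace, so the theorem does not apply off the shelf, and even granting it, it classifies the Griess subalgebra rather than the generated subVOA. Your direct computation of $e_M\cdot e_N=\tfrac{1}{32}(e_M+e_N-e_{\tilde{N}})$ does establish the Griess-algebra statements and hence $\tau_e(f)=e_{\tilde{N}}$ without Sakuma, but the VOA-level isomorphism with $U_{3C}$ should instead be sourced from \cite{LYY,LYY2,GL}, where $U_{3C}$ is realized by explicit construction as the subVOA generated by exactly such a pair ($e_M$ and $e_N$ inside $V_{A_2\otimes E_8}$, equivalently $e$ and $\rho e$ inside $V_{\sqrt{2}E_8}$), or from a version of Sakuma's theorem whose hypotheses are verified at the level of fusion rules rather than requiring $V_1=0$ globally.
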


\begin{nota}\label{Nota:W}
Let $W:=\VOA(e,f,e')$ be the subVOA generated by $e, f$ and $e'$. We also denote $h=\tau_e\tau_f$ and $g=\tau_e\tau_{e'}$. Then $g$ and $h$ both have order $3$.  Note also that $e,f,e'\in V_{M+N}$ and thus $W< V_{M+N} \cong V_{A_2\otimes E_8}$.
\end{nota}

\begin{lem}\label{ghcom}
As automorphisms of $W$, $g$ commutes with $h$.
\end{lem}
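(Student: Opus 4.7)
The plan is to verify $gh=hg$ by checking the equality on the VOA generators $e,f,e'$ of $W$; since any VOA automorphism is determined by its action on a generating set, this will suffice. Using the pairwise $U_{3C}$-hypothesis together with Lemma \ref{eiej}, each of $\VOA(e,f)$, $\VOA(e,e')$, $\VOA(f,e')$ contains exactly three Ising vectors; I denote the respective ``third'' members by $e_{\tilde N}:=\tau_e(f)$, $e'':=\tau_e(e')$, and $f':=\tau_f(e')=\tau_{e'}(f)$. Then $h$ acts as the $3$-cycle $e\to f\to e_{\tilde N}\to e$ on the first triple and $g$ as the $3$-cycle $e\to e'\to e''\to e$ on the second.

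At $x=e$ the verification is immediate and pleasantly symmetric:
\[
gh(e)=g(f)=\tau_e\tau_{e'}(f)=\tau_e(f')\quad\text{and}\quad hg(e)=h(e')=\tau_e\tau_f(e')=\tau_e(f'),
\]
using the coincidence $\tau_{e'}(f)=f'=\tau_f(e')$ from the $U_{3C}$-triple $\{f,e',f'\}$. For $x=f$, invoking the standard conjugation identity $\tau_e\tau_f\tau_e=\tau_{\tau_e(f)}=\tau_{e_{\tilde N}}$, the desired equality $gh(f)=hg(f)$ reduces to $\tau_e\tau_{e'}(e_{\tilde N})=\tau_{e_{\tilde N}}(f')$; the case $x=e'$ is entirely analogous after interchanging the roles of $f$ and $e'$.

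To establish this last identity I plan to bring in the explicit lattice description. Since $e'=\rho(e)$ with $\rho=\exp\bigl(\tfrac{2\pi\sqrt{-1}}{3}\tilde{\mathbf a}(0)\bigr)$, the involution $\tau_{e'}$ equals the conjugate $\rho\tau_e\rho^{-1}$, so the required equality will follow once one verifies $\rho\,\tau_{e_{\tilde N}}=\tau_{e_{\tilde N}}\,\rho$ on $V_L$. Using $\tilde{\mathbf a}\in M$ and the $A_2\otimes E_8$-structure of $M+N$, one expects this centralization to hold because the Heisenberg zero-mode $\tilde{\mathbf a}(0)$ preserves the decomposition of $V_L$ along the eigenspaces relevant to the Ising vector $e_{\tilde N}$. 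The main technical obstacle is establishing this centralization rigorously; the backup plan, should a clean conceptual route not be at hand, is to expand $e_{\tilde N}=\tfrac{1}{16}\omega_{\tilde N}+\tfrac{1}{32}\sum_{\alpha\in\tilde N(4)}e^\alpha$ and to verify $gh(x)=hg(x)$ term by term for $x\in\{f,e'\}$, using the explicit cocycle $\varepsilon_0$ and the formulas for $e_M, e_N, e_{\tilde N}$ and $\rho$ from Notation \ref{efg} and Definition \ref{rho}.
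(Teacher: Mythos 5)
Your reduction is set up sensibly: checking $gh=hg$ on the generators $e,f,e'$ suffices, your verification at $x=e$ (both sides equal $\tau_e(f')$ with $f'=\tau_f(e')=\tau_{e'}(f)$) is correct and purely formal, and your identification of the remaining obligation at $x=f$, namely $\tau_e\tau_{e'}(e_{\tilde N})=\tau_{e_{\tilde N}}(f')$, is also correct. The problem is the step you propose to close it with. The claimed centralization $\rho\,\tau_{e_{\tilde N}}=\tau_{e_{\tilde N}}\,\rho$ on $V_L$ is false: for any automorphism $\sigma$ and Ising vector $u$ one has $\sigma\tau_u\sigma^{-1}=\tau_{\sigma(u)}$, so the claim amounts to $\tau_{\rho(e_{\tilde N})}=\tau_{e_{\tilde N}}$. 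But $\rho$ visibly moves $e_{\tilde N}$ (it multiplies $e^{(\al,0,-\al)}$ by $e^{2\pi i\la\mathbf{a},\al\ra/3}$, which is nontrivial for $\al\notin K$), and $e_{\tilde N}$ and $\rho(e_{\tilde N})$ generate a $3C$-algebra, so $\tau_{e_{\tilde N}}(\rho e_{\tilde N})=\rho^2 e_{\tilde N}\neq\rho e_{\tilde N}$ while $\tau_{\rho e_{\tilde N}}$ fixes $\rho e_{\tilde N}$; the two involutions are distinct. Moreover, even if the centralization held, you have not explained how it would yield the identity $\tau_e\tau_{e'}(e_{\tilde N})=\tau_{e_{\tilde N}}(f')$, since you would still need to know $\tau_{e'}(e_{\tilde N})$ and $\tau_{e_{\tilde N}}(f')$ explicitly, and these involve pairs of Ising vectors about which the hypotheses say nothing directly.

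The input you are missing is the one the paper actually uses: by \cite{LYY}, the product $g=\tau_e\tau_{e'}$ is not merely conjugate to something by $\rho$ --- it \emph{equals} $\rho=\exp\bigl(\tfrac{2\pi i}{3}(\mathbf{a},-\mathbf{a},0)(0)\bigr)$ as an automorphism of $V_L$. Consequently $hgh^{-1}=h\rho h^{-1}=\exp\bigl(\tfrac{2\pi i}{3}(0,\mathbf{a},-\mathbf{a})(0)\bigr)$ is another Heisenberg exponential, and $hg(x)=hgh^{-1}\cdot h(x)$ reduces the commutativity on each generator to the statement that $\rho$ and $h\rho h^{-1}$ agree on $e_N$, $e_{\tilde N}$ and their images. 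That agreement is a one-line congruence: $\la(0,\be,-\be),(0,\mathbf{a},-\mathbf{a})\ra=2\la\be,\mathbf{a}\ra\equiv-\la\be,\mathbf{a}\ra=\la(0,\be,-\be),(\mathbf{a},-\mathbf{a},0)\ra\pmod 3$. Your ``backup plan'' of expanding $e_{\tilde N}$ and checking term by term is pointed in the right direction and would eventually reproduce this computation, but as written the proposal's main route rests on a false commutation relation and so does not constitute a proof.
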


\begin{proof}
Recall that $g=\tau_e\tau_{e'} = \rho$ on $V_{L}$ (see \cite{LYY}). Moreover,
$h(e) =f= e_N$ and $h^2(e)= e_{\tilde{N}}$.

By a direct calculation, we have
\[
hg(e)= hgh^{-1} h(e) = \rho^h (e_N),
\]
where $\rho^h = h\rho h^{-1}= \exp\left(\frac{2 \pi i}3 (0 , {\bf a}, -{\bf a})(0)\right)$.

Since $\la (0, \be, -\be), (0 , {\bf a}, -{\bf a})\ra = 2\la \be, {\bf a}\ra$ and
$\la (0, \be, -\be), ({\bf a}, -{\bf a},0)\ra = -\la \be, {\bf a}\ra$, we have
\[
gh(e) =\rho(e_N)=\rho^h(e_N) =hg(e).
\]
Similarly, we have
\[
hg(e') = hg^{2} (e) = (\rho^h)^{2} (e_N), \quad
gh(e') =ghg(e) = g (\rho^h(e_N))=  (\rho^h)^{2} (e_N)
\]
and
\[
hg(f)=hgh(e)=(hgh^{2})h^2(e)= \rho^h (e_{\tilde{N}}) , \quad  gh(f)=g(e_{\tilde{N}})=\rho(e_{\tilde{N}}).
\]
Hence $gh=hg$ on $W$.
\end{proof}

\begin{nota}
For any $0 \leq i,j\leq 2$, denote
\[
e^{i,j}= g^i h^j (e).
\]
In particular, we have
\begin{align*}
&e^{0,0}= e_M, && e^{0,1}=e_N, && e^{0,2}= e_{\tilde{N}},\\
&e^{1,0}= \rho e_M, && e^{1,1}= \rho e_N, && e^{1,2}= \rho e_{\tilde{N}},\\
&e^{2,0}= \rho^2 e_M, &&  e^{2,1}=\rho^2e_N, && e^{2,2}= \rho^2 e_{\tilde{N}}.
\end{align*}
\end{nota}

 \begin{rem}
 By the same methods as in \cite{GL,LYY}, it is also quite straightforward to verify  that $\la e^{i,j}, e^{i'j'}\ra=\frac{1}{2^8}$ whenever  $(i,j)\neq (i',j')$.
 \end{rem}

\begin{lem}
Let $G$ be the subgroup of $Aut(W)$ generated by $\tau_e$, $\tau_f$ and $\tau_{e'}$. Then $G$ has the shape $3^2:2$.
\end{lem}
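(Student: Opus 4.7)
The plan is to write $G = H\la\tau_e\ra$ where $H = \la g, h\ra$, show $H\cong 3^2$, and verify that $\tau_e$ inverts $H$ but lies outside it, so that $G$ is the semidirect product $3^2{:}2$. Because any two of $e, f, e'$ generate a copy of $U_{3C}$, the corresponding pairs of Miyamoto involutions generate copies of $S_3$ in $\aut(W)$; in particular $g = \tau_e\tau_{e'}$ and $h = \tau_e\tau_f$ have order $3$. By Lemma \ref{ghcom} they commute, so $H$ is an abelian group of exponent dividing $3$ with at most two generators, whence $|H|$ divides $9$. To show $|H|=9$, I would look at the orbit of $e^{0,0} = e$ under $H$: by definition $e^{i,j} = g^i h^j(e)$, so this orbit is exactly $\{e^{i,j}\mid 0\leq i,j\leq 2\}$, and the pairwise inner products recorded in the preceding remark ($\la e^{i,j}, e^{i',j'}\ra = 1/2^8$ for $(i,j)\neq(i',j')$ versus $\la e^{i,j}, e^{i,j}\ra = 1/4$) force these nine Ising vectors to be pairwise distinct. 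Hence the orbit has size $9$, giving $|H|\geq 9$ and therefore $H\cong 3^2$.

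Using $\tau_e^2 = 1$, one checks directly $\tau_e g \tau_e = \tau_{e'}\tau_e = g^{-1}$ and similarly $\tau_e h \tau_e = h^{-1}$, so $\tau_e$ normalizes $H$ and acts on it by inversion. It remains to show $\tau_e \notin H$. Since $H$ has exponent $3$, any involution in $H$ is trivial, so $\tau_e \in H$ would force $\tau_e|_W = \id_W$. But by Lemma \ref{1over16}, the vector $e^{0,1} - e^{0,2} \in W$ is a nonzero $1/16$-eigenvector of $e^{0,0}_1 = e_1$, hence lies in $V_e(1/16)\cap W$; thus $\tau_e$ acts as $-1$ on this vector and is nontrivial on $W$. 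Therefore $\tau_e \notin H$, and $G = H\rtimes \la\tau_e\ra$ has order $18$ with $\tau_e$ inverting the normal $3^2$-subgroup $H$, i.e., $G$ has shape $3^2{:}2$.

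The only non-bookkeeping input is the distinctness of the nine Ising vectors $\{e^{i,j}\}$, which is what promotes the a priori bound $|H|\mid 9$ to the equality $|H|=9$; given that and Lemma \ref{ghcom}, the rest is a routine verification that $\tau_e$ supplies the desired order-$2$ outer piece, with nontriviality on $W$ supplied by Lemma \ref{1over16}.
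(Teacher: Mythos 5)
Your proof is correct and follows essentially the same route as the paper: exhibit $G$ as $\la g,h\ra{:}\la\tau_e\ra$ using Lemma \ref{ghcom} for commutativity, check that $\tau_e$ normalizes (indeed inverts) $H=\la g,h\ra$, and verify $\tau_e\notin H$. You supply two details the paper leaves implicit — the orbit/Gram-matrix argument forcing $|H|=9$ rather than $3$, and the use of a $1/16$-eigenvector to see $\tau_e\neq\id_W$ — both of which are sound.
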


\begin{proof}
By Lemma \ref{ghcom}, we know the group $ \langle g, h \rangle$ generated by $ g$ and $h$ is isomorphic to $ 3^2$. We  claim that $ G \cong \langle g, h \rangle : \langle \tau_e \rangle$. For this we need to prove

 1) $ \langle g, h \rangle $ is normal in $ G$;\quad  2) $ G = \langle g, h \rangle \langle \tau_e \rangle$ and\quad  3) $ \langle g, h \rangle \cap \langle  \tau_e \rangle =0$. \\
It is easy to prove 2) and 3). For 1), by Lemma \ref{ghcom} we have $ gh = hg$ and hence $ \tau_f \tau_e \tau_{ e'} = \tau_{ e'} \tau_e \tau_f  $. Thus $ \tau_f  h \tau_f =  \tau_f   \tau_e \tau_{ e'}  \tau_f  = \tau_{ e'} \tau_e \tau_f^2 =  \tau_{ e'} \tau_e = h^2 \in \langle g, h \rangle$. Similar computation gives $ \langle g, h \rangle $ is normal in $ G$.
\end{proof}

\section{Affine VOA $L_{\widehat{sl}_9(\C)}(3,0)$} \label{sec:5}

Recall from Definition \ref{rho} that the sublattice
\[
K=  \{\be\in E_8\mid \la \be, {\bf a}\ra \in 3\Z\} \cong A_8.
\]
Thus, we have an embedding
\[
V_{K\perp K\perp K}\cong V_K\otimes V_K\otimes V_K \hookrightarrow  V_L.
\]
It is also well known\,\cite{FLM} that $V_K\cong V_{A_8}$ is an irreducible level 1 representation of the affine Lie algebra $\widehat{sl}_9(\C)$. Moreover, the weight 1 subspace $(V_K)_1$ is a simple Lie algebra isomorphic to $sl_9(\C)$.

Let $\eta_i:K\to K\perp K\perp K$, $i=1,2,3$, be the embedding of $K$ into the $i$-th direct summand of $K\perp K\perp K$, i.e.,
\[
\eta_1(\al)=(\al,0,0),\quad  \eta_2(\al)=(0, \al,0), \quad  \eta_3(\al)=(0, 0,\al),
\]
for any $\al\in K$.

\begin{nota}\label{hef}
For any $\al\in K(2):=\{ \al\in K\mid \la \al, \al\ra =2\}$, set
\[%begin{equation}\label{hef}
\begin{split}
H_\al &= (\al, \al, \al)(-1) \cdot \mathbbm{1}, \\
E_\al &= e^{\eta_1(\al)} + e^{\eta_2(\al)} +e^{\eta_3(\al)}.
\end{split}
\]%end{equation}
Then $\{H_\al, E_\al\mid \al\in K(2)\}$ generates a subVOA isomorphic to
the affine VOA $L_{\hat{sl}_9(\C)}(3,0)$ in $V_L$ (see \cite[Proposition 13.1]{DL} and \cite{FZ}). Moreover, the Virasoro element
of $L_{\hat{sl}_9(\C)}(3,0)$ is given by
\[
\Omega= \frac{1}{2(3+9)} \left[ \sum_{k=1}^8 (h^k, h^k,h^k)(-1)^2\cdot \mathbbm{1} +
\sum_{\al\in K(2)} (E_\al)_{-1}(-E_{-\al})\right],
\]
where $\{h^1, \dots, h^8\}$ is an orthonormal basis of $K\otimes \C= E_8\otimes \C$. Note that the dual vector of $ E_ \alpha$ is $-E_{ - \alpha}$.

We also denote $E=\{(\al, \al, \al) \mid \al\in E_8\}< L$. Note that
\[
E=Ann_{L}(M+N) :=\{ \be\in L\mid \la \be, \be' \ra =0 \text{ for all } \be'\in M+N\}.
\]
\end{nota}

\begin{lem}
Denote the Virasoro element of a lattice VOA $V_S$ by $\omega_S$. Then we
have
\[
\begin{split}
\Omega&=\omega_E +\frac{3}4 \omega_{M+N} -\frac{1}{12} \sum_{\al\in K(2)\atop 1\leq i,j\leq 3, i\neq j} e^{\eta_i(\al)-\eta_j(\al)},\\
& = \omega_L -\frac{8}9 \sum_{ 0\leq i,j\leq2} e^{i,j}.
\end{split}
\]
\end{lem}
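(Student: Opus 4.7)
The plan is to prove both identities by direct calculation in the lattice VOA $V_L$; the second follows from the first once $\tfrac{8}{9}\sum_{i,j} e^{i,j}$ is put into the same canonical form. No conceptual machinery beyond the explicit vertex operator formulas for $V_L$ is needed.

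For the first identity, substitute $(h^k,h^k,h^k) = \eta_1(h^k)+\eta_2(h^k)+\eta_3(h^k)$ and $E_\al = e^{\eta_1(\al)}+e^{\eta_2(\al)}+e^{\eta_3(\al)}$ into the given formula for $\Omega$. The Cartan piece simplifies because $\{(h^k,h^k,h^k)/\sqrt{3}\}_{k=1}^{8}$ is an orthonormal basis of $E\otimes\C$, giving $\sum_k (h^k,h^k,h^k)(-1)^2\cdot\mathbbm{1}=6\omega_E$. For the root sum, expand each $(e^{\eta_i(\al)})_{-1}e^{-\eta_j(\al)}$ using the standard formula $Y(e^\be,z)e^{\be'}=\varepsilon(\be,\be')z^{\la\be,\be'\ra}E^-(-\be,z)e^{\be+\be'}$. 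For $i=j$, one obtains $-\tfrac12\eta_i(\al)(-1)^2\cdot\mathbbm{1}-\tfrac12\eta_i(\al)(-2)\cdot\mathbbm{1}$ (using $\varepsilon_0(\al,-\al)\equiv 1\bmod 2$ for norm-$2$ vectors); the $\eta_i(\al)(-2)$ pieces cancel under $\al\leftrightarrow-\al$, and summing the $\eta_i(\al)(-1)^2$ terms via the Weyl-invariance identity $\sum_{\al\in K(2)}\al\otimes\al = \frac{\sum_\al\la\al,\al\ra}{\dim\mathfrak{h}}\cdot\id_\mathfrak{h}=18\cdot\id_\mathfrak{h}$ yields $36\omega_L$. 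For $i\neq j$, the cocycle vanishes on disjoint-support pairs, so $(e^{\eta_i(\al)})_{-1}e^{-\eta_j(\al)}=e^{\eta_i(\al)-\eta_j(\al)}$. Collecting, dividing by $24$ and using $\omega_L=\omega_E+\omega_{M+N}$ gives the first stated form.

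For the second identity, it suffices to show that $\omega_L-\Omega = \tfrac14\omega_{M+N}+\tfrac1{12}\sum_{\al,i\neq j}e^{\eta_i(\al)-\eta_j(\al)}$ equals $\tfrac89\sum_{i,j=0}^2 e^{i,j}$. Since $e^{0,0}=e_M$, $e^{0,1}=e_N$, $e^{0,2}=e_{\tilde N}$ and $e^{i,j}=\rho^i(e^{0,j})$, first combine the explicit form $e_S=\tfrac1{16}\omega_S+\tfrac1{32}\sum_{\al\in S(4)}e^\al$ with the sub-identity
\[
\omega_M+\omega_N+\omega_{\tilde N}=\tfrac32\omega_{M+N},
\]
which reduces to checking $P_M+P_N+P_{\tilde N}=\tfrac32\,\id$ on $(M+N)\otimes\C$; this is a short direct computation using the defining constraint $a_1+a_2+a_3=0$ for $(M+N)\otimes\C\subset(E_8\otimes\C)^3$. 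Next, apply $\sum_{i=0}^{2}\rho^i$: the Heisenberg part $\omega_{M+N}$ is $\rho$-invariant, while on $e^\gamma$ the automorphism $\rho$ acts as $\exp(\tfrac{2\pi i}{3}\la\tilde{{\bf a}},\gamma\ra)$. A direct check of $\la\tilde{{\bf a}},(\be,-\be,0)\ra=2\la{\bf a},\be\ra$ (and the analogous computations on $N(4)$ and $\tilde N(4)$) shows that $\la\tilde{{\bf a}},\gamma\ra\in 3\Z$ precisely when the underlying $E_8$-vector of $\gamma$ lies in $K$, so only those $e^\gamma$ with $\gamma=\eta_i(\al)-\eta_j(\al)$ for $\al\in K(2)$, $i\neq j$ survive the average. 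Hence
\[
\sum_{i,j=0}^2 e^{i,j} = \tfrac9{32}\omega_{M+N}+\tfrac3{32}\sum_{\al\in K(2),\,i\neq j}e^{\eta_i(\al)-\eta_j(\al)},
\]
and multiplying by $\tfrac89$ matches $\omega_L-\Omega$ exactly.

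The main obstacle is the sub-identity $\omega_M+\omega_N+\omega_{\tilde N}=\tfrac32\omega_{M+N}$: the three sublattices $M,N,\tilde N$ of $M+N\cong A_2\otimes E_8$ are each isomorphic to $\sqrt{2}E_8$ and span $(M+N)\otimes\C$, but they are pairwise non-orthogonal, so this is not a direct-sum identity and must be verified by the projection calculation. Beyond this, the only delicate point is combinatorial bookkeeping — in particular, tracking that each lattice vector $\eta_i(\al)-\eta_j(\al)$ is produced by exactly two ordered triples $(i,j,\al)$ and $(j,i,-\al)$, which accounts for the apparent factor of two between the Sugawara prefactor $\tfrac{1}{2(3+9)}=\tfrac1{24}$ and the stated coefficient $\tfrac1{12}$.
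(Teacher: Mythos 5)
Your proof is correct and follows essentially the same route as the paper: expand the Sugawara element directly (getting $\frac14\omega_E+\frac34\omega_L$ from the Cartan and diagonal root terms and the exponentials from the cross terms), average $e_M$, $e_N$, $e_{\tilde N}$ over $\langle\rho\rangle$ so that only the $K(2)$-exponentials survive, and reduce everything to the identity $\omega_M+\omega_N+\omega_{\tilde N}=\frac32\omega_{M+N}$. The only minor divergence is that you verify this last identity by the projection computation $P_M+P_N+P_{\tilde N}=\frac32\,\mathrm{id}$ on $(M+N)\otimes\C$, whereas the paper deduces it from the decomposition of $V_{A_2\otimes E_8}$ into copies of $V_{\sqrt2 A_2}$; both are routine, and your explicit bookkeeping of the factor of two coming from the ordered sum over $i\neq j$ matches the paper's convention.
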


\begin{proof}
Let $\{h^1, \dots, h^8\}$ be an orthonormal basis of $A_8\otimes \C= E_8\otimes \C$. Then
\[
\begin{split}
\Omega= & \ \frac{1}{2(3+9)} \left[ \sum_{k=1}^8  (h^k, h^k,h^k)(-1)^2\cdot \mathbbm{1}\right.\\
&\left. \ - \sum_{\al\in K(2)} (e^{\eta_1(\al)} +e^{\eta_2(\al)} +e^{\eta_3(\al)})_{-1} ( e^{-\eta_1(\al)}+e^{-\eta_2(\al)}+e^{-\eta_3(\al)})\right ],\\
=& \frac{1}{24} \left[ 6\omega_E + \sum_{\al\in K(2)} \sum_{i=1}^3 \frac{1}2 ( \eta_i(\al)(-2)\cdot \mathbbm{1}
+ \eta_i(\al)(-1)^2\cdot \mathbbm{1}) \right. \\
& \left. \  - 2\sum_{\al\in K(2) \atop 1\leq i,j\leq 3, i\neq j} e^{\eta_i(\al)-\eta_j(\al)}\right],\\
=& \frac{1}4\omega_E + \frac{18}{24} \omega_L -\frac{1}{12} \sum_{\al\in K(2)\atop 1\leq i,j\leq 3, i\neq j} e^{\eta_i(\al)-\eta_j(\al)}.
\end{split}
\]
Since $\omega_L=\omega_{M+N}+\omega_E$, we have
\[
\Omega =\omega_E +\frac{3}4 \omega_{M+N} -\frac{1}{12} \sum_{\al\in K(2)\atop 1\leq i,j\leq 3, i\neq j} e^{\eta_i(\al)-\eta_j(\al)}.
\]

Now let $ \Delta^i := \{ \beta \in E_8(2) \mid  \langle {\bf{a}}, \beta  \rangle = i \mod 3 \Z\}$ for $ i = 0, 1$ and $ 2$. Note that $ \Delta^0 = K(2)$. Then we have
\begin{align*}
 e^{0,0} = e_M = \frac{1}{ 16} \omega_M + \frac{1}{32} \sum_{ i =0}^2 \sum_{ \alpha \in \Delta^i } e^{( \alpha, - \alpha, 0)}, \\
  e^{1,0} =\rho e_M = \frac{1}{ 16} \omega_M + \frac{1}{32}\sum_{ i =0}^2 \sum_{ \alpha \in \Delta^i} \xi^{2i} e^{( \alpha, - \alpha, 0)}, \\
  e^{2,0} = \rho^2 e_M = \frac{1}{ 16} \omega_M + \frac{1}{32}\sum_{ i =0}^2 \sum_{ \alpha \in \Delta^i } \xi^{ i} e^{( \alpha, - \alpha, 0)}.
\end{align*}
Hence
\[
\sum_{ i=0}^2 e^{i, 0} = ( 1 + \rho + \rho^2) e_M= \frac{3}{ 16} \omega_M +  \frac{3}{32} \sum_{ \alpha \in K(2) } e^{( \alpha, - \alpha, 0)} .
\]
Similar computation gives
\begin{align*}
\sum_{ 0 \le i, j \le 2} e^{i, j } =
\frac{3}{ 16}  ( \omega_M + \omega_N + \omega_ { \tilde{N}}) +
\frac{3}{32}\sum_{\al\in K(2)\atop 1\leq i,j\leq 3, i\neq j} e^{\eta_i(\al)-\eta_j(\al)}.
\end{align*}

Recall that $M+N \cong A_2\otimes E_8$. It contains a full rank sublattice isometric to $(\sqrt{2}A_2)^8$ and hence $\om _{M+N}$ is the sum of the conformal elements of each tensor copy of $V_{\sqrt{2}A_2}^{\otimes 8}$. We also note that the conformal element of the lattice VOA $V_{\sqrt{2}A_2}$ is given by
\[
\begin{split}
\om_{\sqrt{2}A_2} &= \frac{1}6 (\al_1(-1)^2+\al_2(-1)^2+\al_3(-1)^2)\cdot \mathbbm{1}\\
&= \frac{2}3\left( \frac{1}{2}(\frac{\al_1(-1)}{\sqrt{2}})^2+\frac{1}{2}(\frac{\al_2(-1)}{\sqrt{2}})^2
+\frac{1}{2}(\frac{\al_3(-1)}{\sqrt{2}})^2 \right)\cdot \mathbbm{1},
\end{split}
\]
where $\al_1, \al_2, \al_3$ are positive roots of a root lattice type $A_2$ \cite{DLMN}.

Thus $ \omega_{ M +N } = \frac{2}{3} (  \omega_M + \omega_N   + \omega_{ \tilde{N}} )$  and we get
\[
\Omega  = \omega_L -\frac{8}9 \sum_{ 0\leq i,j\leq2} e^{i,j},
\]
as desired.
\end{proof}

\begin{lem}\label{coma8}
For any $0 \leq i, j\leq 2$, we have $e^{i, j}\in \mathrm{Com}_{V_L}\left( L_{\widehat{sl}_9(\C)}(3,0)\right)$. Hence $W\subset \mathrm{Com}_{V_L}\left( L_{\widehat{sl}_9(\C)}(3,0)\right)$.
\end{lem}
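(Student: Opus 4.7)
The commutant $\Com_{V_L}\bigl(L_{\widehat{sl}_9(\C)}(3,0)\bigr)$ consists, by definition, of those $v \in V_L$ with $u_n v = 0$ for every $u \in L_{\widehat{sl}_9(\C)}(3,0)$ and every $n \geq 0$. Since the affine VOA is generated by its weight-$1$ subspace $\{H_\al, E_\al \mid \al \in K(2)\}$, the Borcherds commutator formula reduces membership of $e^{i,j}$ in the commutant to checking $(H_\al)_n\,e^{i,j} = 0$ and $(E_\al)_n\,e^{i,j} = 0$ for every $\al \in K(2)$ and $n \geq 0$.

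The Cartan part is immediate. Because $(\al,\al,\al) \in E = \mathrm{Ann}_L(M+N)$ is orthogonal to $M+N$, and each $e^{i,j} \in V_{M+N}$, the Heisenberg mode $(H_\al)_n = (\al,\al,\al)(n)$ annihilates $e^{i,j}$ for every $n \geq 0$: the zero mode reads off the $(\al,\al,\al)$-charge, which vanishes on $M+N$, while for $n \geq 1$ it annihilates any Fock vector built from directions orthogonal to $(\al,\al,\al)$.

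For the root part I first reduce to the single vector $e_M$ by symmetry. Let $\Gamma \leq \aut(V_L)$ be the subgroup generated by $\rho$ and the lift of the symmetric group $S_3$ permuting the three $E_8$-summands of $L$ (this lift exists because $\varepsilon_0$ was defined symmetrically in the three coordinates). Both generators preserve $L_{\widehat{sl}_9(\C)}(3,0)$: $S_3$ permutes the three terms of each $E_\al$ and fixes each $H_\al$, while $\rho$ fixes every $H_\al$ since $\la\tilde{\mathbf{a}},(\al,\al,\al)\ra = 0$ and fixes every $E_\al$ since $\la\tilde{\mathbf{a}},\eta_i(\al)\ra \in 3\Z$ for $\al \in K$. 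Hence $\Gamma$ preserves the commutant. A direct calculation on lattice vectors shows the $3$-cycle $(1\,2\,3) \in S_3$ sends $e_M \mapsto e_N \mapsto e_{\tilde N}$, coinciding on $e_M$ with the $h$-orbit, and combined with the powers of $\rho$ this gives the identity $e^{i,j} = \rho^i (1\,2\,3)^j\,e_M$ for every $0 \leq i,j \leq 2$. Therefore it suffices to establish $(E_\al)_n\,e_M = 0$ for all $\al \in K(2)$ and $n \geq 0$.

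For this last claim I would compute directly from the formula $e_M = \tfrac{1}{16}\om_M + \tfrac{1}{32}\sum_{\be \in E_8(2)} e^{(\be,-\be,0)}$. The summand $e^{(0,0,\al)}$ of $E_\al$ contributes nothing, since $(0,0,\al) \perp M$ makes $Y(e^{(0,0,\al)},z)$ act by a series in non-negative powers of $z$ on every term of $e_M$. For the remaining two summands the standard lattice-mode formula shows that $(e^{\eta_i(\al)})_n\,e^{(\be,-\be,0)} \neq 0$ only when $\la \eta_i(\al), (\be,-\be,0)\ra \leq -1-n$, restricting $\be$ to the finite sets with $\la \al,\be\ra \in \{-2,-1,1,2\}$ and forcing $n \in \{0,1\}$. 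These finitely many contributions must be matched against the analogously computed action of $(e^{\eta_i(\al)})_n\,\om_M$, which is nonzero for $i = 1,2$ because $\eta_i(\al)$ has non-trivial $M$-projection $\pm(\al,-\al,0)/2$; the prefactors $\tfrac{1}{16}$ and $\tfrac{1}{32}$ are precisely the values that make the resulting Heisenberg-creator contributions cancel. The main obstacle is the sign bookkeeping for the $2$-cocycle $\varepsilon_0$ and the combinatorics of pairings of roots of $K = A_8$ against roots of $E_8$, but no new conceptual input is needed beyond the template of the Ising-vector verification in \cite{DLMN}.
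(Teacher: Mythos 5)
Your argument is correct and is, at its core, the same as the paper's: both reduce membership in the commutant to showing that the generators $H_\al$ and $E_\al$, $\al\in K(2)$, kill the nine Ising vectors under all nonnegative modes, both dispose of the $H_\al$ by the orthogonality of $(\al,\al,\al)$ with $M+N$, and both finish with a mode computation on the explicit expression $e_M=\frac{1}{16}\om_M+\frac{1}{32}\sum_{\be\in E_8(2)}e^{(\be,-\be,0)}$. The one genuine divergence is your symmetry reduction: the paper grinds out $(E_\al)_0 e^{i,j}$ and $(E_\al)_1 e^{i,j}$ for a general $e^{i,j}=\rho^i h^j e_M$, carrying the coordinate permutation $\sigma^j$ and the character $\rho^i$ through the entire calculation, whereas you observe that $\rho$ and the permutation lift of the $3$-cycle fix $H_\al$ and $E_\al$ (the former because $\la\tilde{\mathbf a},\eta_i(\al)\ra\in 3\Z$ for $\al\in K$, the latter because $\varepsilon_0$ is defined coordinate-wise), hence stabilize the commutant, so that only $(E_\al)_n e_M=0$ needs checking. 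That is a cleaner and less error-prone organization and buys a ninefold saving in bookkeeping. Do note, however, that the step you leave as a sketch is precisely the substantive content of the paper's proof: for $n=1$ the $\om_M$-term contributes $\frac{1}{16}\cdot\la\al,\al\ra^2\cdot\frac{1}{8}\,e^{\eta_i(\al)}=\frac{1}{32}e^{\eta_i(\al)}$, while the $\be=\mp\al$ summands contribute $\frac{1}{32}\varepsilon_0(\al,-\al)\,e^{\eta_i(\al)}$, and these cancel exactly because the cocycle was normalized so that $e^{\al}e^{-\al}=-e^{0}$; the $n=0$ case cancels by the same mechanism after the Heisenberg creation operators are inserted. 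Your identification of the contributing roots ($\la\al,\be\ra=\pm1,\pm2$, forcing $n\le 1$) and of the cancellation mechanism is accurate, so there is no gap in the outline, but the sign check against $\varepsilon_0$ must actually be carried out in a complete write-up.
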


\begin{proof}
Since $e^{i,j}\in V_{M+N}$ and $E=\{(\al,\al, \al)\mid \al\in E_8\}$ is orthogonal to $M+N$,
it is clear that $(H_\al)_n e^{i,j} =0 $ for all $n\geq 0$.  It is also clear that $(E_\al)_n e^{i,j}=0 $ for any root $\al\in K$ and $n\geq 2$.

Recall from \cite{FLM} that
\[
Y(e^\al, z) = \exp\left(\sum_{n\in \Z^+} \frac{\al(-n)}{n} z^{n}\right)
\exp\left(\sum_{n\in \Z^+} \frac{\al(n)}{-n} z^{-n}\right) e^\al z^\al.
\]
Now let $\sigma=(123)$ be a 3-cycle. Then by direct calculation,  we have
\[
\begin{split}
&\ \ (E_\al)_1 e^{i,j} =(E_\al)_1 (\rho^ih^j e_M) \\
& =(e^{\eta_1(\al)} + e^{\eta_2(\al)} +e^{\eta_3(\al)})_1 \left( \frac{1}{16} \omega_{h^{j}(M)} +\frac{1}{32} \sum_{\al\in \Delta^+(E_8)} \rho^i (e^{(\eta_{\sigma^{j}(1) } -\eta_{\sigma^{j}(2) })(\al)}  + e^{ -(\eta_{\sigma^{j}(1)} -\eta_{\sigma^{j}(2)})(\al)} ) \right)\\
&= \frac{1}{16} \left(\la \al, \al\ra^2 \frac{1}8(e^{\eta_{\sigma^{j}(1)}(\al)}+e^{\eta_{\sigma^{j}(2)}(\al)})\right) +
\frac{1}{32}\varepsilon(\al, -\al)  (e^{\eta_{\sigma^{j}(1)}(\al)}+e^{\eta_{\sigma^{j}(2)}(\al)}) =0,
\end{split}
\]
and
\[
\begin{split}
& \ \ (E_\al)_0 e^{i,j}\\
 & =(e^{\eta_1(\al)} + e^{\eta_2(\al)} +e^{\eta_3(\al)})_0 \left( \frac{1}{16} \omega_{h^{j}(M)} +\frac{1}{32} \sum_{\al\in \Delta^+(E_8)} \rho^i (e^{(\eta_{\sigma^{j}(1) } -\eta_{\sigma^{j}(2) })(\al)}  + e^{ -(\eta_{\sigma^{j}(1)} -\eta_{\sigma^{j}(2)})(\al)} ) \right)\\
&= \frac{1}{16} \left(\la \al, \al\ra^2 \frac{1}8(\eta_{\sigma^{j}(1) }(\al)(-1)e^{\eta_{\sigma^{j}(1) }(\al)} + \eta_{\sigma^{j}(2)}(\al) (-1)e^{\eta_{\sigma^{j}(2)} (\al)})\right. \\
&\left. \quad - 2\la \al, \al\ra \frac{1}{8}\left(\eta_{\sigma^{j}(1) } -\eta_{\sigma^{j}(2) })(\al)(-1) e^{\eta_{\sigma^{j}(1) }(\al)}
- (\eta_{\sigma^{j}(1) } -\eta_{\sigma^{j}(2) })(\al)(-1) e^{\eta_{\sigma^{j}(2) }(\al)}\right)\right)\\
& \quad  +
\frac{1}{32}\varepsilon(\al, -\al)  (\eta_{\sigma^{j}(2) }(\al)(-1) e^{\eta_{\sigma^{j}(1) }(\al)} +\eta_{\sigma^{j}(1)}(\al)(-1) e^{\eta_{\sigma^{j}(2) }(\al)})\\
&=0
\end{split}
\]
for any root $\al\in K$. Therefore, $(E_\al)_n  e^{i,j} =0$ for all $n\geq 0$.  Since $L_{\widehat{sl}_9(\C)}(3,0)$ is generated by $E_\al$ and $H_\al$, we have the desired conclusion.
\end{proof}

\begin{thm}
Let $W$ be the subVOA generated by $e,f$ and $e'$ in $V_L$. Then $W_1=0$.
\end{thm}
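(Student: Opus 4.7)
The plan is to leverage Lemma \ref{coma8}, which provides the inclusion $W \subset \Com_{V_L}(L_{\widehat{sl}_9(\C)}(3,0))$, and reduce the statement to showing that $\Com_{V_L}(L_{\widehat{sl}_9(\C)}(3,0))_1 = 0$. This lets the affine VOA structure do all the work, avoiding any direct analysis of the infinitely many products among Ising vectors that would be needed to describe $W$ itself.

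For the reduction step, I would recall the standard Lie algebra structure on $(V_L)_1$ coming from the zero mode: since $L = E_8 \perp E_8 \perp E_8$, one has $(V_L)_1 \cong E_8 \oplus E_8 \oplus E_8$ with bracket $[u,v] = u_0 v$. Under this identification, Notation \ref{hef} makes it transparent that the weight one part of $L_{\widehat{sl}_9(\C)}(3,0)$ is the diagonal copy of $A_8 \cong sl_9(\C)$ in $E_8 \oplus E_8 \oplus E_8$, since $H_\alpha = (\alpha,\alpha,\alpha)(-1)\cdot\mathbbm{1}$ and $E_\alpha = e^{\eta_1(\alpha)} + e^{\eta_2(\alpha)} + e^{\eta_3(\alpha)}$ are manifestly $3$-fold diagonal. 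For any weight one element $v = (v_1, v_2, v_3)$ of the commutant, the commutant condition $u_0 v = 0$ for $u$ in this diagonal $A_8$ becomes $[y, v_i] = 0$ for all $y \in A_8$ and each $i$, so each $v_i$ lies in the Lie algebra centralizer $Z_{E_8}(A_8)$.

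The only nontrivial input is then the centralizer computation $Z_{E_8}(A_8) = 0$, which I expect to be the main obstacle, though it is short. It follows from an equal-rank observation: since $K \cong A_8$ has rank $8 = \mathrm{rank}\, E_8$, the subalgebra $A_8 \subset E_8$ contains a Cartan subalgebra of $E_8$, forcing $Z_{E_8}(A_8)$ to lie in that Cartan; but the roots of $A_8$ span the dual of the Cartan, so any element of the centralizer is annihilated by every root and must be zero. Combined with the reduction, this yields $W_1 \subset \Com_{V_L}(L_{\widehat{sl}_9(\C)}(3,0))_1 = 0$, as required. (One does not even need to check the secondary commutant condition $u_1 v \in V_0$, since the zero-mode condition alone already kills $v$.)
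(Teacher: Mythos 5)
Your proof is correct, and it opens with the same move as the paper --- invoking Lemma \ref{coma8} to reduce the claim to showing that $\Com_{V_L}\bigl(L_{\widehat{sl}_9(\C)}(3,0)\bigr)_1=0$ --- but the way you kill that weight-one space is genuinely different. The paper uses only the Heisenberg vectors $h(-1)\vacuum$, $h\in E$, to confine the commutant to $V_{M+N}$, observes that $M+N\cong A_2\otimes E_8$ is rootless so that $(V_{M+N})_1$ consists solely of vectors $h(-1)\vacuum$ with $h\in (M+N)\otimes\C$, and then checks that $\Omega_1$ acts on these as the nonzero scalar $\frac{3}{4}$, using the explicit lattice expression for $\Omega$ established in the preceding lemma. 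You instead work with the Lie algebra $(V_L)_1\cong E_8^{\oplus 3}$ and the fact that the weight-one part of the affine subVOA is the diagonally embedded $sl_9(\C)$ spanned by the $H_\al$ and $E_\al$, reducing everything to the centralizer statement $Z_{E_8}(A_8)=0$, which your equal-rank argument (the Cartan is shared, and the roots of the full-rank sublattice $K\cong A_8$ span the dual of the Cartan) establishes correctly. Both arguments use only necessary conditions for membership in the commutant, which is all that is needed. Your route avoids the rootlessness of $A_2\otimes E_8$ and the computation of $\Omega$ in lattice coordinates, substituting a short piece of finite-dimensional Lie theory; the paper's route has the mild advantage of recycling the formula for $\Omega$ it already needed for Lemma \ref{coma8}, and of exhibiting the eigenvalue $\frac34$ of $\Omega_1$ on $(V_{M+N})_1$, which is of independent use.
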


\begin{proof}
Since $h(-1)\cdot \mathbbm{1} \in L_{\hat{sl}_9(\C)}(3,0)$ for all $h\in E$, the commutant subVOA
\[
 \mathrm{Com}_{V_L}\left( L_{\widehat{sl}_9(\C)}(3,0)\right) \subset V_{M+N}.
\]
Therefore, it suffices to show $W\cap (V_{M+N})_1 =0$.

Recall that $M+N\cong A_2\otimes E_8$ has no roots. Thus,
\[
(V_{M+N})_1= span_\C\{h(-1)\cdot \mathbbm{1} \mid h\in (M+N)\otimes \C\}.
\]
However,
\[
\Omega_1 h(-1)\cdot \mathbbm{1} = \left(\omega_E +\frac{3}4 \omega_{M+N} -\frac{1}{12} \sum_{\al\in K(2)\atop 1\leq i,j\leq 3, i\neq j} e^{\eta_i(\al)-\eta_j(\al)}\right)_1 h(-1)\cdot \mathbbm{1}
= \frac{3}4 h(-1)\cdot \mathbbm{1} \neq 0
\]
for any $0\neq h\in (M+N)\otimes \C$.  Thus, $\left(\mathrm{Com}_{V_L}( L_{\widehat{sl}_9(\C)}(3,0)) \right)\cap  (V_{M+N})_1=0$ and we have $W_1=0$.
\end{proof}

\begin{rem}
Note that the lattice VOA $V_L$ also contains a subVOA isomorphic to $L_{\hat{E}_8 }(3,0)$, the level affine VOA associated to the Kac-Moody Lie algebra of type $E_8^{(1)}$. The central charge of $\Com_{V_L}(L_{\hat{E}_8 }(3,0))$ is $16/11$, which is the same as $U_{3C}$. In fact, it can be shown by the similar calculation as Lemma \ref{coma8} that $e_M$ and $e_N$ defined in Notation \ref{efg} are contained
in $\Com_{V_L}(L_{\hat{E}_8 }(3,0))$. Moreover, $$U_{3C}\cong VOA(e_M,e_N)= \Com_{V_L}(L_{\hat{E}_8 }(3,0)).$$
\end{rem}

\medskip

\subsection{A positive definite real form}
Next we shall show that the Ising vectors $e^{i,j}$, $0\leq i,j\leq 2,$ are contained in a positive definite real form
of $V_{E_8^3}$.

First we recall that the lattice VOA constructed in \cite{FLM} can be defined over $\R$.
Let $V_{L,\R}= S(\hat{\mathfrak{h}}^-_\R)\otimes \R\{L\}$ be the real lattice VOA associated to an even positive definite lattice, where $\mathfrak{h}=\R\otimes_\Z L$, $ \hat{\mathfrak{h}}^-= \oplus_{n\in \Z^+} \mathfrak{h}\otimes \R t^{-n}$. As usual, we use $x(-n)$ to denote $x\otimes t^{-n}$ for $x\in \mathfrak{h}$ and $n\in \Z^+$.

\begin{nota}\label{-1map}
Let $\theta: V_{L,\R} \to V_{L, \R}$ be defined by
\[
\theta( x(-n_1)\cdots x(-n_k)\otimes e^\al) = (-1)^k x(-n_1)\cdots x(-n_k)\otimes e^{-\al}.
\]
Then $\theta$ is an automorphism of $V_{L,\R}$, which is a lift of the $(-1)$-isometry of $L$ \cite{FLM}. We shall denote the $(\pm 1)$-eigenspaces of $\theta$ on $V_{L,\R}$ by $V_{L,\R}^\pm$.
\end{nota}

The following result is well-known \cite{FLM,M}.
\begin{prop}[cf. Proposition 2.7 of \cite{M}]\label{VLpd}
Let $L$ be an even positive definite lattice. Then
the real subspace
$\tilde{V}_{L,\R}=V_{L,\R}^+\oplus \sqrt{-1}V_{L,\R}^-$
is a real form of $V_L$.
Furthermore,
the invariant form on $\tilde{V}_{L,\R}$ is positive definite.
\end{prop}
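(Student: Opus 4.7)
The plan is to verify the two assertions of the proposition in turn: first that $\tilde{V}_{L,\R}$ is a real form of $V_L$ in the sense of the earlier definition, and then that the contragredient form restricts to a positive definite form on it. The key observation driving both steps is that $\theta$ is an order-two VOA automorphism of $V_{L,\R}$ (a lift of the $-1$ isometry of $L$), so $V_{L,\R} = V_{L,\R}^+ \oplus V_{L,\R}^-$ is a $\Z/2$-grading compatible with $Y(\cdot, z)$, and both $\mathbbm{1}$ and $\omega_L$ lie in $V_{L,\R}^+$.

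For the real form claim, I would first verify closure under the vertex operation. Since $\theta$ is a VOA automorphism, if $u \in V_{L,\R}^{\epsilon_u}$ and $v \in V_{L,\R}^{\epsilon_v}$ with $\epsilon_u, \epsilon_v \in \{\pm\}$, then the $n$-th product $u_n v$ lies in $V_{L,\R}^{\epsilon_u \epsilon_v}$. Writing a general element of $\tilde{V}_{L,\R}$ as $u^+ + \sqrt{-1}\, u^-$ with $u^{\pm} \in V_{L,\R}^{\pm}$, and expanding a product of two such vectors into its four parity-homogeneous pieces, the two ``same-parity'' pieces land in $V_{L,\R}^+$ while the two ``mixed-parity'' pieces each acquire exactly one factor of $\sqrt{-1}$ and land in $\sqrt{-1}\, V_{L,\R}^-$. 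So $\tilde{V}_{L,\R}$ is closed under $Y(\cdot,z)$; it contains $\mathbbm{1}$ and $\omega_L$; and the identity $\tilde{V}_{L,\R} \otimes_\R \C = (V_{L,\R}^+ \oplus V_{L,\R}^-)\otimes_\R \C = V_{L,\R}\otimes_\R\C = V_L$ finishes the first assertion.

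For positive definiteness, I plan to exhibit an orthogonal real basis of $\tilde{V}_{L,\R}$ with manifestly positive norms. Fix an orthonormal basis $\{x_1,\dots,x_d\}$ of $L\otimes_\Z \R$ and consider the PBW-type basis vectors
\[
u(P,\alpha) := x_{i_1}(-n_1)\cdots x_{i_k}(-n_k)\otimes e^{\alpha}
\]
of $V_{L,\R}$, indexed by a multi-index $P=(i_1,n_1;\dots;i_k,n_k)$ and $\alpha \in L$. A standard computation using the Heisenberg commutator $[x(n),x(-n)] = n$ and the cocycle identity $e^\alpha e^{-\alpha} = c(\alpha,-\alpha)\cdot e^0$ shows that the contragredient form $\langle\!\langle u(P,\alpha), u(Q,\beta)\rangle\!\rangle$ vanishes unless $\alpha+\beta=0$ and the Heisenberg patterns of $P$ and $Q$ are conjugate, in which case it equals a product of positive integers $n_j$ times a cocycle sign $\pm 1$. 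Since $\theta(u(P,\alpha)) = (-1)^k u(P,-\alpha)$, the real vectors
\[
u(P,\alpha) + \theta(u(P,\alpha))\in V_{L,\R}^+, \qquad \sqrt{-1}\bigl(u(P,\alpha) - \theta(u(P,\alpha))\bigr) \in \sqrt{-1}\,V_{L,\R}^-,
\]
together with the $\theta$-fixed vectors at $\alpha = 0$, span $\tilde{V}_{L,\R}$ and are pairwise orthogonal for the contragredient form. Evaluating the form on each such vector, the factor of $\sqrt{-1}$ in the second type contributes a $(\sqrt{-1})^2 = -1$ that exactly cancels the $(-1)^k$ coming from pairing two $\theta$-antiinvariant vectors, and the standard normalization of the $2$-cocycle in \eqref{epsilon0} (as in \cite{FLM}) ensures $c(\alpha,-\alpha)$ has the correct sign. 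Thus every basis vector has strictly positive norm, and the form is positive definite.

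The main obstacle is the careful sign bookkeeping in the last step: on $V_{L,\R}$ itself the contragredient form is typically indefinite, because pairing $u(P,\alpha)$ with $u(P,-\alpha)$ can produce either sign depending on the parity of $k$ together with $c(\alpha,-\alpha)$. The $\sqrt{-1}$-twist on $V_{L,\R}^-$ in the definition of $\tilde{V}_{L,\R}$ exists precisely to compensate for the $(-1)^k$ from $\theta$-antiinvariance, turning a possibly indefinite real subspace into a positive definite one; verifying that the compensation is exact is the only nontrivial calculation in the proof.
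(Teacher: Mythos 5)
The paper does not prove this proposition at all: it is stated as ``well-known'' and attributed to \cite{FLM} and Proposition 2.7 of \cite{M}, so there is no in-paper argument to compare against. Your reconstruction is the standard proof and is essentially correct: the first half (closure of $V_{L,\R}^+\oplus\sqrt{-1}V_{L,\R}^-$ under all $n$-th products via the $\Z/2$-grading by the automorphism $\theta$, plus $\mathbbm{1},\omega_L\in V_{L,\R}^+$ and the complexification identity) is exactly how one shows $\tilde{V}_{L,\R}$ is a real form, and the second half correctly reduces positivity to a block-by-block computation on the PBW/coset basis. One place where your sign accounting is stated a bit loosely: for $\alpha\neq 0$ the two vectors $u+\theta u$ and $\sqrt{-1}(u-\theta u)$ acquire the \emph{same} norm $2\langle\!\langle u,\theta u\rangle\!\rangle$, so on those blocks the $\sqrt{-1}$-twist does no cancelling; what makes the norm positive is that the $(-1)^k$ in $\theta u=(-1)^k u(P,-\alpha)$ cancels the $(-1)^k$ produced by moving the $k$ Heisenberg modes across the pairing (each $x(n)$ has adjoint $-x(-n)$), leaving $\langle\!\langle e^\alpha,e^{-\alpha}\rangle\!\rangle=(-1)^{\langle\alpha,\alpha\rangle/2}\varepsilon(\alpha,-\alpha)=+1$ by the cocycle normalization \eqref{epsilon0}. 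The $\sqrt{-1}$-twist is genuinely needed only on the $\theta$-anti-invariant part of the $\alpha=0$ (Heisenberg) sector and, more generally, wherever the $\theta$-eigenvalue and the intrinsic sign of the block disagree. Since you do invoke the cocycle normalization explicitly, this is a presentational refinement rather than a gap; your proof is sound and matches the argument in \cite{M}.
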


Now apply the above theorem to the case $L=E_8^3$.  We have the following result.
\begin{prop}
Let $\tilde{V}_{E_8^3,\R}=V_{E_8^3 ,\R}^+\oplus \sqrt{-1}V_{E_8^3,\R}^-$. Then
$\tilde{V}_{E_8^3,\R}$ is a positive definite real form of $V_{E_8^3}$.
\end{prop}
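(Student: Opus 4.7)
The plan is essentially to invoke Proposition \ref{VLpd} directly in the special case $L = E_8^3$. Since $E_8$ is an even positive definite lattice, the orthogonal direct sum $E_8^3 = E_8 \perp E_8 \perp E_8$ is again an even positive definite lattice. Thus $L = E_8^3$ satisfies the hypothesis of Proposition \ref{VLpd}.

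Next, I would verify that the object $\tilde{V}_{E_8^3,\R}$ appearing in the statement is the same object to which Proposition \ref{VLpd} refers. The decomposition $V_{L,\R} = V_{L,\R}^+ \oplus V_{L,\R}^-$ into $\pm 1$-eigenspaces of the lift $\theta$ is defined in Notation \ref{-1map} for any even positive definite lattice $L$, so specializing to $L = E_8^3$ gives precisely the decomposition $V_{E_8^3,\R} = V_{E_8^3,\R}^+ \oplus V_{E_8^3,\R}^-$ used in the statement. Consequently
\[
\tilde{V}_{E_8^3,\R} \;=\; V_{E_8^3,\R}^+ \oplus \sqrt{-1}\, V_{E_8^3,\R}^-
\]
is exactly the subspace of $V_{E_8^3}$ produced by the general construction applied to $L=E_8^3$.

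With this identification in place, both conclusions of Proposition \ref{VLpd} transfer without any further work: $\tilde{V}_{E_8^3,\R}$ is a real form of $V_{E_8^3}$ (containing the vacuum and Virasoro element, and satisfying $V_{E_8^3} = \tilde{V}_{E_8^3,\R} \otimes_{\R} \C$), and the invariant bilinear form $\langle\!\langle\cdot,\cdot\rangle\!\rangle$ restricted to $\tilde{V}_{E_8^3,\R}$ takes real values and is positive definite. There is no genuine obstacle here; the statement is simply a specialization recorded for later use (to place the Ising vectors $e^{i,j}$ inside a positive definite real form of $V_{E_8^3}$, matching the hypothesis on $V$ adopted at the beginning of Section 3).
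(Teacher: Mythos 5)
Your proof is correct and matches the paper exactly: the paper itself introduces the proposition with the sentence ``Now apply the above theorem to the case $L=E_8^3$'' and gives no further argument, so the statement is indeed just the specialization of Proposition \ref{VLpd} to the even positive definite lattice $E_8^3$. Nothing more is needed.
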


The next lemma is clear by the definitions of $e_N, e_N$ and $e_{\tilde{N}}$.
\begin{lem}
The Ising vectors $e_M, e_N$ and $e_{\tilde{N}}$ defined in Notation \ref{efg} are contained
in $V_{E_8^3 ,\R}^+$.
\end{lem}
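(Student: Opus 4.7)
The plan is to verify directly, from the explicit formulas in Notation \ref{efg}, that each of $e_M$, $e_N$, and $e_{\tilde N}$ is fixed by the involution $\theta$ of Notation \ref{-1map} and has real (in fact rational) coefficients, so each of the three vectors lies in $V_{E_8^3,\R}^+ \subset \tilde V_{E_8^3,\R}$.

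First I would handle $e_M = \tfrac{1}{16}\omega_M + \tfrac{1}{32}\sum_{\alpha\in M(4)} e^\alpha$. Writing $\omega_M = \tfrac{1}{2}\sum_i \beta_i(-1)^2\cdot \mathbbm{1}$ for an orthonormal basis $\{\beta_i\}$ of $M\otimes\R$, each summand carries exactly two Heisenberg modes and no $e^\gamma$-factor, so by the formula in Notation \ref{-1map} we get $\theta(\omega_M) = (-1)^2\omega_M = \omega_M$. For the remaining sum, note that $M$ is a sublattice, hence $M(4) = -M(4)$; pairing $\alpha$ with $-\alpha$ and using $\theta(e^\alpha) = e^{-\alpha}$, the sum is manifestly $\theta$-invariant. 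Since the coefficients $\tfrac{1}{16}$ and $\tfrac{1}{32}$ are real, $e_M \in V_{E_8^3,\R}^+$.

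The identical argument applies verbatim to $e_N$ and $e_{\tilde N}$: the lattices $N$ and $\tilde N$ are sublattices (in particular closed under negation), their Virasoro elements are built from Heisenberg modes in the same way from orthonormal bases of $N\otimes\R$ and $\tilde N\otimes\R$, and the sums $\sum_{\alpha\in N(4)} e^\alpha$, $\sum_{\alpha\in \tilde N(4)} e^\alpha$ are $\theta$-invariant by pairing $\alpha\leftrightarrow -\alpha$. Hence both vectors also lie in $V_{E_8^3,\R}^+$.

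There is no serious obstacle — this is a direct verification, which is why the authors simply state the lemma is ``clear.'' The only point worth noting is that the prescription $\theta(e^\alpha) = e^{-\alpha}$ is genuinely compatible with the bilinear 2-cocycle fixed in \eqref{epsilon0}: bilinearity gives $\varepsilon_0(-\alpha,-\beta) = \varepsilon_0(\alpha,\beta)$, so $\theta$ is in fact a VOA automorphism, and the invariance computations above are well defined.
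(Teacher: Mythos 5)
Your verification is correct and is precisely what the paper intends: the authors give no argument at all, stating only that the lemma ``is clear by the definitions,'' and the content of that claim is exactly your check that $\omega_M$ carries an even number of Heisenberg modes (hence is $\theta$-fixed), that $M(4)=-M(4)$ makes $\sum_{\alpha\in M(4)}e^{\alpha}$ invariant under $e^{\alpha}\mapsto e^{-\alpha}$, and that the rational coefficients place everything in the real form; the same applies to $N$ and $\tilde N$. Your side remark that the bilinear cocycle satisfies $\varepsilon_0(-\alpha,-\beta)=\varepsilon_0(\alpha,\beta)$, so that $\theta$ is a genuine automorphism, is a correct and worthwhile precision.
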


Recall the automorphism $\rho= \exp(\frac{2\pi i}3 ({\bf a}, -{\bf a},0)(0))$ defined in Definition \ref{rho}, where ${\bf a}$ is an element of $E_8$ such that
$K= \{\be\in E_8\mid \la \be, {\bf a}\ra \in 3\Z\} \cong A_8.$ Then we have the coset decomposition
\[
E_8= A_8 \cup (b+A_8) \cup (-b +A_8),
\]
where $b$ is a root of $E_8$ such that $\la b, {\bf a}\ra \equiv 1 \mod 3$.

Note that
\[
M=\{ (\al, -\al,0)\mid \al \in E_8\}\cong \sqrt{2}E_8
\]
and
\[
\tilde{K} =\{ (\al, -\al,0)\mid \al \in K\}\cong \sqrt{2}A_8.
\]

Set
\[
\begin{split}
X^0&: = \frac{1}3(e_M + \rho e_M +\rho^2 e_M), \\
X^1&:= \frac{1}3( e_M + \xi \rho e_M +\xi^2 \rho^2 e_M),\\
X^2&:= \frac{1}3(e_M + \xi^2 \rho e_M +\xi \rho^2 e_M) ,\\
\end{split}
\]
where $\xi=\exp{\frac{2 \pi i}3}= \frac{1}2( -1+\sqrt{-3})$.

The next lemma can be proved by the same calculations as in \cite{LYY}. Note that $\rho X^0=X^0$,
$\rho X^1=\xi^2 X^1$ and $\rho X^2= \xi X^2$.

\begin{lem}
The vector $X^0$ is contained in $V_{M,\R}^+$. Moreover,
\[
X^1=\frac{1}{32} \sum_{\gamma\in (b, -b,0)+\tilde{K}\atop \la \gamma, \gamma\ra =4} e^\gamma
\]
and
\[
X^2= \frac{1}{32} \sum_{\gamma\in -(b, -b,0)+\tilde{K}\atop \la \gamma, \gamma\ra =4} e^\gamma.
\]
Therefore, $X^1+X^2\in V_{M,\R}^+$ and $X^1- X^2\in V_{M,\R}^-$.
\end{lem}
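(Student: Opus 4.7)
The plan is to compute each $X^i$ explicitly by isotyping $e_M$ under the order-$3$ automorphism $\rho$, and then to read off the $\theta$-parity directly from the resulting formulas.

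First I would record how $\rho$ acts on the two pieces of $e_M = \frac{1}{16}\omega_M + \frac{1}{32}\sum_{\alpha\in M(4)} e^\alpha$. Since $\rho=\exp(\frac{2\pi i}{3}\tilde{\bf a}(0))$ and $\tilde{\bf a}(0)$ annihilates $\omega_M$ (which is quadratic in Heisenberg modes from $M\otimes\C$), $\rho$ fixes $\omega_M$. For $\alpha=(\beta,-\beta,0)\in M(4)$ we have $\la\tilde{\bf a},\alpha\ra = 2\la{\bf a},\beta\ra$, so $\rho e^\alpha = \xi^{2m}e^\alpha$ where $m\equiv\la{\bf a},\beta\ra\pmod 3$. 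The operators $\frac{1}{3}(1+\rho+\rho^2)$, $\frac{1}{3}(1+\xi\rho+\xi^2\rho^2)$ and $\frac{1}{3}(1+\xi^2\rho+\xi\rho^2)$ producing $X^0,X^1,X^2$ are orthogonal projectors onto the $m=0,1,2$ isotypes; applying them and using $1+\xi+\xi^2=0$ kills the $\omega_M$ contribution in $X^1$ and $X^2$ while keeping it in $X^0$.

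Combining this with the coset decomposition $E_8 = K \sqcup (b+K) \sqcup (-b+K)$ restricted to norm-$2$ vectors, the surviving index set for $X^0$ is $\tilde{K}(4)=\{(\beta,-\beta,0):\beta\in K,\ \la\beta,\beta\ra=2\}$; for $X^1$ it is $\{\gamma\in (b,-b,0)+\tilde{K}:\la\gamma,\gamma\ra=4\}$, and for $X^2$ it is the negative of that. This gives $X^0=\frac{1}{16}\omega_M+\frac{1}{32}\sum_{\alpha\in\tilde{K}(4)}e^\alpha$ together with the two explicit formulas stated in the lemma.

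For the final claim, recall that $\theta$ fixes $\omega_M$ and sends $e^\alpha$ to $e^{-\alpha}$. The set $\tilde{K}(4)$ is closed under negation, so $\theta X^0=X^0$; since the coefficients are real, $X^0\in V_{M,\R}^{+}$. For $X^1$ and $X^2$, negation swaps the cosets $(b,-b,0)+\tilde{K}$ and $-(b,-b,0)+\tilde{K}$, so $\theta X^1 = X^2$ (and $\theta X^2 = X^1$). Combined with the fact that the coefficients $\frac{1}{32}$ are real, this immediately yields $\theta(X^1+X^2)=X^1+X^2$ and $\theta(X^1-X^2)=-(X^1-X^2)$, i.e.\ $X^1+X^2\in V_{M,\R}^{+}$ and $X^1-X^2\in V_{M,\R}^{-}$. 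There is no real obstacle beyond carefully tracking the character sums $1+\xi^{j+2m}+\xi^{2j+m}$ for $j=0,1,2$ and identifying the surviving root sets with the stated cosets of $\tilde{K}$; the rest is bookkeeping.
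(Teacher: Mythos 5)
Your proposal is correct and is essentially the paper's argument: the paper simply asserts the lemma "can be proved by the same calculations as in [LYY]" and records the eigenvalue facts $\rho X^0=X^0$, $\rho X^1=\xi^2X^1$, $\rho X^2=\xi X^2$, while you carry out exactly that projector/character-sum computation on $e_M=\frac{1}{16}\omega_M+\frac{1}{32}\sum_{\al\in M(4)}e^\al$ and then read off the $\theta$-parity. The details all check out (in particular $\rho e^{(\be,-\be,0)}=\xi^{2\la{\bf a},\be\ra}e^{(\be,-\be,0)}$, the identification of the surviving roots with $(\pm b+K)(2)$, and $\theta$ swapping the two cosets), so nothing is missing.
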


\begin{lem}
The Ising vectors $e^{i,j}, 0\leq i,j\leq 2,$ are all contained in $\tilde{V}_{E_8^3,\R}$.
\end{lem}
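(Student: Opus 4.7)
The three Ising vectors $e^{0,0}=e_M$, $e^{0,1}=e_N$, $e^{0,2}=e_{\tilde N}$ already lie in $V_{E_8^3,\R}^+\subset\tilde V_{E_8^3,\R}$ by the preceding lemma, so the task reduces to placing the six vectors $\rho^i e_M$, $\rho^i e_N$, $\rho^i e_{\tilde N}$ ($i=1,2$) in $\tilde V_{E_8^3,\R}$. The plan is to decompose each of $e_M,e_N,e_{\tilde N}$ into $\rho$-eigenvectors, identify the $\theta$-action on these eigenvectors, and then read off membership in $V^+\oplus\sqrt{-1}\,V^-$ via the identities $\xi+\xi^2=-1$ and $\xi-\xi^2=\sqrt{-3}$.

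For $e_M$, the decomposition $e_M=X^0+X^1+X^2$ with $\rho X^0=X^0$, $\rho X^1=\xi^2 X^1$, $\rho X^2=\xi X^2$ is at hand, and the preceding lemma supplies the crucial facts $X^0\in V_{M,\R}^+$, $X^1+X^2\in V_{M,\R}^+$, and $X^1-X^2\in V_{M,\R}^-$. The computation
\[
\rho e_M=X^0+\xi^2 X^1+\xi X^2=X^0-\tfrac12(X^1+X^2)-\tfrac{\sqrt{-3}}{2}(X^1-X^2)
\]
exhibits $\rho e_M$ as a sum of an element of $V_{M,\R}^+$ and an element of $\sqrt{-1}\,V_{M,\R}^-$, so $\rho e_M\in\tilde V_{E_8^3,\R}$; the vector $\rho^2 e_M$ is the complex conjugate combination and is handled identically.

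For $e_N$ and $e_{\tilde N}$ I would introduce the analogous averages
\[
Y^k:=\tfrac13\sum_{j=0}^{2}\xi^{jk}\rho^j e_N,\qquad Z^k:=\tfrac13\sum_{j=0}^{2}\xi^{jk}\rho^j e_{\tilde N},\qquad k=0,1,2,
\]
and prove the verbatim analog of the preceding lemma: $Y^0,Z^0,Y^1+Y^2,Z^1+Z^2\in V_{E_8^3,\R}^+$ and $Y^1-Y^2,Z^1-Z^2\in V_{E_8^3,\R}^-$. The inputs are the same ones used for $M$: (i) $N$ and $\tilde N$ are $\sqrt 2\,E_8$-sublattices; (ii) $\rho$ preserves each of $V_N$, $V_{\tilde N}$ because it acts diagonally in the lattice grading by the scalar $\xi^{\langle\alpha,\tilde{\bf a}\rangle}$; (iii) $\theta$ fixes $e_N$ and $e_{\tilde N}$ and satisfies $\theta\rho\theta^{-1}=\rho^{-1}$, so that $\theta$ exchanges the $\xi$- and $\xi^2$-eigenspaces; and (iv) the coset decomposition $E_8=A_8\cup(b+A_8)\cup(-b+A_8)$ transfers to $N$ and $\tilde N$ via the obvious isomorphisms. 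The same manipulation $\rho^i e_N=Y^0+\xi^{2i}Y^1+\xi^i Y^2$ (and its $\tilde N$-analog) then yields the desired membership.

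The only step requiring any thought is the relation $\theta\rho\theta^{-1}=\rho^{-1}$, which follows by comparing scalars: on $e^\alpha$ one has $\theta\rho\theta^{-1}(e^\alpha)=\theta(\xi^{-\langle\alpha,\tilde{\bf a}\rangle}e^{-\alpha})=\xi^{-\langle\alpha,\tilde{\bf a}\rangle}e^\alpha$. Everything else is routine bookkeeping with cube roots of unity, and no genuinely new calculation beyond the preceding lemma is required.
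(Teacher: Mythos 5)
Your proof is correct and takes essentially the same route as the paper: decompose $e_M$ into the $\rho$-eigencomponents $X^0,X^1,X^2$, invoke the preceding lemma to place $X^0$ and $X^1+X^2$ in $V_{M,\R}^+$ and $X^1-X^2$ in $V_{M,\R}^-$, and read off $\rho^i e_M\in \tilde V_{E_8^3,\R}$ from $\xi+\xi^2=-1$ and $\xi-\xi^2=\sqrt{-3}$ (the sign in front of $\sqrt{-3}(X^1-X^2)$ differs from the paper's displayed formula, but this is immaterial since either sign lands in $\sqrt{-1}\,V_{M,\R}^-$). The paper dismisses the remaining six vectors with a one-word ``similarly,'' whereas you spell out the analogous eigencomponents for $N$ and $\tilde N$ and the underlying relation $\theta\rho\theta^{-1}=\rho^{-1}$; this is a welcome elaboration, not a different argument.
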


\begin{proof}
By the discussion above, we have
\[
 \rho e_M = X^0 - \frac{1}2 (X^1+X^2) + \frac{1}2 \sqrt{-3} (X^1-X^2).
\]
Since $X^1+X^2\in V_{M,\R}^+$ and $X^1- X^2\in V_{M,\R}^-$, we have $\rho e_M \in \tilde{V}_{E_8^3,\R}$. Similarly, we also have $\rho^2 e_M, \rho e_N, \rho^2 e_N, \rho e_{\tilde{N}}, \rho^2 e_{\tilde{N}} \in \tilde{V}_{E_8^3,\R}$ as desired.
\end{proof}

\section{Parafermion VOA and $W$}

In this section, we shall show that the VOA $W$ defined in Notation \ref{Nota:W} is, in fact, equal to the commutant subVOA $\tilde{W}=\mathrm{Com}_{V_L}\left( L_{\widehat{sl}_9(\C)}(3,0)\right)$.
Recall from \cite{Lam2} that the lattice VOA $V_{A_3^8}$ contains a full subVOA $K(sl_3(\C),9)\otimes L_{\hat{sl}_9(\C)}(3,0)$, where
$K(sl_3(\C),9)$ is the parafermion VOA associated to the affine VOA $L_{\hat{sl}_3(\C)}(9,0)$.  Therefore, the VOA $\tilde{W}$ contains a full subVOA isomorphic to the parafermion VOA $K(sl_3(\C),9)$.

\subsection{Parafermion VOA}
First, we recall the definition of parafermion VOA from \cite{DW} (cf. \cite{DLY,DLWY}).

Let  $\mathfrak{g}$ be a finite dimensional simple Lie algebra
and $\widehat{\mathfrak{g}}$ the affine Kac-Moody Lie algebra
associated with $\mathfrak{g}$.  Let $\Pi=\{\al_1, \dots, \al_n\}$ be a set of simple roots and $\theta$ the highest root. Let $Q$ be the root lattice
of $\mathfrak{g}$.  For any positive integer $k$, we denote
\[
P_+^k(\mathfrak{g})= \{ \Lambda \in \Q\otimes_\Z Q\mid \la \al_i, \Lambda\ra \in \Z_{\geq 0} \text{ for all } i=1, \dots,n \text{ and } \la \theta, \Lambda\ra \leq k\},
\]
the set of dominant integral weights for $\mathfrak{g}$ with level $k$.

Let $L_{\widehat{\mathfrak{g}}}(k,\Lambda)$ be the
irreducible module of $\hat{\mathfrak{g}}$ with highest weight $\Lambda$ and level $k$.
Then $L_{\widehat{\mathfrak{g}}}(k,0)$ forms a simple VOA with the Virasoro element given by the Sugawara construction
\begin{equation}\label{suga}
\Omega_{\mathfrak{g},k}= \frac{1}{2 ( k +h^\vee)} \sum (u_i)_{-1}u^i,
\end{equation}
where $h^\vee$ is the dual Coxeter number, $\{u_i\}$ is a  basis of $\mathfrak{g}$ and
$ \{ u^i:= ( u_i) ^\ast   \}$ is the dual basis of $ \{ u_i \}$ with respect to the normalized Killing form (see \cite{FZ}). Moreover, the central charge of $L_{\widehat{\mathfrak{g}}}(k,0)$ is
\begin{equation}\label{cc}
\frac{k\dim\mathfrak{g}}{k+h^\vee}.
\end{equation}

 The vertex operator algebra
$L_{\widehat{\mathfrak{g}}}(k,0)$ contains a Heisenberg vertex
operator algebra corresponding to a Cartan subalgebra $\mathfrak{h}$
of $\mathfrak{g}$.
Let $M_{\hat{\mathfrak{h}}}(k, 0)$ be the vertex operator subalgebra of $L_{\widehat{\mathfrak{g}}}(k,0)$ generated by $h(-1)\cdot \mathbbm{1}$ for $h\in{\mathfrak{h}}$. The commutant $K(\mathfrak{g},k)$ of $M_{\hat{\mathfrak{h}}}(k, 0)$ in
$L_{\widehat{\mathfrak{g}}}(k,0)$ is called a parafermion vertex
operator algebra.

The VOA $L_{\widehat{\mathfrak{g}}}(k,0)$ is
completely reducible as an $M_{\hat{\mathfrak{h}}}(k, 0)$-module and we have a decomposition (see \cite{DW}).
\begin{lem}\label{Klambda}
For any $\lambda\in \mathfrak{h}^*$, let $M_{\hat{\mathfrak{h}}}(k, \lambda)$ be the irreducible highest weight module for $\hat{\mathfrak{h}}$ with a highest weight vector $v_\lambda$ such that $h(0)v_\lambda = \lambda(h)v_\lambda$ for $h\in \mathfrak{h}$. Denote
\[
K_{\mathfrak{g},k}(\lambda)= K_{\mathfrak{g},k}(0, \lambda) = \{v\in  L_{\widehat{\mathfrak{g}}}(k,0)\mid h(m)v =\lambda(h)\delta_{m,0} v \text{ for } h\in \mathfrak{h}, m\geq 0\}.
\]
Then we have
\[
L_{\widehat{\mathfrak{g}}}(k,0) =\bigoplus_{\lambda\in Q} K_{\mathfrak{g},k}(\lambda) \otimes M_{\hat{\mathfrak{h}}}(k, \lambda),
\]
where  $Q$ is the root lattice of $\mathfrak{g}$.
\end{lem}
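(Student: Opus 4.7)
The plan is to view $L_{\widehat{\mathfrak{g}}}(k,0)$ as a module for the Heisenberg subalgebra $M_{\hat{\mathfrak{h}}}(k,0)$ and to decompose it accordingly. First I would verify that the zero-mode action of $\mathfrak{h}$ is diagonalizable with spectrum contained in the root lattice $Q$. The vacuum $\mathbbm{1}$ has weight $0$, and $L_{\widehat{\mathfrak{g}}}(k,0)$ is spanned by monomials $x_{1}(-n_{1})\cdots x_{r}(-n_{r})\cdot \mathbbm{1}$ with $n_{i}>0$, where each $x_{i}$ lies in $\mathfrak{h}$ (weight $0$) or is a root vector $e_{\alpha}$ with $\alpha\in Q$. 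Applying such a monomial shifts the $\mathfrak{h}$-weight by an element of $Q$, so every $h(0)$-eigenvalue lies in $Q$, giving the weight-space decomposition $L_{\widehat{\mathfrak{g}}}(k,0)=\bigoplus_{\lambda\in Q}L^{\lambda}$.

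Since $[h(m),h'(0)]=0$ for all $h,h'\in\mathfrak{h}$ and $m\in\Z$, each $L^{\lambda}$ is stable under the full Heisenberg algebra $\hat{\mathfrak{h}}$. I would then invoke the standard fact that a $\Z_{\geq 0}$-graded $\hat{\mathfrak{h}}$-module of level $k>0$ whose graded pieces are finite dimensional is completely reducible, and decomposes into irreducible highest-weight Heisenberg modules $M_{\hat{\mathfrak{h}}}(k,\mu)$. On $L^{\lambda}$ the charge $\mu$ is forced to equal $\lambda$, hence
\[
L^{\lambda}\;\cong\;K_{\mathfrak{g},k}(\lambda)\otimes M_{\hat{\mathfrak{h}}}(k,\lambda),
\]
where the multiplicity space is exactly the subspace of $L^{\lambda}$ annihilated by all positive modes $h(m)$, $m>0$; this matches the definition of $K_{\mathfrak{g},k}(\lambda)$. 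Summing over $\lambda\in Q$ yields the claimed decomposition.

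The main obstacle is the complete-reducibility step, that is, showing that every vector in $L^{\lambda}$ is a sum of $\hat{\mathfrak{h}}^{-}$-descendants of Heisenberg-highest-weight vectors. This is handled by a downward induction on the $L(0)$-grading: given $v\in L^{\lambda}$, if $v$ is not already killed by all positive Heisenberg modes, one writes $v=v_{0}+\sum_{j}h_{j}(-n_{j})w_{j}$ with $v_{0}$ a Heisenberg-highest-weight vector and the $w_{j}$ of strictly smaller $L(0)$-degree, then applies induction. Finite-dimensionality of the graded pieces and the lower bound on $L(0)$ ensure termination, and the level $k>0$ guarantees that the resulting submodule generated by each Heisenberg-highest-weight vector of charge $\lambda$ is isomorphic to $M_{\hat{\mathfrak{h}}}(k,\lambda)$ (rather than a proper quotient), completing the argument.
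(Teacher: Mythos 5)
Your argument is correct, and it is worth noting that the paper itself offers no proof of this lemma: it is quoted from Dong--Wang \cite{DW} as a known structural fact about parafermion decompositions. What you have written is essentially the standard proof that the cited reference (and ultimately the classical theory) supplies. The two key ingredients are exactly the ones you isolate: (i) the $h(0)$-spectrum of the vacuum module lies in $Q$ and is diagonalizable, since the module is spanned by monomials in root vectors and Cartan modes applied to $\mathbbm{1}$, and the zero modes commute with all $h(m)$, so each charge space $L^{\lambda}$ is $\hat{\mathfrak{h}}$-stable; (ii) complete reducibility of a suitably graded $\hat{\mathfrak{h}}$-module at nonzero level, which is the Heisenberg (Stone--von Neumann type) theorem, e.g.\ Theorem 1.7.3 of \cite{FLM}. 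One small point you gloss over in the inductive sketch: besides showing that every vector is a descendant of vacuum-space vectors (surjectivity of $S(\hat{\mathfrak{h}}^-)\otimes K_{\mathfrak{g},k}(\lambda)\to L^{\lambda}$) and that each cyclic submodule is the full Fock module, you also need the sum over a basis of $K_{\mathfrak{g},k}(\lambda)$ to be direct, i.e.\ injectivity of that map; this follows from the nondegeneracy of the contravariant form on the Fock space at level $k\neq 0$ and is part of the standard theorem you invoke, so it is not a genuine gap, merely a step to acknowledge explicitly. With that caveat your proof is complete and is the expected argument behind the citation.
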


Similarly, for any $\Lambda\in P_+^k(\mathfrak{g})$, we also have the decomposition.
\begin{lem}\label{KLl}
Denote $
K_{\mathfrak{g},k}(\Lambda, \lambda)=  \{v\in  L_{\widehat{\mathfrak{g}}}(k,\Lambda)\mid h(m)v =\lambda(h)\delta_{m,0} v \text{ for } h\in \mathfrak{h}, m\geq 0\}.
$ Then
\[
L_{\widehat{\mathfrak{g}}}(k,\Lambda) =\bigoplus_{\lambda\in \Lambda + Q} K_{\mathfrak{g},k}(\Lambda, \lambda) \otimes M_{\hat{\mathfrak{h}}}(k, \lambda).
\]
\end{lem}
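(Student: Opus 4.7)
The plan is to decompose $L_{\widehat{\mathfrak{g}}}(k,\Lambda)$ with respect to the Heisenberg subalgebra $\hat{\mathfrak{h}} = \mathfrak{h}\otimes \C[t,t^{-1}] \oplus \C k$ inside $\widehat{\mathfrak{g}}$, in a manner completely parallel to Lemma \ref{Klambda}. The argument proceeds in three movements: first a weight decomposition for the zero mode $\mathfrak{h} = \mathfrak{h}(0)$, then a Heisenberg decomposition using that the level $k$ is positive, and finally the identification of the multiplicity space with $K_{\mathfrak{g},k}(\Lambda,\lambda)$.

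First I would note that $L_{\widehat{\mathfrak{g}}}(k,\Lambda)$ admits a simultaneous eigenspace decomposition for the commuting operators $\{h(0) \mid h\in \mathfrak{h}\}$, since each $h(0)$ acts semisimply by integrability. Because the highest weight vector $v_\Lambda$ has $\mathfrak{h}$-weight $\Lambda$, and because $L_{\widehat{\mathfrak{g}}}(k,\Lambda)$ is generated from $v_\Lambda$ by applying negative-mode root vectors of $\widehat{\mathfrak{g}}$ (which shift the $\mathfrak{h}$-weight by $-\alpha$ for a root $\alpha\in Q$) together with elements of $\hat{\mathfrak{h}}^-$ (which preserve the $\mathfrak{h}$-weight), the set of $\mathfrak{h}$-weights occurring in $L_{\widehat{\mathfrak{g}}}(k,\Lambda)$ is contained in $\Lambda+Q$.

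Next, restricted to $\hat{\mathfrak{h}}$, the module $L_{\widehat{\mathfrak{g}}}(k,\Lambda)$ has central element acting as the positive integer $k$ and the negative-mode oscillators $h(-n)$, $n>0$, act locally freely (this is the standard consequence of integrability, which implies $L_{\widehat{\mathfrak{g}}}(k,\Lambda)$ is a positive-energy $\hat{\mathfrak{h}}$-module with finite-dimensional $L(0)$-graded pieces on each $\mathfrak{h}$-weight space). By the standard Stone--von Neumann type theorem for the Heisenberg algebra at positive level, any such $\hat{\mathfrak{h}}$-module is completely reducible, and each irreducible summand is isomorphic to $M_{\hat{\mathfrak{h}}}(k,\lambda)$ for some $\lambda$ determined by the $\mathfrak{h}$-weight of its highest weight vector.

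Finally, the multiplicity space of $M_{\hat{\mathfrak{h}}}(k,\lambda)$ is precisely the space of vectors of $\mathfrak{h}$-weight $\lambda$ annihilated by $h(m)$ for all $h\in\mathfrak{h}$ and $m>0$, which is $K_{\mathfrak{g},k}(\Lambda,\lambda)$ by definition. Combining the decompositions across all weights $\lambda$ yields
\[
L_{\widehat{\mathfrak{g}}}(k,\Lambda) = \bigoplus_{\lambda\in\Lambda+Q} K_{\mathfrak{g},k}(\Lambda,\lambda)\otimes M_{\hat{\mathfrak{h}}}(k,\lambda),
\]
and the $\hat{\mathfrak{h}}$-module $M_{\hat{\mathfrak{h}}}(k,\lambda)$ is nonzero only when $\lambda\in \Lambda+Q$, as argued above. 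The main potential obstacle is justifying the complete reducibility of the $\hat{\mathfrak{h}}$-action: one has to check that the positive modes of $\hat{\mathfrak{h}}$ act locally nilpotently and that the module decomposes into its generalized highest-weight subspaces. Both follow from the standard integrable structure of $L_{\widehat{\mathfrak{g}}}(k,\Lambda)$ and were verified for the vacuum case $\Lambda=0$ in \cite{DW}; the extension to general $\Lambda\in P_+^k(\mathfrak{g})$ is formally identical, since the argument only uses the $\hat{\mathfrak{h}}$-module structure and not the specific choice of highest weight.
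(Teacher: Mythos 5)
Your proposal is correct and supplies exactly the standard argument that the paper leaves implicit: the paper states this lemma without proof, as the evident analogue of Lemma \ref{Klambda} cited to [DW], and the intended justification is precisely the Heisenberg (Stone--von Neumann) decomposition at positive level that you spell out. The only cosmetic slip is in your last sentence, where you mean that the multiplicity space $K_{\mathfrak{g},k}(\Lambda,\lambda)$ (not the Fock module $M_{\hat{\mathfrak{h}}}(k,\lambda)$) vanishes unless $\lambda\in\Lambda+Q$; the substance of the argument is unaffected.
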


\medskip

\subsection{A generating set}
In \cite{DW}, it is shown that the parafermion VOA $K(\mathfrak{g}, k)$ is generated by subVOAs isomorphic to $K(sl_2(\C), k)$. We first give a brief review of their work.

Let $\mathfrak{h}$ be a Cartan subalgebra of $\mathfrak{g}$ and let $\Delta_+$ be the set of all positive roots of $\mathfrak{g}$. Then
\[
\mathfrak{g}= \mathfrak{h}\oplus \bigoplus_{\al\in \Delta_+} (\C x_\al \oplus \C x_{-\al}),
\]
where $x_{\pm \al} \in \mathfrak{g}_{\pm\al} =\{ u\in\mathfrak{g}\mid [h,u]=\pm \al(h) u \text{ for all } h\in \mathfrak{h}\}$.

\begin{nota}\label{Pal}
For any $\al\in \Delta_+$, let  $h_\al=[x_\al, x_{-\al}]$.  Then $S_\al=span\{h_\al, x_\al, x_{-\al}\}$ is a Lie subalgebra of $\mathfrak{g}$ isomorphic to $sl_2(\C)$. Define
\[
\omega_\al  = \frac{1}{ 2k(k + 2)} (kh_\al(-2)1- h_\al(-1)2\mathbbm{1} + 2kx_\al(-1)x_{-\al}(-1)\mathbbm{1})
\]
and
\[
\begin{split}
W^3_\al = &k^2h_\al(-3)\mathbbm{1} + 3kh_\al(-2)h_\al(-1)\mathbbm{1} + 2h_\al(-1)^3\mathbbm{1}-
6kh_\al(-1)x_\al(-1)x_\al(-1)\mathbbm{1}\\
& +3k^2x_\al(-2)x_\al(-1)\mathbbm{1}-  3k^2x_\al(-1)x_\al(-2)\mathbbm{1}.
\end{split}
\]

We use  $P_\al$ to denote the vertex operator subalgebra of $K(\mathfrak{g}, k)$ generated by $\om_\al$ and $W^3_\al$ for $\al\in \Delta_+$.
\end{nota}

The next theorem is proved in \cite{DW}.

\begin{thm} [Theorem 4.2 of \cite{DW}]\label{dwtheorem}
The simple vertex operator algebra $K(\mathfrak{g}, k)$ is generated by $P_\al$, $\al\in \Delta_+$ and
$P_\al$ is a simple vertex operator algebra isomorphic to the parafermion vertex operator algebra $K(sl_2(\C), k)$ associated to $sl_2(\C)$.
\end{thm}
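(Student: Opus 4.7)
The plan is to split the theorem into its two assertions and address each in turn. For the first, that $P_\al$ is a simple VOA isomorphic to $K(sl_2(\C), k)$, I would begin from the $sl_2$-triple $S_\al = \Span\{h_\al, x_\al, x_{-\al}\}$. The currents $h_\al(z)$, $x_{\pm \al}(z)$ inside $L_{\widehat{\mathfrak{g}}}(k,0)$ generate an affine subVOA isomorphic to $L_{\widehat{S}_\al}(k_\al,0)$, where $k_\al$ is the level induced on $\widehat{S}_\al$ by the normalized Killing form (so $k_\al = k$ for a long root). The commutant of the rank-one Heisenberg generated by $h_\al(-1)\vacuum$ inside $L_{\widehat{S}_\al}(k_\al,0)$ is by definition the parafermion VOA $K(sl_2(\C), k_\al)$. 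A direct OPE computation then shows that $\om_\al$ and $W^3_\al$ from Notation \ref{Pal} agree, up to normalization, with the standard Virasoro element and the weight-$3$ primary generator of $K(sl_2(\C), k)$ constructed in \cite{DLY,DLWY,DW}; since those two elements are known to generate the simple VOA $K(sl_2(\C), k)$, the same must hold for $P_\al$.

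For the second assertion, that $K(\mathfrak{g},k)$ is generated by $\{P_\al \mid \al \in \Delta_+\}$, let $\mathcal{U}$ denote the subVOA they generate. By Lemma \ref{Klambda}, $K(\mathfrak{g}, k)$ is the zero $\mathfrak{h}$-charge component of $L_{\widehat{\mathfrak{g}}}(k,0)$, spanned by normally-ordered polynomials in the currents whose total $\mathfrak{h}$-weight vanishes. Diagonal monomials $x_\al(-m)x_{-\al}(-n)\vacuum$, after subtracting appropriate Heisenberg contributions, project into $P_\al$; mixed-root expressions $x_\al(-m)x_{-\be}(-n)\vacuum$ with $\al \neq \be$ can be produced by applying zero modes of root currents to diagonal monomials, since the brackets $[x_\al(0), x_{-\be}(-n)] = N_{\al,-\be}\, x_{\al-\be}(-n)$ link different root strings. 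Iterating on conformal weight, every element of the charge-zero subspace can in principle be written in terms of elements drawn from the $P_\al$.

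The main difficulty is making this induction rigorous, since ``producing by zero modes and iterated products'' is an open-ended operation. A clean route, which I expect is followed in \cite{DW}, exploits the simplicity of the parafermion VOA $K(\mathfrak{g}, k)$. Since $\mathcal{U}$ is a nonzero subVOA of the simple VOA $K(\mathfrak{g},k)$, once one verifies that $\mathcal{U}$ contains the Virasoro element of $K(\mathfrak{g}, k)$ (obtained as a suitable combination of the $\om_\al$ corrected by Heisenberg terms, reflecting that the rank-one Heisenberg subalgebras used in defining the $P_\al$ overlap inside $M_{\hat{\mathfrak{h}}}(k,0)$), a graded-character comparison against the decomposition in Lemmas \ref{Klambda} and \ref{KLl} forces $\mathcal{U} = K(\mathfrak{g}, k)$. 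The identification of the parafermion Virasoro element in terms of the $\om_\al$ and the subsequent character computation constitute the technical crux of the argument.
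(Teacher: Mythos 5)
First, a point of reference: the paper does not prove this statement at all --- it is quoted verbatim as Theorem 4.2 of \cite{DW}, so there is no internal proof to compare your attempt against. I will therefore judge your proposal against the argument actually given in \cite{DW} (and in \cite{DLY,DLWY} for the rank-one and type-$A$ ingredients). Your first paragraph, identifying $P_\al$ with the simple parafermion VOA $K(sl_2(\C),k)$ via the affine subVOA attached to the $sl_2$-triple $S_\al$ and matching $\om_\al$, $W^3_\al$ with the known generators, is essentially the correct and standard route (modulo the caveat, which you flag, that the induced level equals $k$ only for long roots; this is harmless here since the application is to $sl_3$).

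The generation argument, however, has two genuine gaps. (i) You propose to reach the mixed monomials $x_\al(-m)x_{-\be}(-n)\vacuum$ by ``applying zero modes of root currents''. But $x_\gamma(0)$ is a mode of $x_\gamma(-1)\vacuum$, which is not an element of $\mathcal{U}$ --- it is not even an element of $K(\mathfrak{g},k)$, since it carries nonzero $\mathfrak{h}$-charge. A vertex subalgebra is closed only under modes of its \emph{own} elements, so this operation takes you outside $\mathcal{U}$ and establishes nothing about membership in $\mathcal{U}$. Since this is the step that would carry the entire induction on conformal weight, the induction collapses. (ii) The fallback via simplicity does not repair this: simplicity of $K(\mathfrak{g},k)$ rules out proper \emph{ideals}, not proper subalgebras, and a simple VOA has many proper full subVOAs containing its Virasoro element (fixed-point subalgebras $V^G$, or $\vir(\om)$ itself). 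A graded-character comparison would indeed suffice, but you offer no mechanism for bounding the character of $\mathcal{U}$ from below --- that lower bound \emph{is} the theorem. (Note also that $\om_{K(\mathfrak{g},k)}$ is not simply $\sum_\al \om_\al$, so even the containment of the Virasoro element in $\mathcal{U}$ requires an argument.) The proof in \cite{DW} avoids both difficulties by working inside $L_{\widehat{\mathfrak{g}}}(k,0)$ and inducting on the number of root-vector factors in a spanning monomial of the charge-zero subspace, using only modes of the charge-zero quadratics $x_\al(-1)x_{-\al}(-1)\vacuum$, which by the definition of $\om_\al$ lie in $M_{\hat{\mathfrak{h}}}(k,0)_2+(P_\al)_2$; these strip off root-vector factors two at a time without ever leaving the subalgebra generated by $M_{\hat{\mathfrak{h}}}(k,0)$ and the $P_\al$. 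If you want to salvage your outline, this is the replacement for your zero-mode step.
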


\subsection{Lattice VOA $V_{A_n^{k+1}}$} Next we recall an embedding of the VOA $$K(sl_{k+1}, n+1)\otimes L_{\hat{sl}_{n+1}(\C)}(k+1, 0)$$ into the lattice VOA $V_{A_n^{k+1}}$ from \cite{Lam2}.
\medskip

We use the standard model for the root lattice of type $A_\ell$. In particular,
\[
A_\ell=\{ \sum a_i\epsilon_i\in \Z^{\ell+1}\mid a_i\in \Z \text{ and } \sum_{i=1}^{\ell+1}a_i=0\},
\]
where $\epsilon_i$ is the row vector whose $i$-th entry is $1$ and all other entries are $0$. The dual lattice
\[
A_\ell^* =\bigcup_{i=0}^\ell \left ( \gamma_{A_\ell}(i)+A_\ell \right),
\]
where $ \gamma_{A_\ell}(i)= \frac{1}{\ell+1} \left( \sum_{j=1}^{\ell+1-i} i \epsilon_j - \sum_{j=\ell+1-i+1}^{\ell+1}
(\ell+1-i)\epsilon_j\right)$ for $i=0, \dots, \ell$.

\begin{nota}\label{nota3:1}
Let $n$ and $k$ be positive integers. We shall consider two injective maps   $\eta_i: \Z^{n+1}  \to \Z^{(n+1)(k+1)}$ and $\iota_i:
\Z^{k+1} \to \Z^{(n+1)(k+1)}$ defined by
\[
\eta_i(\epsilon_j) = \epsilon_{(n+1)(i-1) +j} \quad \text{ and } \quad  \iota_i(\epsilon_j)= \epsilon_{(n+1)(j-1) +i},
\]
for $i=1, \dots, k+1, j=1, \dots, n+1$.

Let $d_{k+1}= \sum_{j=1}^{k+1} \eta_j: \Z^{n+1}  \to \Z^{(n+1)(k+1)}$ and $\mu_{n+1}= \sum_{j=1}^{n+1}\iota_{j}: \Z^{k+1} \to
\Z^{(n+1)(k+1)}$. Then we have
\[
d_{k+1}(a_1, \dots, a_{n+1}) = (a_1, \dots, a_{n+1}, a_1, \dots, a_{n+1}, \dots, a_1, \dots, a_{n+1}),
\]
and
\[
\mu_{n+1}(a_1, \dots, a_{k+1}) = (a_1, \dots, a_1,a_2, \dots, a_2, \dots, a_{k+1},\dots, a_{k+1}).
\]
\end{nota}

Set $X=d_{k+1}(A_n)$ and $Y=\mu_{n+1}(A_k)$. Then $X\cong \sqrt{k+1}A_n$ and $Y\cong \sqrt{n+1}A_k$. Moreover, we have
\begin{equation}\label{ann}
\begin{split}
Ann_{A_{(n+1)(k+1)-1}}(Y) = \oplus_{i=1}^{k+1} \eta_i(A_n) \cong A_n^{k+1},\\
Ann_{A_{(n+1)(k+1)-1}}(X) = \oplus_{j=1}^{n+1}\iota_j(A_k) \cong A_k^{n+1},
\end{split}
\end{equation}
where $Ann_{A}(B)=\{ x\in A\mid \la x,y\ra =0 \text{ for all } y\in B\}$ is the annihilator of a sublattice $B$ in an integral lattice $A$.

\medskip

By the same construction as in Notation \ref{hef} (see also \cite[Chapter 13]{DL}), one can obtain subVOAs isomorphic to $L_{\hat{sl}_{n+1}(\C)}(k+1,0)$ and $L_{\hat{sl}_{k+1}(\C)}(n+1,0)$ in the lattice VOA $V_{A_{(k+1)(n+1)-1}}$.

The next proposition is well known in the literature \cite{KW,Lam2,NT}.
\begin{prop}\label{decL}
The VOA $L_{\hat{sl}_{n+1}(\C)}(k+1,0)$ and $L_{\hat{sl}_{k+1}(\C)}(n+1,0)$  are mutually commutative in the lattice VOA
$V_{A_{(k+1)(n+1)-1}}$. Moreover,
$L_{\hat{sl}_{n+1}(\C)}(k+1,0)\otimes L_{\hat{sl}_{k+1}(\C)}(n+1,0)$ is a full subVOA of  $V_{(n+1)(k+1)-1}$.
\end{prop}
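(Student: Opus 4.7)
The plan is to verify three parts: constructing the two affine VOAs inside $V_L$ with $L=A_{(n+1)(k+1)-1}$, proving they are mutually commutative, and showing that the sum of their Virasoro elements equals $\om_L$. Applying the construction of Notation \ref{hef} to the sublattices $\bigoplus_{i=1}^{k+1}\eta_i(A_n)\cong A_n^{k+1}$ and $\bigoplus_{j=1}^{n+1}\iota_j(A_k)\cong A_k^{n+1}$ of $A_{(n+1)(k+1)-1}$ produces subVOAs isomorphic to $L_{\hat{sl}_{n+1}(\C)}(k+1,0)$ and $L_{\hat{sl}_{k+1}(\C)}(n+1,0)$, generated respectively by $H_\al = d_{k+1}(\al)(-1)\cdot\vac$, $E_\al=\sum_{i=1}^{k+1}e^{\eta_i(\al)}$ for $\al\in A_n(2)$, and by the analogous expressions obtained by swapping the roles of $\eta_i,\iota_j$ and $d_{k+1},\mu_{n+1}$.

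Mutual commutativity amounts to verifying $u_m v=0$ for all $m\ge 0$ whenever $u$ is a generator of the first VOA and $v$ of the second. The Cartan--Cartan, Cartan--root and root--Cartan pieces reduce to the vanishing of the three pairings $\la d_{k+1}(\al),\mu_{n+1}(\be)\ra$, $\la d_{k+1}(\al),\iota_j(\be)\ra$, and $\la\eta_i(\al),\mu_{n+1}(\be)\ra$, each of which is a direct computation using that the coefficient sums $\sum_s a_s$ and $\sum_s b_s$ both vanish for $\al\in A_n$, $\be\in A_k$. The essential case is $(E_\al)_0 E_\be$. Because $\la\eta_i(\al),\iota_j(\be)\ra\in\{-1,0,1\}$ for roots $\al,\be$, the modes $(E_\al)_m E_\be$ with $m\ge 1$ vanish termwise, and only the pairs $(i,j)$ with $\la\eta_i(\al),\iota_j(\be)\ra=-1$ contribute to $(E_\al)_0E_\be$. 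Writing $\al=\epsilon_s-\epsilon_t$ and $\be=\epsilon_{s'}-\epsilon_{t'}$, one finds exactly the two pairs $(i,j)=(t',s)$ and $(s',t)$, and a short calculation gives $\eta_{t'}(\al)+\iota_s(\be)=\eta_{s'}(\al)+\iota_t(\be)$. So both contributions are scalar multiples of the same basis vector $e^\gamma$, and cancellation follows from the cocycle identity $\varepsilon(\gamma_1,\gamma_2)/\varepsilon(\gamma_2,\gamma_1)=(-1)^{\la\gamma_1,\gamma_2\ra}$, which forces the two scalars to differ by a sign.

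For fullness, the Sugawara formula (\ref{cc}) gives the sum of central charges as
\[
\frac{(k+1)((n+1)^2-1)}{n+k+2}+\frac{(n+1)((k+1)^2-1)}{n+k+2}=(n+1)(k+1)-1,
\]
which is the central charge of $V_L$. Since $\Omega_1$ and $\Omega_2$ are commuting Virasoro vectors of $V_L$, the difference $\om_L-\Omega_1-\Omega_2$ is a Virasoro vector of central charge $0$, hence zero, and therefore $\Omega_1+\Omega_2=\om_L$. This gives the full subVOA conclusion. The principal obstacle is the cocycle sign bookkeeping in the $(E_\al)_0E_\be$ cancellation; the remaining pieces are formal consequences of the orthogonality relations built into the embeddings $\eta_i$ and $\iota_j$.
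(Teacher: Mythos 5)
The paper does not actually prove this proposition---it is quoted as ``well known in the literature'' with a pointer to \cite{KW,Lam2,NT}---so you are supplying an argument where the authors supply a citation. Your architecture is the standard one (essentially what is carried out in \cite{DL} and \cite{Lam2}): realize the two affine subVOAs from the orthogonal sublattices $\oplus_i\eta_i(A_n)\cong A_n^{k+1}$ and $\oplus_j\iota_j(A_k)\cong A_k^{n+1}$, check commutativity on the generators, and match central charges. The Cartan--Cartan and Cartan--root pairings, the identification of exactly the two pairs $(i,j)=(t',s)$ and $(s',t)$ with $\la\eta_i(\al),\iota_j(\be)\ra=-1$, the equality $\eta_{t'}(\al)+\iota_s(\be)=\eta_{s'}(\al)+\iota_t(\be)$, and the central charge identity are all correct.

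The gap sits exactly at the point you yourself flag as the principal obstacle. The identity $\varepsilon(\gamma_1,\gamma_2)\varepsilon(\gamma_2,\gamma_1)^{-1}=(-1)^{\la\gamma_1,\gamma_2\ra}$ relates the two orderings of a \emph{single} pair; what you need is a relation between the values of $\varepsilon$ at two \emph{different} pairs, namely $\varepsilon(\eta_{t'}(\al),\iota_s(\be))=-\varepsilon(\eta_{s'}(\al),\iota_t(\be))$, and that does not follow from the quoted identity. It is not even true for every bilinear $2$-cocycle satisfying the constraints of type \eqref{epsilon0}: two such cocycles differ by a symmetric bilinear form $b$ with $b(\gamma,\gamma)=1$, which changes the ratio of your two terms by a factor that can be $-1$. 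Concretely, for $n=k=1$ with simple roots $r_1,r_2,r_3$ of $A_3$, one has $E_\al=e^{r_1}+e^{r_3}$ and $E'_\be=e^{r_1+r_2}+e^{r_2+r_3}$, and $(E_\al)_0E'_\be=(\varepsilon(r_1,r_2+r_3)+\varepsilon(r_3,r_1+r_2))\,e^{r_1+r_2+r_3}$; the standard cocycle gives $1-1=0$, but twisting by the admissible symmetric form with $b(r_1,r_2)=-1$ and all other values $+1$ gives $-1-1\neq 0$. So the cancellation holds only for a suitably normalized cocycle (equivalently, after adjusting the section $\gamma\mapsto e^\gamma$ by signs), and this must be either computed for an explicit standard cocycle or imported from \cite{DL,Lam2}. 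A smaller point: the step ``a Virasoro vector of central charge $0$ is zero'' requires positive definiteness of the invariant form on a real form containing $\om_L-\Omega_1-\Omega_2$ (available here since $L$ is positive definite), or else a direct verification that $\Omega_1+\Omega_2=\om_L$ of the kind the paper performs for $\Omega$ in Section 5.
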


\begin{rem}\label{Andec}
It is also known that the VOA $V_{\mu_{n+1}(A_k)}$ is contained in  the affine VOA $L_{\hat{sl}_{k+1}(\C)}(n+1,0)$ and $ K(sl_{k+1}(\C), n+1) = \Com_{L_{\hat{sl}_{k+1}(\C)}(n+1,0)} (V_{\mu_{n+1}(A_k)})$ (cf. \cite[Lemma 4.1]{Lam2}). Moreover, for any $\Lambda \in P^+_{n+1}(sl_{k+1}(\C))$, we have the decomposition
\begin{equation}\label{decan}
L_{\hat{sl}_{k+1}}(n+1,\Lambda) =\bigoplus_{\lambda\in  \frac{1}{n+1}(\mu_{n+1}(\Lambda+ A_k))/ {\mu_{n+1}(A_k)}}  K_{sl_{k+1}(\C),n+1} (\Lambda, (n+1)\bar{\lambda}) \otimes V_{\lambda+\mu_{n+1}(A_k)}
\end{equation}
as a module of $V_{\mu_{n+1}(A_k)}\otimes K(sl_{k+1}(\C),n+1)$, where $\bar{\lambda}\in \frac{1}{n+1}A_k$ such that $\mu_{n+1}(\bar{\lambda}) =\lambda$ (see \cite[Lemma 4.3]{Lam2}).

Note that it is shown in \cite[Theorem 14.20]{DL} that $K_{sl_{k+1}(\C),n+1} (\Lambda, (n+1)\bar{\lambda})$, for $\Lambda\in P_+^{n+1}(sl_{k+1}(\C))$, $\lambda\in (\mu_{n+1}(A_k))^*$, are irreducible $K(sl_{k+1}(\C),n+1)$-modules.
\end{rem}

\medskip

Next we consider the case when $n=8$ and $k=2$. Then $(n+1)(k+1)-1=26$. We shall study the decomposition of $\tilde{W} =\Com_{V_{E_8^3}}(L_{\hat{sl}_9(\C)}(3,0))$  as a $K(sl_3(\C),9)$-module.

Denote
$$\nu_1=\eta_1-\eta_2,\quad   \quad \nu_2=\eta_2-\eta_3, $$
and define $\mu=\mu_3:\Z^3 \to \Z^{27}$ by
\[
\mu (a_1, a_2, a_3) =(a_1, \dots, a_1, a_2, \dots, a_2,a_3, \dots a_3).
\]
Note that $Y=\mu (A_2)\cong 3A_2$ and $$Ann_{A_{26}}(Y)= \{ \al\in A_{26}\mid \la \al, \be\ra =0 \text{ for any }\be\in
Y\}\cong A_8^3.$$

Next we discuss the coset decomposition $Y+A_8^3$ in $A_{26}$.

\begin{lem}\label{a26}
Let  $\al_1=(1,-1,0)$ and $\al_2=(0,1,-1)$  be roots of $A_2$. Then we have
\[
\begin{split}
A_{26}  &= \bigcup_{0\leq i, j\leq 8}  \left( \left(-\frac{1}9 (i \mu (\al_1)+j\mu(\al_2))+Y\right) +
                  \left(\nu_1(\gamma_{A_8}(i))+\nu_2(\gamma_{A_8}(j))+A_8^3 \right) \right).
\end{split}
\]
\end{lem}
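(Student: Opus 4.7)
The plan is to verify this coset decomposition by combining a direct integrality check, an index count via Gram determinants, and a separation-of-cosets argument based on the ``block-sum modulo $9$'' invariant of an element of $A_{26}$.

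First I would verify that for every $0 \le i,j \le 8$ the proposed representative
\[
r(i,j) := -\tfrac{1}{9}\bigl(i\,\mu(\al_1) + j\,\mu(\al_2)\bigr) + \nu_1(\gamma_{A_8}(i)) + \nu_2(\gamma_{A_8}(j))
\]
actually lies in $A_{26}$. A block-by-block computation using
\[
\gamma_{A_8}(i) = \tfrac{1}{9}\bigl(\underbrace{i, \ldots, i}_{9-i},\, \underbrace{-(9-i), \ldots, -(9-i)}_{i}\bigr)
\]
shows that the fractional parts of $\tfrac{1}{9}\mu(\al_k)$ and of $\gamma_{A_8}(\cdot)$ cancel exactly: the three blocks of nine coordinates of $r(i,j)$ simplify to
\[
(0_{9-i},\, -1_i), \qquad (0_{9-i},\, 1_i) + (0_{9-j},\, -1_j), \qquad (0_{9-j},\, 1_j),
\]
with respective coordinate sums $-i,\ i-j,\ j$ totalling $0$. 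Hence $r(i,j)\in A_{26}$ and each set on the right of the claimed equality is contained in $A_{26}$.

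Next I would compute the index $[A_{26} : Y \oplus A_8^3]$. Because $A_8^3$ is the annihilator of $Y$ in $A_{26}$, the sum is genuinely orthogonal. A direct calculation of the Gram matrix of $Y$ from $|\mu(\al_k)|^2 = 18$ and $\la \mu(\al_1), \mu(\al_2)\ra = -9$ gives $\det Y = 324 - 81 = 243$. Together with $\det A_8^3 = 9^3 = 729$ and $\det A_{26} = 27$, this yields
\[
[A_{26} : Y \oplus A_8^3]^2 \;=\; \frac{\det Y\cdot \det A_8^3}{\det A_{26}} \;=\; \frac{243 \cdot 729}{27} \;=\; 81^{2},
\]
so the index equals $81$, which matches the number of pairs $(i,j)\in\{0,\dots,8\}^2$.

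Finally I would show that the cosets $r(i,j) + (Y \oplus A_8^3)$ are pairwise distinct. For any $v\in A_{26}$ with block sums $(s_1,s_2,s_3)$, elements of $Y$ have all block sums in $9\Z$ while elements of $A_8^3$ have all block sums zero; thus
\[
v + (Y \oplus A_8^3) \;\longmapsto\; (s_1 \bmod 9,\; s_2 \bmod 9,\; s_3 \bmod 9)
\]
is a well-defined map into $\{(a,b,c)\in(\Z/9)^3 : a+b+c\equiv 0\}$, and by the index count it is a bijection. Since $r(i,j)$ has block sums $(-i,\, i-j,\, j)$, distinct pairs $(i,j)\in\{0,\dots,8\}^2$ map to distinct residues; combined with step one this forces the union of the $81$ cosets to exhaust $A_{26}$. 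The only point requiring care is the bookkeeping in step one, ensuring that the nonintegral contributions from $\frac{1}{9}\mu(\al_k)$ and from $\gamma_{A_8}(\cdot)$ cancel cleanly; steps two and three are then formal consequences of the Gram-determinant count and the block-sum invariant.
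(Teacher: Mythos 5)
Your proof is correct and follows essentially the same route as the paper's: both compute the index $[A_{26}:Y\oplus A_8^3]=81$ from the lattice discriminants and then exhibit $81$ pairwise distinct coset representatives built from $\mu(\al_1),\mu(\al_2)$ and the glue vectors $\gamma_{A_8}(i)$. The only difference is that you justify distinctness via the block-sum-mod-$9$ invariant, which is a cleaner and more complete version of the paper's terse assertion that the representatives $\sum_{k}\iota_k(\al_p)$ do not lie in $Y+A_8^3$.
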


\begin{proof}
First we note that $ [A_{26}: Y+A_8^3]= \sqrt{( 9^2\cdot 3) \cdot 9^3/27}=9^2$.  Moreover,  we have
\[
-\frac{1}9 (i \mu (\al_1)+j\mu(\al_2)) +
                  \nu_1(\gamma_{A_8}(i))+\nu_2(\gamma_{A_8}(j)) = \sum_{k=10-i}^9 \iota_k(\alpha_1) +  \sum_{k'=10-i}^9 \iota_{k'}(\alpha_2).
\]
Note that $\sum_{k=10-i}^{9}  \iota_k (\alpha_p) \notin Y+A_8^3$ for any $i\neq 0, p=1,2$. Therefore,
\[
\left(  -\frac{1}9 (i \mu (\al_1)+j\mu(\al_2)) + Y\right ) +
                  \left(\nu_1(\gamma_{A_8}(i))+\nu_2(\gamma_{A_8}(j)) + A_8^3\right),
 \]
for $i,j =0, \dots, 8,$ give $9^2$ distinct cosets in $A_{26}/ (Y+A_8^3)$. Thus, we have the desired conclusion.
\end{proof}

\begin{lem}\label{e8overa8}
Let $\delta=\gamma_{A_8}(3)=\frac{1}3(1^6 -2^3) \in A_8^*$. Then for any $k, \ell =0, \pm 1$, we have \[
\begin{split}
\Com_{V_{(k\nu_1+\ell\nu_2)(\delta)+A_8^3}}(L_{\hat{sl}_9(\C)}(3,0))
&= \{ v\in V_{(k\nu_1+\ell\nu_2)(\delta)+A_8^3}\mid  \Omega_n v=0 \text{ for all } n\geq 0\}\\
&\cong K_{sl_3(\C), 9}(0, -3(k\al_1+\ell\al_2)).
\end{split}
\]
\end{lem}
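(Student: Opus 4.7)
I will embed $V_S$ with $S=(k\nu_1+\ell\nu_2)(\delta)+A_8^3$ into the lattice VOA $V_{A_{26}}\supset V_L=V_{E_8^3}$, and combine the level-rank duality inclusion of Proposition~\ref{decL} with the parafermion decomposition of Remark~\ref{Andec}. A direct calculation shows $\gamma_{A_8}(3)=\delta$ and $\gamma_{A_8}(6)\equiv-\delta\pmod{A_8}$, so choosing $i=3k\bmod 9$ and $j=3\ell\bmod 9$ (both lying in $\{0,3,6\}$) yields $\nu_1(\gamma_{A_8}(i))+\nu_2(\gamma_{A_8}(j))\equiv(k\nu_1+\ell\nu_2)(\delta)\pmod{A_8^3}$. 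Lemma~\ref{a26} then identifies $S$ as pairing with
\[
T_{k,\ell}:=-\tfrac{1}{3}\bigl(k\mu(\al_1)+\ell\mu(\al_2)\bigr)+Y
\]
to form a single coset $c_{k,\ell}=S+T_{k,\ell}$ of $A_8^3+Y$ inside $A_{26}$; since $A_8^3\perp Y$ in $A_{26}$, one has $V_{c_{k,\ell}}=V_S\otimes V_{T_{k,\ell}}$ as a $V_{A_8^3}\otimes V_Y$-module.

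Since $L_{\widehat{sl}_9(\C)}(3,0)\subset V_{A_8^3}$ acts trivially on the $V_{T_{k,\ell}}$-factor,
\[
\Com_{V_{c_{k,\ell}}}\bigl(L_{\widehat{sl}_9(\C)}(3,0)\bigr)=V_{T_{k,\ell}}\otimes\Com_{V_S}\bigl(L_{\widehat{sl}_9(\C)}(3,0)\bigr).
\]
Level-rank duality (Proposition~\ref{decL} combined with the matching central charges $20+6=26=c(V_{A_{26}})$ and the rationality of $V_{A_{26}}$) upgrades the mutual commutativity to the equality $\Com_{V_{A_{26}}}(L_{\widehat{sl}_9(\C)}(3,0))=L_{\widehat{sl}_3(\C)}(9,0)$, so intersecting with the coset gives
\[
\Com_{V_{c_{k,\ell}}}\bigl(L_{\widehat{sl}_9(\C)}(3,0)\bigr)=V_{c_{k,\ell}}\cap L_{\widehat{sl}_3(\C)}(9,0).
\]

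Now I apply the parafermion decomposition of Remark~\ref{Andec} with $\Lambda=0$:
\[
L_{\widehat{sl}_3(\C)}(9,0)=\bigoplus_{\lambda\in\frac{1}{9}\mu(A_2)/\mu(A_2)}K_{sl_3(\C),9}(0,9\bar\lambda)\otimes V_{\lambda+\mu(A_2)}.
\]
By Lemma~\ref{a26}, the $81$ cosets of $A_8^3+Y$ in $A_{26}$ are in bijection with the $81$ distinct $Y$-projections $\lambda+Y$ in $\tfrac{1}{9}\mu(A_2)/\mu(A_2)$, so each $V_{c_{k,\ell}}$ coincides with a single $V_Y$-isotypic component of $V_{A_{26}}$, of type $V_{T_{k,\ell}}$. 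Consequently the intersection $V_{c_{k,\ell}}\cap L_{\widehat{sl}_3(\C)}(9,0)$ picks out exactly the summand with $\lambda+Y=T_{k,\ell}$, namely $\bar\lambda=-\tfrac{1}{3}(k\al_1+\ell\al_2)$ and $9\bar\lambda=-3(k\al_1+\ell\al_2)$; that summand equals $K_{sl_3(\C),9}(0,-3(k\al_1+\ell\al_2))\otimes V_{T_{k,\ell}}$. Cancelling the irreducible $V_Y$-module $V_{T_{k,\ell}}$ from both expressions for the commutant yields the claimed isomorphism $\Com_{V_S}(L_{\widehat{sl}_9(\C)}(3,0))\cong K_{sl_3(\C),9}(0,-3(k\al_1+\ell\al_2))$.

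The main technical obstacle is the matching performed in the third paragraph: one must verify that each $V_{A_8^3+Y}$-coset of $V_{A_{26}}$ is a single $V_Y$-isotypic component. This reduces to the observation, visible in Lemma~\ref{a26}, that the $Y$-projections $-\tfrac{1}{9}(i\mu(\al_1)+j\mu(\al_2))$ for $0\leq i,j\leq 8$ exhaust the $81$ classes of $\tfrac{1}{9}\mu(A_2)/\mu(A_2)$, combined with the fact that $V_Y$-modules attached to distinct cosets are mutually non-isomorphic. A secondary subtlety is the promotion of Proposition~\ref{decL}'s mutual commutativity to an equality of commutants, which follows from the standard level-rank duality count of central charges together with the complete reducibility of $V_{A_{26}}$ over the full subVOA $L_{\widehat{sl}_9(\C)}(3,0)\otimes L_{\widehat{sl}_3(\C)}(9,0)$.
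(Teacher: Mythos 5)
Your proof is correct and follows essentially the same route as the paper's: the coset decomposition of Lemma \ref{a26}, the identification $\Com_{V_{A_{26}}}(L_{\widehat{sl}_9(\C)}(3,0))=L_{\widehat{sl}_3(\C)}(9,0)$ together with the parafermion decomposition \eqref{decan}, and the specialization $i=3k$, $j=3\ell$. You merely make explicit the isotypic-component matching and the tensor-factor cancellation that the paper leaves implicit.
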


\begin{proof}
By Lemma \ref{a26},
\[
V_{A_{26}} =\bigoplus_{0\leq i,j \leq 8}
V_{-\frac{1}9 (i \mu (\al_1)+j\mu(\al_2))+Y} \otimes V_{\nu_1(\gamma_{A_8}(i))+\nu_2(\gamma_{A_8}(j))+A_8^3}.
\]
Moreover, by \eqref{decan},
\[
L_{\hat{sl}_3}(9,0) =\Com_{V_{A_{26}}}(L_{\hat{sl}_9(\C)}(3,0))
=\bigoplus_{\lambda \in \frac{1}9 Y/Y} V_{\lambda+Y} \otimes K_{sl_3(\C), 9}(0, 9\bar{\lambda}).
\]
Take $i=3k$ and $j=3\ell$. Then we have
\[
\begin{split}
\Com_{V_{(k\nu_1+\ell\nu_2)(\delta)+A_8^3}}(L_{\hat{sl}_9(\C)}(3,0))
&\cong K_{sl_3(\C), 9}(0, 9\cdot -\frac{1}9 (3k\al_1 +3\ell \al_2))\\
 & = K_{sl_3(\C), 9}(0, -3(k\al_1+\ell\al_2))
 \end{split}
\]
as desired.
\end{proof}

\begin{lem}
We have the decomposition
\[
\tilde{W}= \Com_{V_{E_8^3}}(L_{\hat{sl}_9(\C)}(3,0)) = \bigoplus_{i,j=0, \pm 1} K_{sl_3(\C), 9}(0, 3(i\al_1+j\al_2)).
\]
\end{lem}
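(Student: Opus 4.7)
The strategy is to decompose $V_{E_8^3}$ into $27$ cosets of $A_8^3$, compute the commutant of $L_{\hat{sl}_9(\C)}(3,0)$ summand-by-summand, and match the non-zero contributions with the modules of the claim via Lemma \ref{e8overa8}.

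First, under the identification $K\cong A_8$, the quotient $E_8/K$ is the unique order-$3$ subgroup of $A_8^*/A_8\cong\Z/9\Z$, generated by $\delta=\gamma_{A_8}(3)$ (note $3\delta\in A_8$). Choosing $b$ with $b\equiv\delta\pmod{A_8}$, this gives
\[
V_{E_8^3}=\bigoplus_{(p,q,r)\in\{0,\pm 1\}^3} V_{(p\delta,q\delta,r\delta)+A_8^3}
\]
as modules over $V_{A_8^3}\supset L_{\hat{sl}_9(\C)}(3,0)$ (the latter embedded via the diagonal $H_\al, E_\al$ with $\al\in K(2)$). Since the commutant is defined by the linear conditions $u_n v=0$ for $u\in L_{\hat{sl}_9(\C)}(3,0)$, $n\geq 0$, which respect this direct sum,
\[
\tilde{W}=\bigoplus_{(p,q,r)}\Com_{V_{(p\delta,q\delta,r\delta)+A_8^3}}\!\left(L_{\hat{sl}_9(\C)}(3,0)\right).
\]

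Second, I would show that the $18$ cosets with $p+q+r\not\equiv 0\pmod{3}$ contribute zero by a Cartan-weight argument. The Cartan generator $H_\al=(\al,\al,\al)(-1)\vac$ acts on $e^{(x_1,x_2,x_3)}$ as the scalar $\la\al,x_1+x_2+x_3\ra$. As $(x_1,x_2,x_3)$ ranges over the given coset, $x_1+x_2+x_3$ ranges over $(p+q+r)\delta+A_8$, which contains $0$ iff $p+q+r\equiv 0\pmod{3}$ (because $3\delta\in A_8$ while $\delta,2\delta\notin A_8$). Hence the zero-weight subspace under the $sl_9$-Cartan is empty whenever $p+q+r\not\equiv 0\pmod{3}$, and a fortiori the commutant vanishes.

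Third, for each of the remaining $9$ cosets I would exhibit a representative of the form $(k,\ell-k,-\ell)\delta$ with $(k,\ell)\in\{0,\pm 1\}^2$: the seven cases with $p+q+r=0$ are immediate (take $k=p$, $\ell=-r$), while $(1,1,1)\delta\equiv(1,-2,1)\delta\pmod{A_8^3}$ via $(0,3\delta,0)\in A_8^3$ gives $(k,\ell)=(1,-1)$, and $(-1,-1,-1)\delta\equiv(-1,2,-1)\delta\pmod{A_8^3}$ gives $(k,\ell)=(-1,1)$. This yields a bijection with $\{0,\pm 1\}^2$, and under the embedding $\eta_1\oplus\eta_2\oplus\eta_3$ the module $V_{(p\delta,q\delta,r\delta)+A_8^3}$ coincides with $V_{(k\nu_1+\ell\nu_2)(\delta)+A_8^3}$ of Lemma \ref{e8overa8}. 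Applying the lemma and assembling (with the relabeling $(i,j)=(-k,-\ell)$, which preserves the index set $\{0,\pm 1\}^2$) produces the asserted decomposition.

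The main obstacle will be verifying that Lemma \ref{e8overa8}'s commutant computation (carried out via the embedding into $V_{A_{26}}$) applies to the corresponding $V_{A_8^3}$-coset inside $V_{E_8^3}$; this rests on the fact that the commutant of $L_{\hat{sl}_9(\C)}(3,0)$ inside a $V_{A_8^3}$-module depends only on the abstract module, together with the concrete coset identifications $(1,1,1)\delta\equiv(1,-2,1)\delta$ and $(-1,-1,-1)\delta\equiv(-1,2,-1)\delta\pmod{A_8^3}$ arising from $3\delta\in A_8$.
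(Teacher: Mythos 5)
Your argument is correct, and it runs on the same engine as the paper's proof: reduce to the nine cosets of $A_8^3$ appearing in Lemma \ref{e8overa8} and apply that lemma to each. The organization differs slightly. The paper first invokes the containment $\tilde W\subset V_{M+N}$ (already available from the proof that $W_1=0$, via $h(-1)\vac\in L_{\widehat{sl}_9(\C)}(3,0)$ for $h\in E$) and then decomposes $M+N$ into the nine cosets of $A_2\otimes A_8$, whereas you decompose $V_{E_8^3}$ into all twenty-seven cosets of $A_8^3$ and eliminate the eighteen with $p+q+r\not\equiv 0\pmod 3$ by the $(H_\al)_0$-eigenvalue argument; since $K(2)$ spans $E_8\otimes\C$, that zero-mode condition forces $x_1+x_2+x_3=0$, which is exactly the zero-mode shadow of the Heisenberg condition the paper uses, so the two reductions are equivalent. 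Your write-up has the merit of making explicit the combinatorial matching of the surviving cosets with the index set $\{0,\pm1\}^2$, including the cases $(\pm1,\pm1,\pm1)$ handled via $3\delta\in A_8$ — a step the paper compresses into ``the conclusion now follows from Lemma \ref{e8overa8}.'' The issue you flag at the end, that Lemma \ref{e8overa8} is proved inside $V_{A_{26}}$ but is being applied to coset modules sitting inside $V_{E_8^3}$, is indeed the one point requiring care, and your resolution — the commutant depends only on the isomorphism class of the coset module over $V_{A_8^3}$, and the relevant cosets literally coincide — is correct.
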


\begin{proof}
First we note that $M+N \cong A_2\otimes E_8$ and
\[
M+N= \bigcup_{0\leq k,\ell \leq 2} \left( (k\nu_1+\ell\nu_2)(\delta)+A_2\otimes A_8\right).
\]
Since $A_2\otimes A_8 \cong Ann_{A_8^3}(d_3(A_8))$ and $V_{d_3(A_8)}\subset L_{\hat{sl}_9(\C)}(3,0)$, we have
\[
\tilde{W}= \Com_{V_{E_8^3}}(L_{\hat{sl}_9(\C)}(3,0)) < V_{M+N}.
\]
The conclusion now follows from Lemma \ref{e8overa8}.
\end{proof}

\medskip
Now let $\al\in A_2$ be a root. Then $\Z\al\cong A_1$ and
\[
L(\al)=\oplus_{j=1}^9 \iota_j(\Z\al) \cong A_1^9 \subset A_{26}.
\]
Let $H_\al$ and $E_\al$ be defined as in Notation \ref{hef}. Then $\{H_\al, E_{\al}, -E_{-\al}\}$ forms a $sl_2$-triple in the lattice VOA $V_{A_1^9}< V_{A_{26}}$. Moreover, it generates a subVOA
$\mathcal{L}_\al$ isomorphic to the affine VOA $L_{\hat{sl}_2(\C)}(9,0)$. Let $M_\al(9,0)$ be the subVOA generated by $H_\al$. Then
\[
\mathcal{K}_\al:= \Com_{\mathcal{L}_\al}(M_\al(9,0)) \cong K(sl_2(\C), 9).
\]
Note also that $\mathcal{K}_\al= \Com_{\mathcal{L}_\al}(M_\al(9,0)) < \Com_{L_{\hat{sl}_3(\C)}(9,0)}( V_{\mu(A_2)})=K({sl}_3(\C),9).$

Set $h_\al=H_\al$, $x_\al=E_\al$ and $x_{-\al}= -E_{-\al}$. Then the elements $\om_\al$ and $W_\al^3$ defined in Notation \ref{Pal} are contained in $\mathcal{K}_\al$. In fact, $\mathcal{K}_\al$
is generated by $\om_\al$ and $W_\al^3$  (see \cite{DLY}).

\begin{thm}
The VOA $W$ defined in Notation \ref{eijGW} contains a full subVOA isomorphic to $K(sl_3(\C), 9)$.
\end{thm}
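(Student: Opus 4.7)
The plan is to apply Theorem \ref{dwtheorem} of Dong-Wang to reduce the problem. By that theorem, $K(sl_3(\C),9)$ is generated by the three subVOAs $P_{\al_1}, P_{\al_2}, P_{\al_1+\al_2}$, each isomorphic to $K(sl_2(\C),9)$ and generated in turn by the pair $(\om_\al, W^3_\al)$ from Notation \ref{Pal}. Hence it suffices to show that $\om_\al, W^3_\al \in W$ for each positive root $\al \in \Delta_+(A_2)$; since $W$ is a VOA, this will give $P_\al \subset W$ for each $\al$, and therefore $K(sl_3(\C),9) \subset W$.

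A key structural observation is that both $\om_\al$ and $W^3_\al$ already lie in $K(sl_3(\C),9) = K_{sl_3,9}(0,0)$, the trivial summand in the decomposition $\tilde{W} = \bigoplus_{i,j=0,\pm 1} K_{sl_3,9}(0, 3(i\al_1+j\al_2))$ obtained earlier in this section. This trivial summand coincides with the $H$-invariant subspace of $\tilde{W}$ (for the natural $H = \la g, h\ra \cong 3^2$-action inherited from $V_L$), so $\om_\al$ and $W^3_\al$ are necessarily $H$-invariant vectors. Since $W$ is $H$-stable, the task becomes realizing these vectors as $H$-invariant combinations of elements built from the nine Ising vectors $\{e^{i,j}\}$ generating $W$.

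To do this for $\om_\al$, one expands the defining formula in Notation \ref{Pal} with $H_\al = \mu(\al)(-1)\mathbbm{1}$ and $E_{\pm\al} = \sum_{j=1}^{9} e^{\iota_j(\pm\al)}$, obtaining a combination of Heisenberg modes in $\mu(\al)$ together with lattice vertex operators $e^{\pm(\iota_j(\al) - \iota_k(\al))}$ at weight $2$. Separately, one constructs the natural $H$-invariant weight-2 vectors in $W$ available from $\{e^{i,j}\}$: the Virasoro element $\w$ itself, together with $H$-symmetrizations of the central-charge-$\frac{16}{11}$ Virasoro elements $\frac{32}{33}(e^{i,j}+e^{i',j'}+e^{i'',j''})$ of the $U_{3C}$-subVOAs attached to each of the four order-$3$ subgroups of $H$. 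A direct computation in the lattice VOA $V_L$ identifies $\om_\al$ with such an $H$-invariant combination, giving $\om_\al \in W$. The same template, carried out at weight $3$, handles $W^3_\al$.

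The main obstacle lies in these explicit matching computations: the expansion of $\om_\al$ and $W^3_\al$ in Heisenberg and lattice vertex-operator data must be reconciled with explicit $H$-invariant combinations built from the $\{e^{i,j}\}$. The crucial structural input enabling this identification is the compatibility between the root-system structure of $A_2$ (encoded in $\mu$ and the $\iota_j$) and the $3C$-orbit structure of $H \cong 3^2$ on the nine Ising vectors inside $V_L = V_{E_8^3}$.
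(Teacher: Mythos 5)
Your overall skeleton is the right one and coincides with the paper's: by Theorem \ref{dwtheorem} it suffices to put the generators $\om_{\al}$, $W^3_{\al}$ of the three subalgebras $P_{\al}\cong K(sl_2(\C),9)$, $\al\in\Delta_+(A_2)$, inside $W$. But the route you propose for that step contains a genuine error. You assert that $\om_{\al}$ and $W^3_{\al}$ are $H$-invariant (identifying the summand $K_{sl_3(\C),9}(0,0)$ with the $H$-fixed subspace of $\tilde{W}$) and then plan to match them against $H$-invariant combinations of the nine Ising vectors. Neither half of this works. First, $h=\tau_e\tau_f$ acts on $K(sl_3(\C),9)$ by cyclically permuting the three $A_8$-factors, hence permuting $\al_1,\al_2,\al_3$ and therefore the vectors $\om_{\al_1},\om_{\al_2},\om_{\al_3}$: the trivial summand is $H$-stable but not pointwise fixed, so $\om_{\al}$ is not $H$-invariant. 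Second, $H\cong 3^2$ acts simply transitively on the nine vectors $e^{i,j}=g^ih^je$, so the only $H$-invariant vector in their span is a multiple of $\w=\frac{8}{9}\sum e^{i,j}$; in particular the $H$-symmetrization of any of the $\frac{16}{11}$-Virasoro vectors $\frac{32}{33}(e^{i,j}+e^{i',j'}+e^{i'',j''})$ is again a multiple of $\w$, which has central charge $4$ and cannot equal the central-charge-$\frac{16}{11}$ vector $\om_{\al}$. So the ``explicit matching computation'' you defer to has no solution in the form you describe.

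The correct identification is simpler and requires no averaging over $H$: each $P_{\al_i}$ already sits inside a single dihedral $3C$-subalgebra generated by just two of the Ising vectors. This is what the paper does, quoting the computations of \cite{LYY}: $\mathcal{K}_{\al_1}<\VOA(e_M,\rho e_M)$, $\mathcal{K}_{\al_2}<\VOA(e_N,\rho e_N)$, $\mathcal{K}_{\al_3}<\VOA(e_{\tilde{N}},\rho e_{\tilde{N}})$. For instance $\om_{\al_1}=\frac{32}{33}(e^{0,0}+e^{1,0}+e^{2,0})$ is exactly the conformal vector of the $3C$-algebra $\VOA(e^{0,0},e^{1,0})$ given by Lemma \ref{eiej}(4), and $W^3_{\al_1}$ lies in its weight-$3$ subspace. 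Dong--Wang then gives $K(sl_3(\C),9)\subset W$. Finally, you never address fullness: one must also observe that $K(sl_3(\C),9)$ has central charge $\frac{9\cdot 8}{9+3}-2=4$, equal to the central charge of $\w$ computed in Lemma \ref{w}, so the embedded copy is a full subVOA of $W$.
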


\begin{proof}
Recall that $W= VOA( e^{i,j}\mid  0\leq i,j\leq 2)$. We also have
\[
M=(\eta_1-\eta_2)(E_8) ,\quad  N= (\eta_2-\eta_3)(E_8),\quad    \tilde{N}=(\eta_1-\eta_3)(E_8).
\]

Let $\al_1=(1,-1,0)$, $\al_2=(0,1,-1)$ and $\al_3= \al_1+\al_2=(1,0,-1)$ be the positive roots of $A_2$. Then by the same calculations as in \cite{LYY}, it is straightforward to   verify that
\[
\mathcal{K}_{\al_1} < VOA(e_M, \rho e_M),\quad  \mathcal{K}_{\al_2} < VOA(e_N, \rho e_N),\quad  \mathcal{K}_{\al_3} < VOA(e_{\tilde{N}}, \rho e_{\tilde{N}}),
\]
where $e_M, e_N, e_{\tilde{N}}$ and $\rho$ are defined as in Notation \ref{efg}.

By Theorem \ref{dwtheorem}, $\mathcal{K}_{\al_1}, \mathcal{K}_{\al_2}$ and $\mathcal{K}_{\al_3}$ generates a subVOA isomorphic to $K(sl_3(\C), 9)$ in $W$. It is a full subVOA of $W$ because they have the same central charge.
\end{proof}

\begin{thm}
 We have $W=\tilde{W}= \Com_{V_{E_8^3}}(L_{\hat{sl}_9(\C)}(3,0))$.
\end{thm}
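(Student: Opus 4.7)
The plan rests on three observations. First, by Lemma~\ref{coma8} we have $W\subseteq\tilde W$, and by the previous theorem $W$ contains $K(sl_3(\C),9)$ as a full subVOA; in particular, $W$ is a $K(sl_3(\C),9)$-submodule of $\tilde W$. Second, the previous lemma shows $\tilde W=\bigoplus_{(i,j)\in\{0,\pm 1\}^2}S_{(i,j)}$ where $S_{(i,j)}:=K_{sl_3(\C),9}(0,3(i\al_1+j\al_2))$ is \emph{irreducible} (by \cite[Theorem~14.20]{DL}, as recalled in Remark~\ref{Andec}). Third, since both $W$ and $S_{(i,j)}$ are $K(sl_3(\C),9)$-submodules of $\tilde W$, the intersection $W\cap S_{(i,j)}$ is a $K(sl_3(\C),9)$-submodule of the irreducible $S_{(i,j)}$, forcing $W\cap S_{(i,j)}\in\{0,S_{(i,j)}\}$. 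Hence it suffices to show $W\cap S_{(i,j)}\neq 0$ for each of the nine pairs $(i,j)$.

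Under the inclusion $\tilde W\subseteq V_{M+N}$, the summand $S_{(i,j)}$ coincides with $\tilde W\cap V_{(i\nu_1+j\nu_2)(\delta)+A_2\otimes A_8}$, so one needs to produce an element of $W$ supported entirely in the $(i,j)$-coset of $A_2\otimes A_8$ in $M+N$. For $(0,0)$, the subVOA $K(sl_3(\C),9)\subseteq W$ itself \emph{is} $S_{(0,0)}$. For the six cosets $(\pm 1,0),(0,\pm 1),(\pm 1,\pm 1)$ one uses $\rho$-eigencomponents: because $\rho$ scales $e^{(\be,-\be,0)}$ by $\xi^{2\la{\bf a},\be\ra}$ and $E_8=A_8\cup(b+A_8)\cup(-b+A_8)$, the vectors $X^1,X^2\in W$ defined in Section~5 are supported purely in cosets $(1,0)$ and $(-1,0)$ respectively, and are non-zero by the explicit formulas given there. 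The analogous construction applied to $e_N$ and $e_{\tilde N}$ produces vectors $Y^1,Y^2\in W$ supported purely in cosets $(0,\pm 1)$ and $Z^1,Z^2\in W$ supported purely in cosets $(\pm 1,\pm 1)$, handling seven of the nine cosets.

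The remaining cosets $(1,-1)=(1,0)+(0,-1)$ and $(-1,1)=(-1,0)+(0,1)$ are reached by vertex operator products: since $W$ is closed under its own VOA operations, $(X^1)_n Y^2$ and $(X^2)_n Y^1$ lie in $W$ for every $n\in\Z$, and the Heisenberg grading of $V_{M+N}$ forces them to be supported purely in cosets $(1,-1)$ and $(-1,1)$ respectively. The main obstacle is verifying non-vanishing, since $X^1$ and $Y^2$ are each sums of many exponentials in distinct $E_8$-cosets and a priori the product might globally cancel. This is ruled out by the standard lattice VOA expansion
\[
Y(e^\al,z)e^\be=\varepsilon(\al,\be)\,z^{\la\al,\be\ra}\bigl(e^{\al+\be}+O(z)\bigr)
\]
applied termwise: at the lowest degree in $n$ at which leading terms appear, $(X^1)_n Y^2$ evaluates to a non-trivial sum of distinct lattice basis vectors $e^{\al+\be}$ in coset $(1,-1)$ with non-zero cocycle coefficients, and distinct lattice vectors are linearly independent in $V_{M+N}$. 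Thus $W\cap S_{(i,j)}\neq 0$ for every $(i,j)\in\{0,\pm 1\}^2$, so $S_{(i,j)}\subseteq W$ for all nine pairs, and $W=\tilde W$.
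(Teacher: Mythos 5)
Your proposal is correct and follows essentially the same route as the paper: reduce to the nine-fold decomposition of $\tilde W$ into irreducible $K(sl_3(\C),9)$-modules, hit seven of the summands with the $\rho$-eigencomponents of the Ising vectors $e_M, e_N, e_{\tilde N}$ (the paper's $X_{\nu}^{\pm}$), and reach the remaining two cosets $\pm(\nu_1-\nu_2)(\delta)$ via a vertex operator product such as $(X_{\nu_1}^+)_{-3}(X_{\nu_2}^-)$. Your non-vanishing argument for that product is sketched at about the same level of detail as the paper's (which simply asserts it is clear), so no substantive difference remains.
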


\begin{proof}
By the previous lemma, the subVOA $W$ contains
$K(sl_3(\C),9)$ as a full subVOA.

Therefore, it suffices to show that $K_{sl_3(\C), 9}(0, 3(i\al_1+j\al_2))$ is contained in $W$ for
any $i,j =0, \pm 1$.

By \cite[Proposition 2.2]{LYY},
\[
X_{\nu_1}^+ =\frac{1}{32}\sum_{\gamma\in \nu_1(\delta) + \nu_1(A_8)\atop \la \gamma, \gamma \ra=4} e^\gamma  \qquad \text{ and } \qquad X_{\nu_1}^- =\frac{1}{32}\sum_{\gamma\in -\nu_1(\delta)+\nu_1(A_8)\atop \la \gamma, \gamma \ra=4} e^\gamma
\]
are contained in $VOA(e_M, \rho e_M)< W$.
Moreover, it is straightforward to verify that
\[
X_{\nu_1}^+ \in \Com_{V_{\nu_1(\delta)+A_8^3}}(L_{\hat{sl}_9(\C)}(3,0)) \cong   K_{sl_3(\C), 9}(0, -3\al_1)
\]
and
\[
X_{\nu_1}^- \in \Com_{V_{-\nu_1(\delta)+A_8^3}}(L_{\hat{sl}_9(\C)}(3,0)) \cong  K_{sl_3(\C), 9}(0, 3\al_1).
\]
Therefore, $W$ contains $K_{sl_3(\C), 9}(0, \pm 3\al_1)$ as  $K(sl_3(\C),9)$-submodules.
Similarly, $W$ also contains $K_{sl_3(\C), 9}(0, \pm 3\al_2)$ and $ K_{sl_3(\C), 9}(0, \pm 3(\al_1+\al_2))$
as $K(sl_3(\C),9)$-submodules.

Moreover, it is clear that $0 \neq (X_{\nu_1}^+)_{-3} (X_{\nu_2}^-) \in  V_{(\nu_1-\nu_2)(\delta)+A_8^3} $.  Since $X_{\nu_1}^+$ and $X_{\nu_2}^-$ are contained in the commutant of $L_{\hat{sl}_9(\C)}(3,0)$,  we have $(X_{\nu_1}^+)_{-3} (X_{\nu_2}^-) \in
\Com_{V_{(\nu_1-\nu_2)(\delta)+A_8^3}}(L_{\hat{sl}_9(\C)}(3,0))$.
Hence $W$ contains $K_{sl_3(\C), 9}(0, 3(\al_1-\al_2))$. Similarly, $K_{sl_3(\C), 9}(0, 3(\al_2-\al_1))$ is contained in $W$, also.
\end{proof}

\medskip


\begin{thebibliography}{IPSS}

\bibitem[ATLAS]{ATLAS}
  J.H. Conway, R.T. Curtis, S.P. Norton, R.A. Parker and  R.A. Wilson,
  ATLAS of finite groups. Clarendon Press, Oxford, 1985.

\bibitem[DL]{DL}
 C. Dong and J. Lepowsky, Generalized vertex algebras and relative
  vertex operators, Progress in Math. {\bf 112}, Birkh\"{a}user,
  Boston, 1993.

\bibitem[DLMN]{DLMN}
  C. Dong, H. Li, G. Mason and S.P. Norton, Associative
  subalgebras of Griess algebra and related topics.
  Proc. of the Conference on the Monster and Lie algebra at the Ohio State
  University, May 1996, ed. by J. Ferrar and K. Harada, Walter de
  Gruyter, Berlin - New York, 1998.




\bibitem[DLWY]{DLWY}
C. Dong, C.H. Lam, Q. Wang and H. Yamada,
The structure of parafermion vertex operator algebras,
\textit{J. Algebra} \textbf{323} (2010), 371--381.


\bibitem[DLY]{DLY}
C. Dong, C.H. Lam and H. Yamada,
$W$-algebras related to parafermion algebras,
\textit{J. Algebra} \textbf{322} (2009), 2366--2403.



\bibitem[DMZ]{dmz} C. Y. Dong, G. Mason, and Y. Zhu, Discrete series
of the Virasoro algebra and the Moonshine module, \emph{Proceedings
of Symposia in Pure Mathematics }\textbf{56}\emph{ Part 2} (1994)
295-316.


\bibitem[DW]{DW}
C. Dong and Q. Wang,
The structure of parafermion vertex operator algebras: general case,
\textit{Commun. Math. Phys.} \textbf{299} (2010), 783--792.


\bibitem[FHL]{FHL} I. Frenkel, Y. Z. Huang, and J. Lepowsky, On axiomatic
approaches to vertex operator algebras and modules, \emph{Memoirs
of the American Mathematical Society} \textbf{104}, 1993.

\bibitem[FLM]{FLM} I. Frenkel, J. Lepowsky, and A. Meurman, Vertex
operator algebras and the Monster, \emph{Academic Press, New York},
1988.

\bibitem[FZ]{FZ} Igor B. Frenkel, and Y. C. Zhu, Vertex operator
algebras associated to representations of affine and Virasoro algebras,
\emph{Duke Mathematical Journal} \textbf{66} (1992) 123-168.

\bibitem[G]{G}
  R.L. Griess, The friendly giant, {\it Invent. Math.} {\bf 69}
  (1982), 1--102.

\bibitem[GL]{GL}
  R.L. Griess and C. H. Lam, $EE_8$ lattices and dihedral groups,
  Pure and Applied Math Quarterly (special issue for Jacques Tits),
  {\bf 7} (2011), no. 3, 621-743. {\tt arXiv:0806.2753}.


\bibitem[H\"o]{ho} G. H\"{o}hn, The group of symmetries of the shorter moonshine module.
  {\it Abhandlungen aus dem Mathematischen Seminar der Universit\"at Hamburg}
 {\bf 80} (2010), 275--283, {\tt arXiv:math/0210076}.

 \bibitem[Iv]{Iva} A. A. Ivanov, \emph{The Monster group and Majorana involutions}, Cambridge Tracts in Mathematics 176, Cambridge University Press, Cambridge, 2009. xiv+252 pp.


\bibitem[Iv2]{Iva2} A. A. Ivanov, Majorana representation of $A_6$ involving $3C$-algebras, \emph{ Bull. Math. Sci.} \textbf{1} (2011), no. 2, 365-378.

\bibitem[Iv3]{Iva3} A. A. Ivanov, On Majorana representations of $A_6$ and $A_7$, \emph{Comm. Math. Phys.} \textbf{307} (2011), no. 1, 1-16.


\bibitem[IS]{IS}  A.A. Ivanov and A. Seress, Majorana representations of $A_5$, \emph{Math. Z.}  \textbf{272} (2012), no. 1-2, 269-295.

\bibitem[IPSS]{IPSS} A. A. Ivanov, D. V. Pasechnik, A. Seress, and
S. Shpectorov, Majorana representations of the symmetric group of
degree 4, \emph{Journal of Algebra} \textbf{324} (2010) 2432-2463.



\bibitem[KW]{KW} V. Kac, M. Wakimoto, Modular and conformal invariance constraints in representation theory of affine
algebras, Advances in Math. \textbf{70}(1988), 156-234.



\bibitem[La]{Lam2} C.H. Lam, A level-rank duality for parafermion vertex operator
algebras of type A, to appear in Proc. Amer. Math. Soc.


\bibitem[LYY1]{LYY} C. H. Lam, H. Yamada, H. Yamauchi, Vertex operator
algebras, extended $E_{8}$ diagram, and McKay's observation on the
Monster simple group, \emph{Transactions of the American Mathematical
Society }\textbf{359} (2007) 4107-4123.

\bibitem[LYY2]{LYY2}
  C.H. Lam, H. Yamada and H. Yamauchi, McKay's observation and vertex
  operator algebras generated by two conformal vectors of central
  charge 1/2. {\it Internat. Math. Res. Papers} {\bf 3} (2005), 117--181.


\bibitem[Mi]{miy} M. Miyamoto, Griess algebras and conformal vectors
in vertex operator algebras, \emph{Journal of Algebra }\textbf{179
}(1996) 523-548.

\bibitem[Mi2]{M} M. Miyamoto, A new construction of the moonshine
vertex operator algebra over the real number field, \emph{Annals of
Mathematics} \textbf{159} (2004) 535-596.

\bibitem[NT]{NT} T. Nakanishi and A. Tsuchiya, Level-rank duality of WZW models in conformal field theory, \emph{Commun. Math. Phys.} \textbf{144} (1992), 351 -372.


\bibitem[Sa]{Sa} S. Sakuma, 6-transposition property of $\tau$-involutions
of vertex operator algebras, \emph{International Mathematics Research
Notices} \textbf{2007}, Article ID rnm 030, 19 pages.

\end{thebibliography}
\end{document}